\algrenewcommand\ALG@beginalgorithmic{\small}
\tikzstyle{vertex}=[circle,minimum size=15pt,inner sep=0pt, draw=black]
\pgfplotsset{compat=newest}
\pgfplotsset{
tick label style={font=\small},
label style={font=\small},
legend style={font=\footnotesize}
}
\newtheorem{de}{Definition} 
\newtheorem{nota}[de]{Notation}
\newtheorem{lemma}[de]{Lemma} 
\newtheorem{theo}[de]{Theorem} 
\newtheorem{cor}[de]{Corollary} 
\newtheorem{remark}[de]{Remark} 
\newtheorem{example}[de]{Example}
\newcommand{\R}{\mathbb{R}}
\newcommand{\Z}{\mathbb{Z}}
\newcommand{\N}{\mathbb{N}}
\newcommand{\cU}{\mathcal{U}}
\newcommand{\cSP}{\mathcal{P}}
\newcommand{\cMSP}{\mathcal{MP}}
\newcommand{\cGMSP}{\mathcal{GMP}}
\newcommand{\X}{\mathcal{X}}
\newcommand{\cX}{\mathcal{X}}
\newcommand{\cost}{c} 
\newcommand{\ce}[1]{\cost_{#1}} 
\newcommand{\cex}[2]{\cost_{#1}^{#2}} 
\newcommand{\cexi}[3]{\cost_{#1,#3}^{#2}} 
\newcommand{\cei}[2]{\cost_{#1,#2}} 
\newcommand{\nomei}[2]{\hat{\cost}_{#1,#2}} 
\newcommand{\nome}[1]{\hat{\cost}_{#1}} 
\newcommand{\nom}{\hat{\cost}} 
\newcommand{\kmax}[1]{#1\text{-}\max \ } 
\newcommand{\kmaxs}[2]{#1\text{-}\max_#2\ } 
\newcommand{\zBS}{g} 
\newcommand{\ZBS}{OPT} 
\newcommand{\zRC}{z^{\text{R}}} 
\newcommand{\zLA}{z^{\text{L}}} 
\newcommand{\DSA}{DSA}
\newcommand{\La}{Bottleneck approach}
\newcommand{\la}{bottleneck approach}
\newcommand{\Lab}{LSA}
\newcommand{\idelta}{\bar{\delta}}
\begin{document}  

\title{Multi-objective minmax robust combinatorial optimization with cardinality-constrained uncertainty}

\author[a]{Andrea Raith}
\author[b]{Marie Schmidt}
\author[c]{Anita Sch\"obel}
\author[c,*]{Lisa Thom}

\affil[a]{Department of Engineering Science, The University of Auckland, postal address: Private Bag 92019, Auckland 1142, New Zealand, email address: a.raith@auckland.ac.nz}
\affil[b]{Department of Technology and Operations Management, Rotterdam School of Management, Erasmus University Rotterdam, postal address: PO Box 1738, 3000 DR Rotterdam, The Netherlands, email address: schmidt2@rsm.nl}
\affil[c]{Institut f\"ur Numerische und Angewandte Mathematik, Georg-August-Universit\"at G\"ottingen, postal address: Lotzestr. 16-18, 37083 G\"ottingen, Germany, email addresses: schoebel@math.uni-goettingen.de (Anita Sch\"obel), l.thom@math.uni-goettingen.de (Lisa Thom)}
\affil[*]{Corresponding author, email address: l.thom@math.uni-goettingen.de}

\renewcommand\Affilfont{\raggedright \small}

\date{}

 \maketitle

  \abstract{  
 In this paper we develop two approaches to find minmax robust efficient solutions for multi-objective combinatorial optimization problems with cardinality-constrained uncertainty. First, we extend an algorithm of Bertsimas and Sim (2003) for the single-objective problem to multi-objective optimization. We propose also an enhancement to accelerate the algorithm, even for the single-objective case, and we develop a faster version for special multi-objective instances. Second, we introduce a deterministic multi-objective problem with sum and bottleneck functions, which provides a superset of the robust efficient solutions. Based on this, we develop a label setting algorithm to solve the multi-objective uncertain shortest path problem. We compare both approaches on instances of the multi-objective uncertain shortest path problem originating from hazardous material transportation. \\\\
 \textbf{Keywords:} Multiple objective programming; Robust optimization; Combinatorial optimization; Multi-objective robust optimization; Shortest path problem
}

\section{Multi-objective robust combinatorial optimization }\label{section_intro}

\subsection{Introduction}
Two of the main difficulties in applying optimization techniques to real-world problems are that several (conflicting) objectives may exist and that parameters may not be known exactly in advance. In multi-objective optimization several objectives are optimized simultaneously by choosing solutions that cannot be improved in one objective without worsening it in another objective. 
Robust optimization hedges against (all) possible parameter values, e.g., by assuming the worst case for each solution (minmax robustness). \\ Often it is assumed that the uncertain parameters take any value from a given interval or that discrete scenarios are given. A survey on robust combinatorial optimization with these uncertainty sets is given in \cite{aissi2009min}. Based on the interval case, Bertsimas and Sim propose in \cite{bertsimas2004price} to consider scenarios where only a bounded number of parameters differ from their expected value (cardinality-constrained uncertainty). This leads to less conservative solutions that are of high practical use. In \cite{bertsimas2003robust} an algorithm is provided to find robust solutions for combinatorial optimization problems under this kind of uncertainty. \\
Only recently have robust optimization concepts for multi-objective problems been developed. A first extension of minmax robustness for several objectives was introduced in  \cite{kuroiwa2012robust} and \cite{fliege2014robust}. They consider the uncertainties in the objectives independently of each other. Ehrgott et al. developed another extension of minmax robustness \cite{EIS14}, in which they include the dependencies between the objectives, and which was generalized in \cite{IKKST13}. These concepts have been extensively applied, e.g., in portfolio management \cite{fliege2014robust}, in game theory \cite{Yu2013} and in the wood industry \cite{ide2015application}. An overview on multi-objective robustness, including further robustness concepts, is given in \cite{IdeSchoe13} and \cite{wiecek2016robust}. Newest developments in this field include \cite{Chuong2016} and \cite{KDD2016}. Cardinality constrained uncertainty has been extended to multi-objective optimization in \cite{DKW2012} (only for uncertain constraints) and \cite{Hassanzadeh13} (for uncertain objective functions and constraints).\\
To the best of our knowledge, only Kuhn et al. have developed a solution algorithm for multi-objective uncertain combinatorial optimization problems \cite{robipa}. They consider problems with two objectives, of which only one is uncertain, with discrete and polyhedral uncertainty sets.
\\
 In this paper, however, we consider problems with arbitrarily many objectives of which all may be uncertain. The main contributions of this paper are that we develop two solution approaches for multi-objective combinatorial optimization problems with cardinality-constrained uncertainty and derive specific algorithms for the multi-objective uncertain shortest path problem.\\
 The remainder of this paper is structured as follows: In Section~\ref{section_intro} we give a short introduction to multi-objective robust optimization. We present two solution approaches for multi-objective combinatorial optimization problems with cardinality-constrained uncertainty in Section~\ref{section_algos}: In Section~\ref{section_DSA} we extend an algorithm from \cite{bertsimas2003robust} to multi-objective optimization and, additionally, propose an acceleration for both the single-objective and the multi-objective case and a faster version for multi-objective problems with a special property. In Section~\ref{section_la} we introduce a second approach and show how it can be applied to solve the multi-objective uncertain shortest path problem as an example. In Section~\ref{section_experiments}, we compare our methods on instances of the multi-objective uncertain shortest path problem originating from hazardous material transportation.

 \subsection{Multi-objective optimization}
First, we will give a short introduction to multi-objective optimization.
\begin{de}
 Given a set $\mathcal{X}$ of feasible solutions and $k$ objective functions $z_1,...,z_k: \mathcal{X}\rightarrow \R$ with $k \geq 2$, we call \[ \min_{x\in \X} z(x) = \begin{pmatrix} 
                                  z_1(x) \\ \vdots \\ z_k(x)
                                 \end{pmatrix}
                                 \]
a \emph{multi-objective optimization problem (MOP)}.
\end{de}
A solution that minimizes all objectives simultaneously does usually not exist. Therefore, we use the concept of \emph{efficient solutions}.
\begin{nota}
 For two vectors $y^1,y^2 \in \R^k$ we use the notation
 \begin{align*}
  y^1 \leq y^2 & \Leftrightarrow y^1_i \leqq y^2_i \text{ for }i = 1,...,k \text{ and } y^1 \neq y^2, \\
  y^1 \leqq y^2 & \Leftrightarrow y^1_i \leqq y^2_i \text{ for }i = 1,...,k.
 \end{align*}
\end{nota}
In the following, we will only use the symbols $<$ (strictly less than) and $\leqq$ (less than or equal to) to compare scalars.
\begin{de}
 A solution $x' \in \cX$ \emph{dominates} another solution $x \in \X$ if $z(x') \leq z(x)$. We also say that $z(x')$ dominates $z(x)$. A solution $x\in\X$ is an \emph{efficient} solution, if there is no $x' \in \X$ such that $x'$ dominates $x$. Then $z(x)$ is called \emph{non-dominated}.
\end{de}
Solving a multi-objective optimization problem $\min\{z(x)=(z_1(x),...,z_k(x)): x\in \X\}$ means to find its efficient solutions.
\begin{de} \label{def_equivalent_complete}
 Two efficient solutions $x,x' \in \cX$ are called \emph{equivalent} if $z(x) = z(x')$. A set of efficient solutions $\bar{\cX}\subseteq \cX$ is called \emph{complete} if all $x\in \cX \setminus \bar{\cX}$ are either dominated by or equivalent to at least one $x' \in \bar{\cX}$. 
\end{de}

\subsection{Robust optimization}
We briefly introduce robust optimization for single-objective problems.\\
In robust optimization the uncertain input data is given as an \emph{uncertainty set} $\cU$, containing all possible \emph{scenarios} that can occur. For each scenario $\xi \in \cU$ we obtain a different instance of the optimization problem $\min_{x \in \X} z(x, \xi)$. 
\begin{de}
 Given a feasible set of solutions $\X$, an uncertainty set $\cU$ and an objective function $z: \X \times \cU \rightarrow \R$, we define an \emph{uncertain optimization problem} as the family of parameterized problems on $\X$ \[\left( \min_{x\in\X} z(x,\xi), \xi \in \cU \right).\]
\end{de}
We only consider problems with uncertainty in the objective function, not in the constraints. This is because, in the considered robustness concepts, a solution is only robust feasible if it is feasible for every scenario. This is reasonable for many combinatorial optimization problems, e.g., when choosing a path in a road or transportation network: If we decide on a path to take without knowing which scenario will occur, this path should at least exist for every scenario. Hence, we have deterministic constraints.\\
There are different \emph{robustness concepts} offering a definition of a \emph{robust solution} for an uncertain optimization problem, usually by defining a deterministic problem, called the \emph{robust counterpart} (see \cite{GoeSchoe13-AE} for an overview). The concept of \emph{minmax robustness}, also called \emph{strict} or \emph{worst case} robustness, seeks solutions, for which the worst possible objective value is minimized. The solutions can be found by solving the robust counterpart \[\min_{x \in \X} \sup_{\xi \in \cU} z(x,\xi).\]
The considered uncertainty set often strongly influences the solvability and the solution approaches.
A \emph{finite uncertainty set} consists of finitely many scenarios, whereas, in an \emph{interval uncertainty set}, the coefficients vary in intervals independently of each other. If the coefficients vary in intervals, but only a given number of coefficients may differ from their minimal values, we speak of \emph{cardinality-constrained uncertainty} \cite{bertsimas2003robust}.

\subsection{Multi-objective robust optimization}

If several objective functions and uncertainties in (some of) these functions are given, we obtain a \emph{multi-objective uncertain optimization problem}.
\begin{de}
 Given a feasible set of solutions $\X$, an uncertainty set $\cU$ and a multi-objective function $z: \X \times \cU \rightarrow \R^k$, the family of multi-objective optimization problems 
 \begin{align} \left( \min_{x\in\X} z(x,\xi), \xi \in \cU \right) \label{MO_uncertain}\end{align} 
 is called a \emph{multi-objective uncertain optimization problem}.
\end{de}
There are several definitions of robust efficiency for multi-objective uncertain problems (see, e.g., \cite{IdeSchoe13}). The concept of minmax robust optimality for single-objective uncertain problems has been generalized to several objectives in various ways, since the notion of \emph{worst case} is not clear in the multi-objective case. An intuitive concept, introduced by Kuroiwa and Lee \cite{kuroiwa2012robust}, is to determine the worst case independently for each objective (see Definition~\ref{def_minmax_pointbased}). This yields a single vector for each solution and these vectors can be compared using the methods of multi-objective optimization.
\begin{de} \label{def_minmax_pointbased}
 A solution $x \in \X$ is \emph{robust efficient} for Problem (\ref{MO_uncertain}) if $x$ is an efficient solution for the robust counterpart
 \[ \min_{x\in \X} \zRC(x) = \begin{pmatrix}
                    \sup_{\xi \in \cU} z_1(x,\xi)\\
                    \vdots\\
                    \sup_{\xi \in \cU} z_k(x,\xi)
                   \end{pmatrix}.\]
 \end{de}

\begin{remark} In this paper, we only consider uncertainty sets where the uncertainties in the objectives are independent of each other. That means that robust efficiency, as defined in Definition~\ref{def_minmax_pointbased}, is the same as point-based and set-based minmax robust efficiency defined in \cite{EIS14}. Therefore, all results shown in this paper are valid for both concepts.\end{remark}
Analogously to Definition~\ref{def_equivalent_complete} we define:
\begin{de}
 Two robust efficient solutions $x,x' \in \cX$ are called \emph{equivalent} if $\zRC(x) = \zRC(x')$. A set of robust efficient solutions $\bar{\cX} \subseteq \cX$ is called \emph{complete} if all $x\in \cX \setminus \bar{\cX}$ are either dominated w.r.t.~$\zRC$ or equivalent to at least one $x' \in \bar{\cX}$. 
\end{de}

\subsection{Multi-objective robust combinatorial optimization}
An instance $(E,Q,\cU,\cost)$ of a multi-objective uncertain combinatorial optimization problem is given by a finite element set $E$, a set $Q \subseteq 2^{|E|}$ of feasible solutions, which are subsets of $E$, an uncertainty set $\cU$ and a function $\cost$, that assigns a \emph{cost} vector $\cex{e}{\xi} = (\cexi{e}{\xi}{1},...,\cexi{e}{\xi}{k}) $ to each element $e\in E$ and scenario $\xi \in \cU$. For each scenario $\xi$ the \emph{cost} $z(q,\xi)$ of a set $q$ with respect to $\xi$ is the sum of the costs of its elements. 
We aim to find a complete set of robust efficient solutions (according to Definition~\ref{def_minmax_pointbased}) for 
\begin{align*} \left( \min_{q \in Q} z(q,\xi)=\sum_{e \in q} \cex{e}{\xi}, \xi \in \cU \right),\end{align*} i.e., to find a complete set of efficient solutions for the robust counterpart 
\begin{align*}  \min_{q \in Q} \begin{pmatrix}
				  \max_{\xi \in \cU} \sum_{e \in q} \cexi{e}{\xi}{1}\\
				  \vdots\\
                                 \max_{\xi \in \cU} \sum_{e \in q} \cexi{e}{\xi}{k}
                               \end{pmatrix}.
\end{align*}

\subsection{Example: The multi-objective uncertain shortest path problem}\label{subsection_shortest_path}

 Consider a graph $G=(V,E)$ with node set $V$ and edge set $E$, a start node $s\in V$ and a termination node $t\in V$. Let $\cU$ be an uncertainty set and $\cost$ be a function that assigns a \emph{cost} or \emph{length} $\cex{e}{\xi} = (\cexi{e}{\xi}{1},...,\cexi{e}{\xi}{k}) $ to each edge $e\in E$ and scenario $\xi \in \cU$. For a path $q$ in $G$ and a scenario $\xi \in \cU$ the \emph{cost} or \emph{length} $z(q,\xi)$ of $q$ w.r.t. $\xi$ is obtained by following the path and adding up the costs $\cex{e}{\xi}$ of the edges traversed. \\ 
 We distinguish between \emph{simple paths}, which contain each node at most once and \emph{paths}, which may contain nodes and edges more than once. In the deterministic case, there always either exists a simple path being a shortest path, or no finite shortest path exists. On the contrary, robust shortest paths that contain a cycle but are not optimal without the cycle can exist, even in case of only one objective (see Example~\ref{example_robust_path_with_cycle}). \\ In the following we assume \emph{conservative} edge costs, i.e., every cycle $C$ has non-negative cost $z(C,\xi) \geq 0$ for each scenario $\xi \in \cU$ and objective $i=1,...,k$. Then, there always exists a complete set of robust efficient paths containing only simple paths and the \emph{multi-objective uncertain shortest path problem} is
  \begin{align*} \left( \min_{q \in Q} z(q,\xi)=\sum_{e\in q} \cex{e}{\xi}, \xi \in \cU \right)\end{align*}
  with $Q$ being the set of simple paths from $s$ to $t$ in $G$. Because simple paths do not contain any edge more than once, this is a combinatorial optimization problem. \\
 
The following single-objective example shows that, when edge costs are not conservative, we can indeed have robust shortest paths which contain cycles while no simple robust shortest path exists.
  \begin{example}\label{example_robust_path_with_cycle}
Let $G$ be a graph that consists of a simple path $q$ from $s$ to $t$ and a cycle $C$ connected to $q$ (Figure~\ref{figure_robust_path_with_cycle}). Let two scenarios $\xi_1, \xi_2$ be given and let the cost of $C$ be $z(C,\xi_1)= -1$ and $z(C,\xi_2)=0$ and the cost of $q$ be $z(q,\xi_1)= 3$ and $z(q,\xi_2)=2$. Let $q^i$ for $i \in \N$ denote the path that consists of $q$ and $i$ times the cycle $C$. Then,
\begin{align*}
 \max_{\xi \in \{\xi_1, \xi_2\}} z(q,\xi) = 3 > 2 = \max_{\xi \in \{\xi_1, \xi_2\}} z(q^1,\xi) = \max_{\xi \in \{\xi_1, \xi_2\}} z(q^i,\xi)  \ \forall \ i \geqq 1,
\end{align*}
and $q$ is not robust optimal, but $q^1$ is robust optimal.
 \end{example}
  \begin{figure}
      \begin{tikzpicture}
      \tikzstyle{vertex}=[circle,fill=black,minimum size=8pt,inner sep=0pt]
      \node (null) at (0,0) {};
      \node[vertex, label=left:{s}] (node1) at (5,0) {};
      \node[vertex] (node2)  at (7.5,0) {}; 
      \node[vertex, label=right:{t}] (node3)  at (10,0) {};
      
      \draw[->,line width=0.2mm] (node1) -- (node2) node [midway, above] (TextNode) {2 $\mid$ 1};
      \draw[->,line width=0.2mm] (node2) -- (node3) node [midway, above] (TextNode) {1 $\mid$ 1};
      \draw [->,line width=0.2mm]  (node2)  to [out=45,in=135,looseness=30] node [midway,above] {-1 $\mid$ 0} node [pos=0.75,left] {C}  (node2)  ;

      \draw[-,decorate, decoration={brace, mirror}, line width=0.18mm ] (5,-0.3) -- (10,-0.3) node [pos=0.5,below] {$q$};
   \end{tikzpicture}
   \caption{In Example~\ref{example_robust_path_with_cycle} every robust shortest path contains a cycle.}\label{figure_robust_path_with_cycle}
  \end{figure}
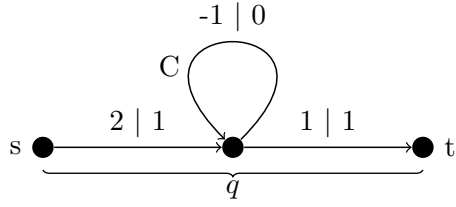
We use the following notation to specify subpaths.
\begin{nota}
 Let $q$ be a simple path and $v,w$ two nodes on $q$ ($v$ before $w$). Let then $q_{v,w}$ denote the part of $q$ from node $v$ to node $w$.
\end{nota}

\section{Algorithms for multi-objective combinatorial optimization problems with cardinality-constrained uncertainty}\label{section_algos} 
The idea of cardinality-constrained uncertainty is to assume that the worst case will not happen for all edges simultaneously, e.g., there will not be an accident on every road of a transportation network at the same time. Therefore, only those scenarios are considered where no more than a given number of elements are more expensive than their minimum costs.
 Bertsimas and Sim were the first to introduce cardinality-constrained uncertainty for single-objective uncertain optimization problems \cite{bertsimas2003robust}. One possibility to extend this concept to multi-objective optimization is the following (see \cite{Hassanzadeh13}):
\begin{de} \label{def_mo_gamma_uncertainty}
For each element $e \in E$ and each objective $z_i$ let two real values $\nomei{e}{i}$ and $\delta_{e,i} \geqq 0$ be given. We assume that the uncertain cost $\cei{e}{i}$ can take any value in the interval $[\nomei{e}{i}, \nomei{e}{i} + \delta_{e,i}]$, with $\nomei{e}{i}$ being the undisturbed value, called the \emph{nominal} value. For each objective $z_i$ let an integer $\Gamma_i \leqq |E|$ be given. The \emph{cardinality-constrained uncertainty set} contains all scenarios, in which for each scenario $i$ at most $\Gamma_i$ elements differ from their nominal costs:
 \[ \cU:= \{ \cost \in \R^{|E|\times k}: \cei{e}{i} \in [ \nomei{e}{i}, \nomei{e}{i} + \delta_{e,i} ] \ \forall \  e\in E,\ \forall \ i =1,...,k, |\{ e: \cei{e}{i} > \nomei{e}{i}\}|\leqq \Gamma_i \ \forall \ i=1,...,k\} \] 
\end{de}

An instance of a multi-objective combinatorial optimization problem with cardinality-constrained uncertainty is hence given by $(E, Q,\nom, \delta, \Gamma=(\Gamma_1,...,\Gamma_k))$.\\

\subsection{Deterministic Subproblems Algorithm (\DSA{})}\label{section_DSA}
The algorithms in this subsection are built upon an algorithm by Bertsimas and Sim for single-objective cardinality-constrained uncertain combinatorial optimization problems \cite{bertsimas2003robust}, which we call Deterministic Subproblems Algorithm (\DSA{}).  Its idea is to find solutions for the uncertain problem by solving up to $|E|+1$ deterministic problems of the same type and comparing their solutions.\\ We describe first the algorithm of Bertsimas and Sim for single-objective problems and present several ways to reduce the number of subproblems to be solved (Section~\ref{subsection_detsubproblems_one_objective}). In Section~\ref{subsection_detsubproblems_objective_independen}, we show that \DSA{} can be adjusted for multi-objective problems with a special property. Lastly, we extend the algorithm for the general multi-objective case in Section~\ref{subsection_detsubproblems_multi-objective}.

\subsubsection{\DSA{} for one objective} \label{subsection_detsubproblems_one_objective}

 We first consider the single-objective uncertain problem $\left( \min_{q \in Q} z(q,\xi), \xi \in \cU \right)$ with \[\cU = \{ \cost \in \R^{|E|}: \ce{e} \in [ \nome{e}, \nome{e} + \delta_{e} ] \ \forall \  e\in E, |\{ e: \ce{e} > \nome{e}\}|\leqq \Gamma \}.\] 
 The worst case for a set $q \in Q$ with respect to this uncertainty is a scenario, 
where the costs of its $\Gamma$ elements with the largest intervals $\delta_e$ take their maximal value 
(resp.~all elements in $q$, if $q$ has less than $\Gamma$ elements).
Assume that the elements are ordered with respect to the interval length $\delta$, i.e., 
 \[\idelta_1:= \delta_{e_1} \geqq \idelta_2:= \delta_{e_2}\geqq... \geqq\idelta_{|E|}:= \delta_{e_{|E|}} \geqq \idelta_{|E|+1} := 0.\] 
 For each $l\in \{1,...,|E|+1\}$ we define the function $\zBS^l$ as follows \cite{bertsimas2003robust}:
 \[\zBS^l(q) :=   \sum_{e \in q} \nome{e} + \Gamma \cdot \idelta_{l} + \sum_{\substack{e_j \in q \\ j \leqq l}} (\delta_{e_j} - \idelta_{l}). \] 
The function $\zBS^l(q)$ is an approximation of the worst case costs of the set $q$. It contains
\begin{itemize}
 \item the nominal cost $\nome{e}$ for each element $e\in q$, which has to be paid also in the worst case, 
 \item $\idelta_l \cdot \Gamma$ since, in the worst case, the interval length $\delta_e$ has to be added to the costs for (at most) $\Gamma$ elements, 
 \item the positive summand $\max \{0, \delta_e - \idelta_l\}$ for each element $e \in q$ to account for all elements in the set with higher interval lengths than $\idelta_l$.
\end{itemize}
The idea of the algorithm of \cite{bertsimas2003robust} is to solve all problems 
\[ (\cSP(l))\ \min_{q\in Q} \zBS^l(q) \]
for $l=0,1,\ldots |E|+1$ and choose the best of the obtained solutions. 
This idea works due to the following two properties:
\begin{enumerate}
\item For every set $q$ and every $l\in \{0,\ldots,|E|+1\}$ we have that 
$\zBS^l(q)$ is always greater than or equal to the worst case cost $\zRC(q)$. 
\item For every set $q$ there exists some $l \in \{0,\ldots,|E|+1\}$ such that 
$\zBS^l(q)$ equals the worst case cost $\zRC(q)$.
\end{enumerate}
To show the first property, let $q$ be a set and let $\{ e_{a_1},\ldots, e_{a_h} \}$ be a subset of $h$ elements in $q$ with the largest cost intervals, where $h =\min\{|q|,\Gamma\}$. Then $\zRC(q)=\sum_{e \in q} \nome{e} + \sum_{j=1}^h \delta_{e_{a_j}}$ and we get
\[\zBS^l(q) \geqq \sum_{e\in q} \nome{e} + \sum_{j=1}^h \idelta_l + \sum_{j=1}^h \max \{0,\delta_{e_{a_j}} - \idelta_l\} \geqq  \zRC(q).\]
For the second property we show that for each set $q$ there exists at least one index $l$ with 
$\zBS^l(q) = \zRC(q)$: If $q$ has less than $\Gamma$ elements, then 
\[g^{|E|+1}(q) = \sum_{e\in q}\nome{e} + \Gamma \cdot 0 + \sum_{e\in q} (\delta_e - 0) = \zRC(q).\] 
If $q$ has at least $\Gamma$ elements, let $e_l$ be the element in $q$ with the $\Gamma$-th smallest index. Then the $\Gamma$ elements $\{ e_j \in q : j \leqq l\}$ have the largest cost intervals in $q$ and it follows that
\[ \zBS^{l}(q) = \sum_{e \in q} \nome{e} + \Gamma \cdot \idelta_{l} + \sum_{\substack{e_j \in q \\ j \leqq l}} (\delta_{e_j} - \idelta_{l}) = \sum_{e \in q} \nome{e} + \sum_{\substack{e_j \in q \\ j \leqq l}} \idelta_{l} + \sum_{\substack{e_j \in q \\ j \leqq l}} (\delta_{e_j} - \idelta_{l}) = \zRC(q).\]
Having these two properties, we see that a robust optimal solution $q^*$ is also optimal for problem $(\cSP(l))$ with $l : \zBS^{l}(q) = \zRC(q)$, since none of the other sets can have a better objective value. Therefore, at least one robust optimal solution will be found by the iterative algorithm. \\ The efficiency of the algorithm depends on the time complexity to solve the subproblems $(\cSP(l))$.
Because the summand $\Gamma \cdot \idelta_l$ is solution-independent, a solution for $(\cSP(l))$ 
can be found efficiently by solving a problem of the same kind as the underlying deterministic problem
with element costs
\begin{align*}  \ce{e_j}^l := \begin{cases}
         \nome{e_j} + (\delta_{e_j} - \idelta_{l}) &\text{ for } j < l  \\
         \nome{e_j} &\text{ for } j \geqq l.
        \end{cases} \tag{\textasteriskcentered}\label{bertsim_sp_lenghts}
\end{align*}
\\Algorithm~\ref{alg_Bert_Sim_structure} shows the basic structure of the algorithm by Bertsimas and Sim (\cite{bertsimas2003robust}). First, the elements are ordered with respect to their interval lengths, then the subproblems defined above are solved and finally the worst case values of all obtained solutions are compared to find the robust optimal ones. Because the solutions of each subproblem can be obtained by solving a deterministic problem of the same kind, this algorithm finds a robust optimal solution in polynomial time for many combinatorial optimization problems, e.g., for the minimum spanning tree and the shortest path problem.

 \begin{algorithm} 
 	\caption{Basic structure of \DSA{} (based on \cite{bertsimas2003robust})} \label{alg_Bert_Sim_structure}
	\begin{algorithmic}[1]
	 \Require an instance $I=(E, Q,\nom, \delta, \Gamma)$ of a single-objective cardinality-constrained uncertain combinatorial optimization problem
	\Ensure a robust efficient solution for $I$
	 \State\label{alg_step_sort} Sort $E$ w.r.t.~$\delta_e$ such that $\idelta_{1} := \delta_{e_1} \geqq \idelta_{2} := \delta_{e_2} \geqq ... \geqq \idelta_{{|E|}} \geqq \idelta_{{|E|+1}} := 0$.
	 \State\label{alg_step_determine} Determine $L:= \{1,...,|E|+1\}$.
	 \State\label{alg_step_solve} For all $l \in L$ find an optimal solution $q^l$ for $(\cSP(l))$.
	 \State\label{alg_step_compare} Compare the objective values $\zRC(q^l)$ for all $ l \in L$ to obtain a robust optimal solution. 
	\end{algorithmic}
\end{algorithm}

In the following, we show how Algorithm~\ref{alg_Bert_Sim_structure} can be enhanced. It is not necessary to solve all of the $|E|+1$ subproblems introduced above. The following three results from \cite{bertsimas2003robust, park2007note, lee2014short} can be used to reduce the number of subproblems (Lemma~\ref{lemma_alg_Bertsim_faster_1}): First, if two elements have the same interval length $\delta_e$, then their associated subproblems are equal. Second, the worst case cost of a set equals its objective value not only for the special subproblem shown above, but also for the next subproblem. Therefore, we do not miss any solutions if we only solve every second problem. Third, none of the first $\Gamma-1$ elements can be the one with the $\Gamma$-th smallest index for any set in $Q$, so their associated subproblems need not to be solved.  
\begin{lemma}[\cite{bertsimas2003robust, park2007note, lee2014short}] \label{lemma_alg_Bertsim_faster_1} 
The number of subproblems to be solved by {Algorithm~\ref{alg_Bert_Sim_structure}} can be reduced to at most ${\left\lceil \frac{|E|-\Gamma}{2} \right\rceil +1}$ in the following ways:
 \begin{enumerate}
 \item \label{lemma_alg_Bertsim_faster_1_a}  If there are several elements $e_l,...,e_{(l+h)}$ with the same interval length $\delta_{e_l} = ... = \delta_{e_{l+h}}$, only one of the subproblems $\cSP(l), ..., \cSP(l+h)$ needs to be solved \cite{bertsimas2003robust}. 
 \item \label{lemma_alg_Bertsim_faster_1_c} Only every second subproblem and the last subproblem need to be solved \cite{lee2014short}.
 \item \label{lemma_alg_Bertsim_faster_1_b} It is sufficient to start with the $\Gamma$-th subproblem \cite{park2007note}.
\end{enumerate}
\end{lemma}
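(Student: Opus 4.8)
The plan is to derive all three reductions from a single comparison of consecutive subproblems and then to convert the resulting structure of \emph{tight} indices into a counting argument. Throughout I use the fact, established above, that the iterative algorithm is correct as soon as the solved index set $L$ contains, for the (unknown) robust optimal set $q^*$, \emph{one} index $l$ with $\zBS^l(q^*)=\zRC(q^*)$: the inequality chain $\zBS^l(q)\geqq\zRC(q)\geqq\zRC(q^*)=\zBS^l(q^*)$ then forces $q^*$ to minimise $\zBS^l$, so a robust optimum is returned by $(\cSP(l))$. Hence it suffices to show that the reduced index set still meets a tight index of \emph{every} feasible set, in particular of $q^*$.

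The computational core is the difference of two neighbouring objectives. Writing $m := |\{e_j\in q : j\leqq l\}|$ and using that $\delta_{e_{l+1}}=\idelta_{l+1}$ by the sorting, a short calculation gives, for every $q\in Q$ and every $l\leqq|E|$,
\[ \zBS^{l+1}(q)-\zBS^l(q) = (m-\Gamma)\,(\idelta_l-\idelta_{l+1}). \]
Both part~\ref{lemma_alg_Bertsim_faster_1_a} and the decisive half of part~\ref{lemma_alg_Bertsim_faster_1_c} fall out of this identity. If $\idelta_l=\idelta_{l+1}$ the right-hand side vanishes for every $q$, so $\zBS^l$ and $\zBS^{l+1}$ are the \emph{same} function on $Q$ and $(\cSP(l)),(\cSP(l+1))$ share their optimal solutions; iterating over a maximal block of equal interval lengths proves part~\ref{lemma_alg_Bertsim_faster_1_a}. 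For a set $q$ with $|q|\geqq\Gamma$, the tight index $l$ from the text is the one where $e_l$ has the $\Gamma$-th smallest index in $q$, i.e.\ $m=\Gamma$; then the right-hand side again vanishes, whence $\zBS^{l+1}(q)=\zBS^l(q)=\zRC(q)$. Thus \emph{two consecutive} indices are tight for every such $q$, which is exactly what allows one to skip every other subproblem.

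It remains to locate the tight indices. The $\Gamma$-th smallest index of any set is at least $\Gamma$, so no set of size $\geqq\Gamma$ has a tight index below $\Gamma$, and a set of size $<\Gamma$ is, as already shown, made tight by the last subproblem $|E|+1\geqq\Gamma$; hence every tight index lies in $\{\Gamma,\dots,|E|+1\}$, which is part~\ref{lemma_alg_Bertsim_faster_1_b}. The indices we must therefore ``hit'' are the consecutive pairs $\{l,l+1\}$ for $\Gamma\leqq l\leqq|E|$, coming from sets of size $\geqq\Gamma$, together with the single index $|E|+1$, coming from smaller sets.

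Finally I would count. Placing $|E|+1$ in $L$ already covers the topmost pair $\{|E|,|E|+1\}$, so the remaining task is to hit the consecutive pairs on $\{\Gamma,\dots,|E|\}$; this is a minimum vertex cover of a path with $|E|-\Gamma$ edges, of size $\lceil\tfrac{|E|-\Gamma}{2}\rceil$, realised by taking every second index. Adding the one index $|E|+1$ gives the bound $\lceil\tfrac{|E|-\Gamma}{2}\rceil+1$. I expect the bookkeeping of this last step to be the only delicate point: the bound comes out as ``$+1$'' rather than ``$+2$'' precisely because the mandatory last subproblem does double duty by also covering the pair $\{|E|,|E|+1\}$, and one must notice this overlap to match the stated count.
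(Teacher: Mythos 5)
Your proposal is correct and takes essentially the same route as the paper, which cites this lemma to the literature but proves its multi-objective analogue (Lemma~\ref{lemma_alg_Bertsim_faster_1_sortable}) in precisely this way: equal interval lengths make consecutive subproblems identical, the index $l$ of the $\Gamma$-th smallest-index element of $q$ and the index $l+1$ are both tight (i.e.\ $\zBS^l(q)=\zBS^{l+1}(q)=\zRC(q)$), tight indices never fall below $\Gamma$, and the count $\left\lceil \frac{|E|-\Gamma}{2} \right\rceil +1$ follows. Your only departures are cosmetic and sound: you unify parts 1 and 2 through the single difference identity $\zBS^{l+1}(q)-\zBS^l(q)=(m-\Gamma)(\idelta_l-\idelta_{l+1})$, where the paper instead verifies the $m=\Gamma$ case by a direct term-by-term manipulation, and you phrase the final bookkeeping as a vertex cover of a path, which makes explicit the same overlap (the mandatory index $|E|+1$ covering the pair $\{|E|,|E|+1\}$) that the paper's floor/ceiling computation uses implicitly.
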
 
Using these results we can replace Step~\ref{alg_step_determine} of the basic structure with Algorithm~\ref{alg_Bert_Sim_alternative_step2}. 
\begin{algorithm} 
 \caption{Improved Step~\ref{alg_step_determine} of Algorithm~\ref{alg_Bert_Sim_structure}: Determine the subproblems to be solved. }\label{alg_Bert_Sim_alternative_step2}
 \begin{algorithmic}[1]
	\Require an edge set $E$ with cost interval lengths $\delta$, a value $\Gamma \leqq |E|$
	\Ensure an index set $L$ of subproblems to be solved in Algorithm~\ref{alg_Bert_Sim_structure}
  	 \State\label{alg_Bert_Sim_determine_begin} $l := \Gamma + 1$ \Comment{\emph{Lemma~\ref{lemma_alg_Bertsim_faster_1} (\ref{lemma_alg_Bertsim_faster_1_b}.,~\ref{lemma_alg_Bertsim_faster_1_c}.)}}
	 \State $L:= \{l\}$ 
	 \While {$l<|E|+1$}
	    \While {$l<|E|+1$ and $\delta_{e_l} = \delta_{e_{l+1}}$} $l := l+1$ \Comment{\emph{Lemma~\ref{lemma_alg_Bertsim_faster_1} (\ref{lemma_alg_Bertsim_faster_1_a}.)}}
	    \EndWhile
	    \If {$l<|E|+1$} $l:=l+1$
		\If {$l<|E|+1$} $l:=l+1$ \Comment{\emph{Lemma~\ref{lemma_alg_Bertsim_faster_1} (\ref{lemma_alg_Bertsim_faster_1_c}.)}}
		\EndIf
		\State $L:= L \cup \{l\}$
	    \EndIf
	  \EndWhile\label{alg_Bert_Sim_determine_end}
 \end{algorithmic}
 \end{algorithm}
 
Depending on the solutions that are found, while the algorithm is executed, we can further reduce the number of subproblems to be solved. We will refer to this enhancement as \emph{solution checking}.  
\begin{lemma}\label{lemma_alg_Bertsim_faster_2}
 Let $1\leqq \tilde{l} \leqq l \leqq|E|+1$ and let $q^{\tilde{l}}$ be an optimal solution for $\cSP(\tilde{l})$. If $q^{\tilde{l}}$ does not contain any of the elements $e_1,...,e_{l-1}$, then it is optimal for $\cSP(l)$.
\end{lemma}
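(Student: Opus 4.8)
The plan is to reduce everything to the solution-independent shift $\Gamma\cdot\idelta_l$ plus a comparison of the element costs, and to exploit that $q^{\tilde{l}}$ uses only ``cheap'' elements. First I would rewrite the subproblem objective in the form
\[\zBS^l(q) = \Gamma\cdot\idelta_l + \sum_{e_j\in q}\bigl(\nome{e_j} + \max\{0,\delta_{e_j}-\idelta_l\}\bigr),\]
which coincides with the definition of $\zBS^l$ via the reduced costs $\ce{e_j}^l$, since the elements are sorted so that $\delta_{e_j}\geqq\idelta_l$ for $j\leqq l$ and $\delta_{e_j}\leqq\idelta_l$ for $j\geqq l$. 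The key structural observation is that, as $q^{\tilde{l}}$ avoids $e_1,\dots,e_{l-1}$, every $e_j\in q^{\tilde{l}}$ satisfies $j\geqq l$ and hence $\delta_{e_j}\leqq\idelta_l\leqq\idelta_{\tilde{l}}$ (using $\tilde{l}\leqq l$ and the monotonicity of $\idelta$). Thus all correction terms vanish on $q^{\tilde{l}}$, leaving $\zBS^l(q^{\tilde{l}}) = \Gamma\idelta_l + \sum_{e_j\in q^{\tilde{l}}}\nome{e_j}$ and $\zBS^{\tilde{l}}(q^{\tilde{l}}) = \Gamma\idelta_{\tilde{l}} + \sum_{e_j\in q^{\tilde{l}}}\nome{e_j}$, so the two objectives differ on $q^{\tilde{l}}$ only by the shift $\Gamma(\idelta_l-\idelta_{\tilde{l}})$.

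Next I would take an arbitrary competitor $q\in Q$ and show $\zBS^l(q)\geqq\zBS^l(q^{\tilde{l}})$ by transporting the optimality of $q^{\tilde{l}}$ from level $\tilde{l}$ to level $l$. Since $l\geqq\tilde{l}$ gives $\idelta_l\leqq\idelta_{\tilde{l}}$, the correction term is monotone in $l$, i.e.\ $\max\{0,\delta_{e_j}-\idelta_l\}\geqq\max\{0,\delta_{e_j}-\idelta_{\tilde{l}}\}$ for every element. Subtracting the shift $\Gamma\idelta_l$ from $\zBS^l(q)$ and applying this monotonicity termwise bounds the remaining sum from below by $\sum_{e_j\in q}\bigl(\nome{e_j}+\max\{0,\delta_{e_j}-\idelta_{\tilde{l}}\}\bigr)=\zBS^{\tilde{l}}(q)-\Gamma\idelta_{\tilde{l}}$. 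Optimality of $q^{\tilde{l}}$ for $\cSP(\tilde{l})$ then gives $\zBS^{\tilde{l}}(q)\geqq\zBS^{\tilde{l}}(q^{\tilde{l}})$, and plugging in the clean expression for $\zBS^{\tilde{l}}(q^{\tilde{l}})$ from the first step yields exactly $\zBS^l(q^{\tilde{l}})-\Gamma\idelta_l$. Re-adding $\Gamma\idelta_l$ completes the chain $\zBS^l(q)\geqq\zBS^l(q^{\tilde{l}})$, proving optimality of $q^{\tilde{l}}$ for $\cSP(l)$.

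The calculations here are routine once the $\max$-form is in place, so I expect the only real obstacle to be identifying the two facts that drive the argument: that the correction terms disappear on $q^{\tilde{l}}$ because it contains none of the large-interval elements $e_1,\dots,e_{l-1}$, and that for a fixed element the correction $\max\{0,\delta_{e_j}-\idelta_l\}$ is nondecreasing as $l$ grows. These two monotonicity observations are precisely what let the optimality established at level $\tilde{l}$ be re-used at level $l$, which is the content of the lemma and the basis of the ``solution checking'' speed-up.
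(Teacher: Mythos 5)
Your proof is correct and follows essentially the same route as the paper's: your cost-per-element expression $\nome{e_j}+\max\{0,\delta_{e_j}-\idelta_l\}$ is exactly the paper's reduced cost $\ce{e_j}^l$ from (\ref{bertsim_sp_lenghts}), your two monotonicity observations are the paper's element-wise relations $\ce{e_j}^{\tilde{l}} \leqq \ce{e_j}^l$ (for all $j$) and $\ce{e_j}^{\tilde{l}} = \ce{e_j}^l$ (for $j \geqq l$), and your final chain of inequalities is the paper's chain $\sum_{e\in q^{\tilde{l}}} \ce{e}^l = \sum_{e\in q^{\tilde{l}}} \ce{e}^{\tilde{l}} \leqq \sum_{e\in q} \ce{e}^{\tilde{l}} \leqq \sum_{e\in q} \ce{e}^l$ with the constant shift $\Gamma\cdot\idelta_l$ carried along explicitly rather than dropped at the outset.
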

\begin{proof}
 We can find a solution of $\cSP(l)$ by solving a problem with the deterministic costs given in (\ref{bertsim_sp_lenghts}).
 For these costs we have \begin{align*}
             &\tilde{l} \leqq l \Rightarrow \idelta_{{\tilde{l}}} \geqq \idelta_{l}  \Rightarrow \ce{e_j}^{\tilde{l}} \leqq \ce{e_j}^l &&\ \forall \ e_j: j < \tilde{l}, \\
             &j \leqq l \Rightarrow \delta_{e_j} \geqq \idelta_{l} \Rightarrow \ce{e_j}^{\tilde{l}} = \nome{e_j} \leqq \nome{e_j} + (\delta_{e_j} - \idelta_{l}) = \ce{e_j}^l   &&\ \forall \ e_j: \tilde{l} \leqq j < l, \\
             &\tilde{l} \leqq l \Rightarrow \ce{e_j}^{\tilde{l}} =\nome{e_j} =  \ce{e_j}^l  &&\ \forall \ e_j: j \geqq l.
            \end{align*}
If $q^{\tilde{l}}$ does not contain any element $e_j : j < l$, then \[ \sum_{e\in {q^{\tilde{l}}}} \ce{e}^l =  \sum_{e\in {q^{\tilde{l}}}} \ce{e}^{\tilde{l}}\ \leqq \sum_{e\in q} \ce{e}^{\tilde{l}} \leqq \sum_{e\in q} \ce{e}^l \ \forall \ q \in Q,\] hence, ${q^{\tilde{l}}}$ is optimal for $\cSP(l)$. 
\end{proof}

We can therefore replace Step~\ref{alg_step_solve} of the basic structure (Algorithm~\ref{alg_Bert_Sim_structure}) with Algorithm~\ref{alg_Bert_Sim_alternative_step3}.\\
\begin{algorithm}
\caption{Improved step~\ref{alg_step_solve} of Algorithm~\ref{alg_Bert_Sim_structure}: Solve subproblems (with solution checking).}\label{alg_Bert_Sim_alternative_step3}
 \begin{algorithmic}[1]
  \Require $I=(E, Q,\nom, \delta, \Gamma)$ with $E$ ordered w.r.t. $\delta_e$, $\idelta$, an index set $L$ of subproblems
  \Ensure a set of solutions $\{q^l: l\in L\}$
 \State $\tilde{l} := 0$
    \ForAll{$l\in L$ in increasing order}
	\If{$\tilde{l} = 0$ or $q^{\tilde{l}} $ contains any element in $\{e_1,...,e_{l-1}\}$}\label{alg_Bert_Sim_alternative_step3_if}
	  \State Find an optimal solution $q^l$ for $(\cSP(l))$.
	\Else 
	\ $q^l:= q^{\tilde{l}}$
	\EndIf
	\State $\tilde{l} := l$
     \EndFor
 \end{algorithmic}
\end{algorithm} 

Lemma~\ref{lemma_alg_Bertsim_faster_2} does not contain any theoretical complexity result since, in the worst case, still ${\left\lceil \frac{|E|-\Gamma}{2} \right\rceil +1}$ subproblems are solved. Nevertheless, the results of our experiments in Section~\ref{section_experiments} show the practical use of this improvement.

\subsubsection{Extension to the multi-objective problem with objective-independent element order}\label{subsection_detsubproblems_objective_independen} 
In this section we adjust Algorithm~\ref{alg_Bert_Sim_structure} for multi-objective problems with the following property:
\begin{de}
  An instance $(E, Q,\nom, \delta, \Gamma)$ has \emph{objective independent element order} if 
  \begin{itemize}
    \item there exists an order of the elements, such that \[ \delta_{e_1,i} \geqq ... \geqq \delta_{e_{|E|},i} \ \forall \ i=1,...,k,\]
  \item and for all objective functions, the number of elements that may differ from the nominal value is the same, that is $\Gamma_1 = \Gamma_2 = ... = \Gamma_k$. In the following we use $\Gamma_1$ to denote the bound on the number of elements that may differ from the nominal value for each individual objective function.
  \end{itemize}
  
\end{de}

For multi-objective subproblems with objective independent element order, Algorithm $\ref{alg_Bert_Sim_structure}$ can be adjusted in the following way: \\
In step~\ref{alg_step_solve}, since $\Gamma, \delta_e, \nome{e}$ are vectors instead of scalars, the subproblems to be solved are the multi-objective problems
\[(\cMSP(l))\ \min_{q\in Q} \zBS^l(q) := \sum_{e \in q} \nome{e} + \Gamma \circ \idelta_{l} + \sum_{\substack{e_j \in q \\ j \leqq l}} (\delta_{e_j} - \idelta_{l}) \]
for $l=1,...,|E|+1$, with $\circ$ being the Schur (entry-wise) product and $\idelta_{|E|+1} = (0,...,0), \idelta_j := \delta_{e_j} \ \forall \ e_j \in E$. Because we solve multi-objective problems, we are looking for a complete set of efficient solutions for each subproblem instead of a single solution. Such a solution set can be found by solving a deterministic multi-objective problem. We denote the solution set, that we obtain for $(\cMSP(l))$, by $\ZBS^{l}$. 
\\ In Step~\ref{alg_step_compare}, every found solution $q$ whose objective vector $\zRC(q)$ is not dominated by the objective vector of any of the other solutions is robust efficient. We will refer to this special version of the \DSA{} for multi-objective instances with objective independent element order as \emph{objective independent \DSA{}} or \emph{\DSA{}-oi}.
\begin{algorithm} 
\caption{\DSA{} for multi-objective instances with objective independent element order (\DSA{}-oi)}\label{alg_Bert_Sim_sortable}
	\begin{algorithmic}[1]
	\Require an instance $I=(E, Q,\nom, \delta, \Gamma)$ of a multi-objective cardinality-constrained uncertain combinatorial optimization problem with objective independent element order
	\Ensure a complete set of robust efficient solutions for $I$
	 \State Sort $E$ w.r.t.~$\delta_e$ such that $\idelta_1 := \delta_{e_1} \geqq \idelta_2 := \delta_{e_2} \geqq ... \geqq \idelta_{{|E|}} \geqq \idelta_{|E|+1} := (0,...,0).$ 
	 \State Determine $L:= \{1,...,|E|+1\}$.
	 \State For all $l \in L$ find a complete set of efficient solutions $\ZBS^l$ for $(\cMSP(l))$.
	 \State Compare the objective vectors $\zRC(q)$ of all solutions in $\cup_{l \in L} \ZBS^l$.  The solutions with non-dominated objective vectors form a complete set of robust efficient solutions. 
	\end{algorithmic}
\end{algorithm}
 \begin{theo} \label{theo_alg_Bert_Sim_sortable}
 Algorithm~\ref{alg_Bert_Sim_sortable} finds a complete set of robust efficient solutions for multi-objective cardinality-constrained uncertain combinatorial optimization problems with objective independent element order.
\end{theo}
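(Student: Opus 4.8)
The plan is to lift the two properties that drive the single-objective algorithm (established in Section~\ref{subsection_detsubproblems_one_objective}) to the vector-valued setting, exploiting that under objective independent element order a \emph{single} element order serves all objectives at once. Throughout I fix one order $e_1,\dots,e_{|E|}$ with $\delta_{e_1,i}\geqq\dots\geqq\delta_{e_{|E|},i}$ for every $i$, and write $\Gamma_1$ for the common bound.

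First I would establish the componentwise analogues of the two single-objective facts. Applying the first property separately to each objective $i$ (which is legitimate precisely because the fixed order is non-increasing in $\delta_{\cdot,i}$ for that $i$) gives $\zRC(q)\leqq\zBS^l(q)$ for every $q\in Q$ and every $l$. Next, and this is the conceptual heart, I would show that for each $q$ there is one index $l^\ast=l^\ast(q)$ with $\zBS^{l^\ast}(q)=\zRC(q)$ in \emph{all} components simultaneously: if $|q|<\Gamma_1$ take $l^\ast=|E|+1$; otherwise let $l^\ast$ be the position in the fixed order of the $\Gamma_1$-th element of $q$. The single-objective equality argument yields $\zBS^{l^\ast}_i(q)=\zRC_i(q)$ for each $i$, and the point is that $l^\ast$ does not depend on $i$ — it is determined only by the shared order and by $\Gamma_1$. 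This is the only place where objective independence is used, and it is exactly what fails for general instances.

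With these two facts I would prove that every robust efficient $q^\ast$ is efficient for the subproblem $(\cMSP(l^\ast))$ with $l^\ast=l^\ast(q^\ast)$. Indeed, if some $q'$ dominated $q^\ast$ there, say $\zBS^{l^\ast}(q')\leq\zBS^{l^\ast}(q^\ast)$ with strict inequality in a coordinate $i_0$, then componentwise domination gives $\zRC_{i_0}(q')\leqq\zBS^{l^\ast}_{i_0}(q')<\zBS^{l^\ast}_{i_0}(q^\ast)=\zRC_{i_0}(q^\ast)$ together with $\zRC(q')\leqq\zBS^{l^\ast}(q')\leqq\zBS^{l^\ast}(q^\ast)=\zRC(q^\ast)$ in every coordinate; hence $\zRC(q')\leq\zRC(q^\ast)$, contradicting robust efficiency of $q^\ast$. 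Since $\ZBS^{l^\ast}$ is a complete set of efficient solutions for $(\cMSP(l^\ast))$, there is $\tilde q\in\ZBS^{l^\ast}$ with $\zBS^{l^\ast}(\tilde q)=\zBS^{l^\ast}(q^\ast)=\zRC(q^\ast)$; combining $\zRC(\tilde q)\leqq\zBS^{l^\ast}(\tilde q)=\zRC(q^\ast)$ with robust efficiency of $q^\ast$ forces $\zRC(\tilde q)=\zRC(q^\ast)$, so $\bigcup_{l\in L}\ZBS^l$ contains a solution equivalent to $q^\ast$ w.r.t.~$\zRC$.

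Finally I would verify that the comparison step returns a complete set of robust efficient solutions. Write $M=\{\zRC(q):q\in\bigcup_{l\in L}\ZBS^l\}$. The previous paragraph shows every solution non-dominated w.r.t.~$\zRC$ has its image in $M$. Since $Q$ is finite, every $q\in Q$ is dominated by or equal (w.r.t.~$\zRC$) to some robust efficient solution, which in turn has an equivalent image in $M$; hence a point of $M$ is non-dominated within $M$ if and only if it is non-dominated w.r.t.~$\zRC$ among all of $Q$. Consequently the solutions retained in the last step are exactly the found solutions with non-dominated image: each such solution is robust efficient, and by the equivalence just established every robust efficient solution is equivalent to one of them. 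I expect the main obstacle to be this last completeness argument rather than the dominance inequalities: one must rule out that a retained solution is dominated by some \emph{unretained} $q\in Q$, which is precisely what the combination of the simultaneous-equality property and finiteness of $Q$ provides.
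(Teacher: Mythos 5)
Your proof is correct and takes essentially the same approach as the paper's: the componentwise bound $\zRC(q)\leqq \zBS^l(q)$, the existence of a single index $l^\ast$ (the position of the $\Gamma_1$-th element of $q$ in the common order, which is exactly where objective independence is used) with $\zBS^{l^\ast}(q)=\zRC(q)$ in all coordinates, and the conclusion that every robust efficient solution is efficient for $(\cMSP(l^\ast))$ and hence appears (up to equivalence) among the found solutions, so that the final comparison step retains precisely a complete set of robust efficient solutions. Your last paragraph only spells out more explicitly the filtering argument that the paper states tersely.
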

\begin{proof}
    First, we show that $\zBS^{l}$ never underestimates $\zRC$ for any objective. Further, we prove that for each feasible solution $q$ there is an $l \in \{\Gamma_1,...,|E|+1\} \subseteq L$ with $\zBS^{l}(q) = \zRC(q)$. We conclude that Algorithm~\ref{alg_Bert_Sim_sortable} finds a complete set of robust efficient solutions.\\
    For each $q \in Q$ and $l=1,...,|E|+1$ we show $ \zRC_i(q) \leqq \zBS^l_i(q) \ \forall \ i=1,...,k$. Let $\{ e_{a_1},\ldots, e_{a_h} \}$ be a set of $h$ elements in $q$ with the largest cost intervals, where $h =\min\{|q|,\Gamma_i \}$. Then,
	\begin{align*}
	  \zBS^l_{i}(q) &=&& \sum_{e\in q} \nomei{e}{i} + \Gamma_i \cdot \idelta_{l,i} + \sum_{\substack{e_j\in q\\ j\leqq l}} ( \delta_{e_j,i} - \idelta_{l,i})\\
	  &=&&  \sum_{e\in q} \nomei{e}{i} + \Gamma_i \cdot \idelta_{l,i} + \sum_{e\in q} \max\{0,\delta_{e,i} - \idelta_{l,i}\} &\text{ since } j\leqq l \Rightarrow \delta_{e_j} \geqq \idelta_l, j> l \Rightarrow \delta_{e_j} \leqq \idelta_l  \\
	  &\geqq&&  \sum_{e\in q} \nomei{e}{i} + \Gamma_i \cdot \idelta_{l,i} + \sum_{j=1}^h \max\{0,\delta_{e_{a_j},i} - \idelta_{l,i}\} & \text{ since } \{ e_{a_1},\ldots, e_{a_h} \} \subseteq q \\
	  &\geqq&& 	\sum_{e\in q} \nomei{e}{i} + \Gamma_i \cdot \idelta_{l,i} + \sum_{j=1}^h (\delta_{e_{a_j},i} - \idelta_{l,i})\\
	  &\geqq&& 	\sum_{e\in q} \nomei{e}{i} + \sum_{j=1}^h \idelta_{l,i} + \sum_{j=1}^h (\delta_{e_{a_j},i} - \idelta_{l,i}) = \zRC_i(q) & \text{ since } |\{ e_{a_1},\ldots, e_{a_h} \}| \leqq \Gamma_i.      
	\end{align*}
    We show now that there is an $l\in\{\Gamma_1,...,|E|+1\}$ with $\zBS^{l}(q) = \zRC(q)$: For any set $q \in Q$ with at least $\Gamma_1$ elements, let $e_{l}$ be the element with the $\Gamma_1$-th smallest index. Then the $\Gamma_1$ elements $\{e_j\in q: j \leqq l\}$ have the largest cost intervals in $q$ with respect to every objective. It follows for all $i=1,...,k$ that
    \begin{align*}
    \zBS^l_{i}(q) &=&& \sum_{e\in q} \nomei{e}{i} + \Gamma_i \cdot \idelta_{l,i} + \sum_{\substack{e_j\in q\\ j\leqq l}} ( \delta_{e_j,i} - \idelta_{l,i})\\
    &=&&  \sum_{e\in q} \nomei{e}{i} + \sum_{\substack{e_j\in q\\ j\leqq l}} \idelta_{l,i} + \sum_{\substack{e_j\in q\\ j\leqq l}} ( \delta_{e_j,i} - \idelta_{l,i}) = \zRC_i(q) & \text{ since } |\{e_j\in q: j \leqq l\}| =  \Gamma_i.
    \end{align*}
    For any set $q\in Q$ with less than $\Gamma_1$ elements, we have for all $i=1,...,k$
    \begin{align*}
    \zBS^{|E|+1}_{i}(q) = \sum_{e\in q} \nomei{e}{i} + \Gamma_i \cdot 0 + \sum_{\substack{e_j\in q}} (\delta_{e_j,i} - 0) = \zRC_i(q).
    \end{align*}
    We conclude: If $q$ is robust efficient, then $\zRC(q) = \zBS^l(q)$ for some $l \in L$ and there is no $q' \in Q$ with $\zRC(q') \leq \zRC(q)$. It follows that
    \begin{align*}
    \nexists q' \in Q: \zRC(q') \leqq \zRC(q) \overset{ \zRC(q') \leqq \zBS^l(q')}{\Rightarrow } \nexists q' \in Q:  \zBS^l(q') \leqq \zRC(q) = \zBS^l(q).
    \end{align*}
    Therefore, $q$ or an equivalent solution is found at least once in the algorithm. It follows that in Step~\ref{alg_step_compare} the objective vector of each found solution is compared to all non-dominated objective vectors, thus only robust efficient solutions remain. It follows that the output is a complete set of robust efficient solutions.
\end{proof}
  Now, we consider the enhancements proposed in Algorithms~\ref{alg_Bert_Sim_alternative_step2} and \ref{alg_Bert_Sim_alternative_step3}. The results of Lemma~\ref{lemma_alg_Bertsim_faster_1} remain valid, Step~\ref{alg_step_determine} can hence be implemented as in Algorithm~\ref{alg_Bert_Sim_alternative_step2}. 
\begin{lemma}\label{lemma_alg_Bertsim_faster_1_sortable}
    The number of subproblems to be solved by {Algorithm~\ref{alg_Bert_Sim_sortable}} can be reduced to ${\left\lceil \frac{|E|-\Gamma_1}{2} \right\rceil +1}$ in the same ways as in the single-objective case:
    \begin{enumerate}
    \item \label{lemma_alg_Bertsim_faster_1_sortable_a}  If there are several elements $e_l,...,e_{(l+h)}$ with the same interval length $\delta_{e_l} = ... = \delta_{e_{l+h}}$, only one of the subproblems $\cMSP(l), ..., \cMSP(l+h)$ needs to be solved. 
    \item \label{lemma_alg_Bertsim_faster_1_sortable_c} Only every second subproblem and $\cMSP(|E|+1)$ need to be solved.
    \item \label{lemma_alg_Bertsim_faster_1_sortable_b} It is sufficient to start with $\cMSP(\Gamma_1)$.
    \end{enumerate}
\end{lemma}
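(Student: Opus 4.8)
The plan is to prove Lemma~\ref{lemma_alg_Bertsim_faster_1_sortable} by reducing each of the three claims to its single-objective counterpart in Lemma~\ref{lemma_alg_Bertsim_faster_1}, exploiting the objective-independent element order. The crucial observation is that, under this property, there is a \emph{single} ordering of the elements that simultaneously sorts the interval lengths $\delta_{e,i}$ non-increasingly for every objective $i$, and that $\Gamma_1 = \dots = \Gamma_k$. Thus the index set $L$ of subproblems to be solved is defined purely in terms of this common ordering and the common value $\Gamma_1$, exactly as in the scalar case; only the subproblems $(\cMSP(l))$ themselves are vector-valued. This means the combinatorial bookkeeping that underlies each of the three reductions is identical, and it suffices to re-examine why each reduction preserves completeness of the solution set rather than mere optimality.

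First I would handle claim~\ref{lemma_alg_Bertsim_faster_1_sortable_a}. If $\delta_{e_l,i} = \dots = \delta_{e_{l+h},i}$ holds for one objective, objective independence forces $\idelta_{l} = \dots = \idelta_{l+h}$ as vectors, so the defining expressions for $\zBS^{l}, \dots, \zBS^{l+h}$ coincide as functions on $Q$ (the term $\Gamma \circ \idelta$ and the threshold $\{j \leqq \cdot\}$ are unchanged). Hence the problems $(\cMSP(l)), \dots, (\cMSP(l+h))$ are literally the same multi-objective problem and share a complete efficient set, so solving one suffices. For claim~\ref{lemma_alg_Bertsim_faster_1_sortable_b}, I would invoke the completeness argument already established in the proof of Theorem~\ref{theo_alg_Bert_Sim_sortable}: there it is shown that for every $q \in Q$ the index $l$ achieving $\zBS^{l}(q) = \zRC(q)$ lies in $\{\Gamma_1, \dots, |E|+1\}$, because the relevant $l$ is the position of the $\Gamma_1$-th smallest-indexed element of $q$, which is at least $\Gamma_1$. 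Therefore subproblems $\cMSP(1), \dots, \cMSP(\Gamma_1 - 1)$ contribute no robust efficient solution not already recovered, and starting at $\cMSP(\Gamma_1)$ loses nothing.

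Claim~\ref{lemma_alg_Bertsim_faster_1_sortable_c} is the one I expect to be the main obstacle, since it is the only reduction that is not immediate from equality of subproblems or from the Theorem's index analysis. The single-objective justification is that for each $q$ the worst-case value is attained by $\zBS^{l}$ not only at the critical index but also at the following one, so skipping every second subproblem still captures one matching index per solution. To adapt this I would argue objective by objective, but carefully: the index at which $\zBS^{l}_i(q) = \zRC_i(q)$ may \emph{a priori} differ across objectives $i$, whereas completeness with respect to $\zRC$ requires a \emph{single} index $l$ in the solved set $L$ with $\zBS^{l}(q) = \zRC(q)$ in all components at once. The objective-independent order is exactly what rescues this: since the common ordering makes the $\Gamma_1$-th smallest-indexed element of $q$ the same element for every objective, the critical index $l(q)$ (and hence the pair $\{l(q), l(q)+1\}$ on which $\zBS(q) = \zRC(q)$ holds componentwise) is one and the same for all $i$. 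Consequently the scalar ``every second subproblem'' argument lifts verbatim, and at least one of the two consecutive matching indices survives in $L$. Combining the three reductions yields the stated bound $\left\lceil \frac{|E|-\Gamma_1}{2} \right\rceil + 1$, and I would close by noting that Step~\ref{alg_step_determine} realised via Algorithm~\ref{alg_Bert_Sim_alternative_step2} produces exactly this set, so completeness of the output follows from Theorem~\ref{theo_alg_Bert_Sim_sortable} applied to the reduced $L$.
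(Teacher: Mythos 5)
Your proposal is correct and takes essentially the same route as the paper: ties give literally identical subproblems, the start index $\Gamma_1$ comes from the critical-index analysis in the proof of Theorem~\ref{theo_alg_Bert_Sim_sortable}, and the every-second-subproblem reduction rests on the observation that the objective-independent order makes the $\Gamma_1$-th largest-interval element of $q$ the same for all objectives, so equality also holds at the next index componentwise (the paper establishes this last equality $\zBS^l(q)=\zBS^{l+1}(q)$ by an explicit vector computation with the Schur product rather than by citing the scalar result componentwise, but the content is identical). One slip worth noting: in part 1 you assert that a tie $\delta_{e_l,i}=\cdots=\delta_{e_{l+h},i}$ in a \emph{single} objective forces the vector equality $\idelta_l=\cdots=\idelta_{l+h}$ under objective-independent order; that implication is false (take $\delta_{e_1}=(5,3)$, $\delta_{e_2}=(5,2)$, a valid common order with a tie in the first objective only), but it is harmless here because the lemma's hypothesis $\delta_{e_l}=\cdots=\delta_{e_{l+h}}$ is already an equality of vectors, from which the coincidence of $\cMSP(l),\dots,\cMSP(l+h)$ follows directly.
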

\begin{proof}\
    \begin{enumerate}
      \item From $\delta_{e_l} = ... = \delta_{e_{l+h}}$ follows directly $\zBS^l(q) = ... = \zBS^{l+h}(q)$ and therefore $\ZBS^l(q) = ... = \ZBS^{l+h}(q)$.
      \item For any $q \in Q$ with less than $\Gamma_1$ elements we have $\zRC(q) = \zBS^{|E|+1}(q)$. For any $q\in Q$ with at least $\Gamma_1$ elements 
      let $e_l$ be the element with the $\Gamma_1$-th smallest index in $q$. 
      From the proof of Theorem~\ref{theo_alg_Bert_Sim_sortable} we know that $\zRC(q) = \zBS^l(q)$. We further have
      \begin{align*}
	      \zBS^l(q) &=&& \sum_{e\in q} \nome{e} + \Gamma \circ \idelta_l + \sum_{\substack{e_j\in q\\j\leqq l}} (\delta_{e_j} - \idelta_{l})  + \Gamma \circ (\idelta_{l+1} - \idelta_{{l}}) + \Gamma \circ (\idelta_{l} - \idelta_{{l+1}}) \\
	      &=&& \sum_{e\in q} \nome{e} + \Gamma \circ \idelta_{l+1} + \sum_{\substack{e_j\in q\\j\leqq l}} (\delta_{e_j} - \idelta_{{l+1}}) \hspace{1cm} \text{ because } |\{e_j \in q: j\leqq l\}|=\Gamma_i \ \forall \ i\\
	      &=&& \sum_{e\in q} \nome{e} + \Gamma \circ \idelta_{l+1} + \sum_{\substack{e_j\in q\\j\leqq l+1}} (\delta_{e_j} - \idelta_{{l+1}}) = \zBS^{l+1}(q).
      \end{align*}
      Therefore, if we only solve every second subproblem, we will still solve at least one subproblem $\cMSP(l')$ with $\zBS^{l'}(q) = \zRC(q)$ for each $q \in Q$ with at least $\Gamma_1$ elements. It follows, that we only need to solve every second subproblem and in addition the $(|E|+1)$-th subproblem. 
      \item In the proof of Theorem~\ref{theo_alg_Bert_Sim_sortable} we show that for every $q \in Q$ there is an $l \in {\{\Gamma_1,...,|E|+1\} }$ with $\zRC(q) = \zBS^l(q)$,  
      because none of the elements $e_1,...,e_{\Gamma_1-1}$ can be the element with the $\Gamma_1$-th smallest index in $q$. It follows, that we do not need to solve the problems $\cMSP(1),...,\cMSP(\Gamma_1-1)$ to find a complete set of robust efficient solutions. 
    \end{enumerate}
From statement~\ref{lemma_alg_Bertsim_faster_1_sortable_c} we know that at most $|E|+1 - (\Gamma_1 - 1)$ problems need to be solved. From statement~\ref{lemma_alg_Bertsim_faster_1_sortable_b} it follows that of these problems only the last one and every second of the other ones must be solved, this leads to at most \[ \left\lfloor \frac{|E| + 1 - (\Gamma_1 - 1) - 1}{2} \right\rfloor + 1 =\left\lfloor \frac{|E| - \Gamma_1 +1 }{2} \right\rfloor + 1= {\left\lceil \frac{|E|-\Gamma_1}{2} \right\rceil +1} \] subproblems.
\end{proof}
The result of Lemma~\ref{lemma_alg_Bertsim_faster_2} is valid for multi-objective problems with objective independent element order as well. However, to be able to skip the solving of problem $\cMSP(l)$ none of the sets in $\ZBS^{\tilde{l}}$ is allowed to contain any element $e_j$ with $j<l$. Therefore we replace Step~\ref{alg_step_solve}  with  Algorithm~\ref{alg_Bert_Sim_alternative_step3_mo1}.
\begin{algorithm}
\caption{Improved step~\ref{alg_step_solve} of Algorithm~\ref{alg_Bert_Sim_sortable}: Solve subproblems (with solution checking).}\label{alg_Bert_Sim_alternative_step3_mo1}
 \begin{algorithmic}[1]
 \Require $I=(E, Q,\nom, \delta, \Gamma)$ with $E$ ordered w.r.t. $\delta_i$, $\idelta$, an index set of subproblems $L$
\Ensure a set of solutions $\cup_{l \in L} \ZBS^l$ 
 \State $\tilde{l} := 0$
    \ForAll{$l\in L$ in increasing order}
	\If{$\tilde{l} = 0$ or any of the sets in $\ZBS^{\tilde{l}}$ contains any element in $\{e_1,...,e_{l-1}\}$}
	  \State Find a complete set of efficient solutions $\ZBS^l$ for $(\cMSP(l))$
	\Else \phantom{i}$\ZBS^l:= \ZBS^{\tilde{l}}$
	\EndIf
	\State $\tilde{l} := l$
     \EndFor
 \end{algorithmic}
\end{algorithm} 
\begin{lemma}\label{lemma_alg_Bertsim_faster_2_sortable}
 Let $1\leqq \tilde{l} \leqq l \leqq|E|+1$ and let $G^{\tilde{l}}$ be a complete set of efficient solutions for $\cMSP(\tilde{l})$. If none of the sets in $G^{\tilde{l}}$ contains any of the elements $e_1,...,e_{l-1}$, then $G^{\tilde{l}}$ is a complete set of efficient solutions for $\cMSP(l)$.
\end{lemma}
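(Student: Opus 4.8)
The plan is to lift the single-objective argument of Lemma~\ref{lemma_alg_Bertsim_faster_2} to vectors. I would first observe that $\cMSP(l)$ is an ordinary deterministic multi-objective problem in which the cost vector of a set $q$ is $\sum_{e\in q}\ce{e}^l$, and that in each objective the coefficients $\ce{e}^l$ are given exactly by the single-objective formula~(\ref{bertsim_sp_lenghts}). Consequently the coefficient inequalities established in the proof of Lemma~\ref{lemma_alg_Bertsim_faster_2} hold entry-wise: $\ce{e}^{\tilde{l}}\leqq\ce{e}^l$ for every element of index $<l$, and $\ce{e}^{\tilde{l}}=\ce{e}^l$ for every element of index $\geqq l$. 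Summing over $q$ yields the two facts I would use throughout: $\zBS^{\tilde{l}}(q)\leqq\zBS^l(q)$ for every $q\in Q$, with equality $\zBS^{\tilde{l}}(q)=\zBS^l(q)$ whenever $q$ avoids $e_1,\ldots,e_{l-1}$.

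By Definition~\ref{def_equivalent_complete} I would then verify two things for $\cMSP(l)$. First, efficiency of every $\hat{q}\in G^{\tilde{l}}$: since $\hat{q}$ avoids $e_1,\ldots,e_{l-1}$ its cost vectors coincide, $\zBS^l(\hat{q})=\zBS^{\tilde{l}}(\hat{q})$, and if some $q$ dominated $\hat{q}$ in $\cMSP(l)$, the chain
\[ \zBS^{\tilde{l}}(q)\;\leqq\;\zBS^l(q)\;\leq\;\zBS^l(\hat{q})\;=\;\zBS^{\tilde{l}}(\hat{q}) \]
would show that $q$ dominates $\hat{q}$ already in $\cMSP(\tilde{l})$, contradicting its efficiency there. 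Second, completeness: for an arbitrary $q\in Q$, completeness of $G^{\tilde{l}}$ for $\cMSP(\tilde{l})$ supplies some $\hat{q}\in G^{\tilde{l}}$ with $\zBS^{\tilde{l}}(\hat{q})\leqq\zBS^{\tilde{l}}(q)$, and then $\zBS^l(\hat{q})=\zBS^{\tilde{l}}(\hat{q})\leqq\zBS^{\tilde{l}}(q)\leqq\zBS^l(q)$ shows that $\hat{q}$ dominates $q$ or is equivalent to it with respect to $\zBS^l$.

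The hard part, such as it is, lies in the efficiency step: I must make sure the dominance relation $\leq$ (which carries the extra requirement that the two vectors differ) survives being chained with the weak inequalities $\leqq$. Concretely I would single out a coordinate $j$ with $\zBS^l_j(q)<\zBS^l_j(\hat{q})$ and check that $\zBS^{\tilde{l}}_j(q)\leqq\zBS^l_j(q)<\zBS^l_j(\hat{q})=\zBS^{\tilde{l}}_j(\hat{q})$ preserves this strict decrease, so that $\zBS^{\tilde{l}}(q)\leq\zBS^{\tilde{l}}(\hat{q})$ indeed holds in the strong sense. This is the one place where the distinction between $\leq$ and $\leqq$ must be tracked explicitly; the remainder follows immediately from the coefficient monotonicity already recorded for the single-objective lemma.
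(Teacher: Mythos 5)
Your proof is correct and takes essentially the same route as the paper's: both reduce $\cMSP(l)$ to a deterministic multi-objective problem with costs $\ce{e}^l$, invoke the entry-wise coefficient relations $\cei{e_j}{i}^{\tilde{l}} \leqq \cei{e_j}{i}^l$ for $j<l$ and $\cei{e_j}{i}^{\tilde{l}} = \cei{e_j}{i}^l$ for $j\geqq l$, and chain these inequalities to transfer efficiency and completeness of $G^{\tilde{l}}$ from $\cMSP(\tilde{l})$ to $\cMSP(l)$. Your explicit verification that the strict coordinate survives the dominance chain is a detail the paper leaves implicit (and you cite the correct source, Lemma~\ref{lemma_alg_Bertsim_faster_2}, for the coefficient relations, where the paper's reference to Lemma~\ref{lemma_alg_Bertsim_faster_1} appears to be a typo), but the substance is identical.
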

\begin{proof}
   A complete set of solutions for $\cMSP(l)$ can be found by solving a deterministic multi-objective problem with costs $\ce{e}^l:=(\cei{e}{1}^l, ..., \cei{e}{k}^l)$:
  \[ \cei{e_j}{i}^l := \begin{cases}
         \nomei{e_j}{i} + (\delta_{e_j,i} - \idelta_{l,i}) &\text{ for } j < l \\
         \nomei{e_j}{i} &\text{ for } j \geqq l.
        \end{cases}                                                                                                                                                                     
     \] 
     From the proof of Lemma~\ref{lemma_alg_Bertsim_faster_1} we know that $\cei{e_j}{i}^{\tilde{l}} \leqq \cei{e_j}{i}^l \ \forall \ e_j: j < l$ and $\cei{e_j}{i}^{\tilde{l}} = \cei{e_j}{i}^l \ \forall \ e_j: j \geqq l$. It follows that any $q\in \ZBS^{\tilde{l}}$ not containing any element in $\{e_1,...,e_{l-1}\}$ is also efficient w.r.t.~$\cost^l$. If none of the sets in $\ZBS^{\tilde{l}}$ contains any element in $\{e_1,...,e_{l-1}\}$, then for any $q'\notin \ZBS^{\tilde{l}}$ exists a $q \in \ZBS^{\tilde{l}}$ with 
     \[\sum_{e\in q} \ce{e}^l = \sum_{e\in q} \ce{e}^{\tilde{l}} \leqq \sum_{e\in q'} \ce{e}^{\tilde{l}} \leqq \sum_{e\in q'} \ce{e}^l \] 
     and $q'$ is either dominated w.r.t.~$\cost^l$ or has an equivalent solution in $ \ZBS^{\tilde{l}}$. Therefore, $\ZBS^{\tilde{l}}$ is a complete set of solutions for $\cMSP(l)$.
\end{proof}
\begin{cor}
 If we replace in Algorithm~\ref{alg_Bert_Sim_sortable} Step~\ref{alg_step_determine} with Algorithm~\ref{alg_Bert_Sim_alternative_step2} and Step~\ref{alg_step_solve} with Algorithm~\ref{alg_Bert_Sim_alternative_step3_mo1}, it finds a complete set of robust efficient solutions for multi-objective cardinality-constrained uncertain combinatorial optimization problems with objective independent element order, solving at most ${\left\lceil \frac{|E|-\Gamma_1}{2} \right\rceil +1}$ deterministic subproblems.
\end{cor}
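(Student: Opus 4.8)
The plan is to assemble the three ingredients already established for the objective independent case and to verify that the two modifications are mutually compatible. Correctness of the unmodified Algorithm~\ref{alg_Bert_Sim_sortable} is Theorem~\ref{theo_alg_Bert_Sim_sortable}; Lemma~\ref{lemma_alg_Bertsim_faster_1_sortable} reduces the index set $L$ (which is exactly what Algorithm~\ref{alg_Bert_Sim_alternative_step2} computes); and Lemma~\ref{lemma_alg_Bertsim_faster_2_sortable} justifies the solution checking performed by Algorithm~\ref{alg_Bert_Sim_alternative_step3_mo1}. Thus there are two things to show: first, that the modified algorithm still outputs a complete set of robust efficient solutions, and second, that it solves at most $\left\lceil \frac{|E|-\Gamma_1}{2}\right\rceil+1$ deterministic subproblems.

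For the correctness part I would first argue that the reduced index set is still adequate. Inspecting the proof of Theorem~\ref{theo_alg_Bert_Sim_sortable}, the only property of $L$ that is actually used is that for every $q\in Q$ there is some $l\in L$ with $\zBS^l(q)=\zRC(q)$. Parts~\ref{lemma_alg_Bertsim_faster_1_sortable_a}, \ref{lemma_alg_Bertsim_faster_1_sortable_c} and \ref{lemma_alg_Bertsim_faster_1_sortable_b} of Lemma~\ref{lemma_alg_Bertsim_faster_1_sortable} show precisely that the set $L$ produced by Algorithm~\ref{alg_Bert_Sim_alternative_step2} retains this property: ties in the interval lengths give identical subproblems, the equality index can always be shifted forward by one, and no relevant index lies below $\Gamma_1$. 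Hence $\bigcup_{l\in L}\ZBS^l$ still contains, up to equivalence, every robust efficient solution. I would then handle solution checking by a short induction over $l\in L$ in increasing order, maintaining the invariant that each $\ZBS^l$ is a complete set of efficient solutions for $\cMSP(l)$: whenever Algorithm~\ref{alg_Bert_Sim_alternative_step3_mo1} reuses $\ZBS^{\tilde l}$, its reuse condition is exactly the hypothesis of Lemma~\ref{lemma_alg_Bertsim_faster_2_sortable}, so $\ZBS^{\tilde l}$ is indeed complete for $\cMSP(l)$ as well. Consequently the modified algorithm produces the same collection $\bigcup_{l\in L}\ZBS^l$ as Algorithm~\ref{alg_Bert_Sim_sortable} restricted to $L$, and the final comparison step filters it to a complete set of robust efficient solutions exactly as in Theorem~\ref{theo_alg_Bert_Sim_sortable}.

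For the subproblem count the bound is immediate: the number of deterministic subproblems actually solved is at most $|L|$, since solution checking only ever replaces a solve by a reuse and never adds a solve, and Lemma~\ref{lemma_alg_Bertsim_faster_1_sortable} bounds $|L|$ by $\left\lceil \frac{|E|-\Gamma_1}{2}\right\rceil+1$.

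The only genuinely delicate point, and the step I would expect to need the most care, is the induction used in the correctness argument. The index $\tilde l$ in Algorithm~\ref{alg_Bert_Sim_alternative_step3_mo1} is the previous element of the non-consecutive reduced set $L$, and $\ZBS^{\tilde l}$ may itself have been obtained by reuse rather than by a fresh solve. One must check that the invariant that $\ZBS^{\tilde l}$ is complete for $\cMSP(\tilde l)$ is nonetheless preserved along the loop, so that Lemma~\ref{lemma_alg_Bertsim_faster_2_sortable} can be invoked with the general indices $\tilde l\leqq l$ rather than only with consecutive ones. Everything else is direct bookkeeping built on top of the previously proved lemmas.
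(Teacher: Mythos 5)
Your proposal is correct and follows essentially the same route as the paper: the paper states this corollary without an explicit proof, treating it as an immediate consequence of Theorem~\ref{theo_alg_Bert_Sim_sortable}, Lemma~\ref{lemma_alg_Bertsim_faster_1_sortable} (which also yields the bound on the number of subproblems), and Lemma~\ref{lemma_alg_Bertsim_faster_2_sortable}, exactly the three ingredients you assemble. Your explicit induction maintaining the invariant that each $\ZBS^{l}$, whether freshly solved or reused, is a complete set of efficient solutions for $\cMSP(l)$ — needed because $\tilde{l}$ may itself stem from a reuse — is precisely the bookkeeping the paper leaves implicit, and it goes through because Lemma~\ref{lemma_alg_Bertsim_faster_2_sortable} is stated for arbitrary $\tilde{l} \leqq l$, not just consecutive indices.
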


\subsubsection{The Deterministic Subproblems Algorithm in the general multi-objective case}\label{subsection_detsubproblems_multi-objective}

In general, the sorting of the elements by interval lengths results in a different order for each objective. An element that has the $\Gamma$-th longest interval in $q$ for all objectives is not likely to exist. To ensure that the worst case vector of $q$ equals the objective vector of a subproblem, we have to iterate through all elements for each objective independently and consider all possible combinations.
\\
Let $E^i_j$ be a set of the $j$ elements with the largest intervals for the $i$-th objective, i.e., $|E^i_j| = j$ and $\delta_{e,i} \geqq \delta_{e',i} \ \forall \ e \in E^i_j, e' \in E\setminus E^i_j$ and let $\idelta_j^i := \min_{e\in E^i_j} \delta_{e,i}$. \\
 We define $\idelta_{|E|+1}^i:= 0 \ \forall \ i$. For each $l=(l_1,...,l_k) \in\{1,...,|E|+1\}\times ... \times\{1,...,|E|+1\}$  
we define the problem
\begin{align*}
 (\cGMSP(l))\ \min_{q\in Q} \zBS^l(q) :=  \begin{pmatrix} \sum_{e \in q} \nomei{e}{1} + \Gamma{_1} \cdot \idelta_{l_1}^1 + \sum_{\substack{e \in q \cap  E^1_{l_1}}} (\delta_{e,1} - \idelta_{l_1}^1) \\
														\vdots \\
							\sum_{e \in q} \nomei{e}{k} + \Gamma{_k} \cdot \idelta_{l_k}^k + \sum_{\substack{e \in q \cap  E^k_{l_k}}} (\idelta_{e,k} - \idelta_{l_k}^k)
                                                                   \end{pmatrix}.
\end{align*}
As before, each of these $(|E|+1)^k$ problems can be solved as a deterministic multi-objective problem of the same kind.\\
Algorithm~\ref{alg_Bert_Sim_general} preserves the basic structure of \DSA: First, the elements are sorted w.r.t.~$\delta_{e,i}$ for each $i=1,...,k$. Instead of changing the indices, we store the set $E^i_j$ of the first $j$ elements for all $j=1,...,|E|$, because the order of the elements depends on the objective. Then the set $L$ is determined, which contains vectors instead of scalar values. For each element in $L$ the subproblem defined above is solved and their solutions are compared to obtain the robust efficient solutions.
\begin{algorithm} 
\caption{\DSA{} for general multi-objective instances}\label{alg_Bert_Sim_general}
	\begin{algorithmic}[1]
	\Require an instance $I=(E, Q,\nom, \delta, \Gamma)$ of a multi-objective cardinality-constrained uncertain combinatorial optimization problem 
	\Ensure a complete set of robust efficient solutions for $I$
	 \State For $ i := 1,...,k$: 
	 Sort $E$ w.r.t.~$\delta_{e,i}$ descending and save the first $j$ elements in $E^i_j$  for $j=1,...,|E|$. Set $E^i_{|E|+1}:= E$. Set $\idelta_{j}^i:=\min_{e\in E^i_j} \delta_e \ \forall \ j=1,...,|E|$ and $\idelta_{|E|+1}^i:= 0$.
	 \State Determine $L=L_1 \times L_2 \times... \times L_k$: $L_i:= \{1,...,|E|+1\}\ \forall \ i=1,...,k$.
	 \State For all $l \in L$ find a complete set of efficient solutions $\ZBS^l$ for $(\cGMSP(l))$.
	 \State Compare the objective vectors $\zRC(q)$ of all solutions in $\cup_{l \in L} \ZBS^l $. The solutions with non-dominated objective vectors form a complete set of robust efficient solutions.
	\end{algorithmic}
\end{algorithm}

\begin{theo} \label{theo_alg_Bert_Sim_general}
 Algorithm~\ref{alg_Bert_Sim_general} finds a complete set of robust efficient solutions for multi-objective cardinality-constrained uncertain combinatorial optimization problems.
\end{theo}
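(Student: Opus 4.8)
The plan is to mirror the proof of Theorem~\ref{theo_alg_Bert_Sim_sortable}, establishing the same two properties but now accounting for the fact that the element ordering differs across objectives. Concretely, I would first show that $\zBS^l$ never underestimates $\zRC$ in any component, i.e.\ $\zRC_i(q) \leqq \zBS^l_i(q)$ for all $q \in Q$, all $l \in L$ and all $i=1,\ldots,k$; and second, that for every $q \in Q$ there exists an index vector $l \in L$ with $\zBS^l(q) = \zRC(q)$ in all components simultaneously. Given these two properties, the conclusion follows along the same lines as in Theorem~\ref{theo_alg_Bert_Sim_sortable}.

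For the first property I would fix an objective $i$ and observe that, because $\idelta^i_{l_i} = \min_{e \in E^i_{l_i}} \delta_{e,i}$ and $E^i_{l_i}$ collects the $l_i$ elements with the largest intervals for objective $i$, we have $\sum_{e \in q \cap E^i_{l_i}}(\delta_{e,i} - \idelta^i_{l_i}) = \sum_{e \in q}\max\{0, \delta_{e,i} - \idelta^i_{l_i}\}$. The $i$-th component of $\zBS^l$ then has exactly the single-objective form, so the chain of inequalities from the proof of Theorem~\ref{theo_alg_Bert_Sim_sortable} applies unchanged to each component separately, yielding $\zBS^l_i(q) \geqq \zRC_i(q)$.

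The crux, and the only real departure from the objective-independent case, is the tightness property. The key observation is that the $i$-th component of $\zBS^l(q)$ depends only on $l_i$, so the components decouple and each $l_i$ can be chosen independently to make component $i$ exact. For a fixed objective $i$: if $|q| < \Gamma_i$, taking $l_i = |E|+1$ gives $\idelta^i_{l_i} = 0$ and $q \subseteq E^i_{|E|+1} = E$, so $\zBS^l_i(q) = \zRC_i(q)$; if $|q| \geqq \Gamma_i$, I would choose $l_i$ to be the smallest index for which $|q \cap E^i_{l_i}| = \Gamma_i$, so that $q \cap E^i_{l_i}$ is precisely the set of the $\Gamma_i$ largest-interval elements of $q$ for objective $i$. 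A short computation then shows that the $\Gamma_i \cdot \idelta^i_{l_i}$ terms cancel and $\zBS^l_i(q) = \sum_{e\in q}\nomei{e}{i} + \sum_{j=1}^{\Gamma_i}\delta_{e_{a_j},i} = \zRC_i(q)$. Because $L = L_1 \times \cdots \times L_k$ contains every combination of such indices, the vector $l = (l_1,\ldots,l_k)$ assembled from these independent choices lies in $L$ and satisfies $\zBS^l(q) = \zRC(q)$.

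Finally, I would assemble the conclusion as before: if $q$ is robust efficient, then $\zRC(q) = \zBS^l(q)$ for the $l$ just constructed, and since no $q' \in Q$ satisfies $\zRC(q') \leq \zRC(q)$, property~1 ($\zRC(q') \leqq \zBS^l(q')$) gives that no $q' \in Q$ satisfies $\zBS^l(q') \leq \zBS^l(q)$; hence $q$ or an equivalent solution appears in $\ZBS^l$. The comparison step then retains exactly the solutions with non-dominated $\zRC$-vectors, so the output is a complete set of robust efficient solutions. The main obstacle is purely conceptual, namely recognizing the per-component separability that justifies the independent choice of the $l_i$ and thereby the $(|E|+1)^k$ product structure of $L$; once that is in hand, every individual estimate reduces to a component-wise instance of the single-objective argument.
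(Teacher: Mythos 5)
Your proposal is correct and follows essentially the same route as the paper's proof: both establish that $\zBS^l$ never underestimates $\zRC$ componentwise, that for each objective $i$ an index $l_i$ can be chosen independently making the $i$-th component exact, and that the product structure $L = L_1 \times \cdots \times L_k$ then yields a single $l$ with $\zBS^l(q) = \zRC(q)$, from which the conclusion follows as in Theorem~\ref{theo_alg_Bert_Sim_sortable}. The paper's proof is merely terser, deferring these details to the phrase ``analogously to the proof of Theorem~\ref{theo_alg_Bert_Sim_sortable}''; your explicit per-component decoupling argument and choice of $l_i$ are exactly what that analogy expands to.
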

\begin{proof} 
Analogously to the proof of Theorem~\ref{theo_alg_Bert_Sim_sortable} it can be shown that $\zRC(q) \leqq \zBS^l(q) \ \forall \ q\in Q$ and that for every $q\in Q$ and $i\in \{1,...,k\}$ there exists an $l$ with $l_i \in \{1,...,|E|+1\}$ and $\zRC_i(q) = \zBS^l_i(q)$. Since we consider every combination of the values of $l_1,...,l_k$ it follows that for every $q \in Q$ it exists a problem $\cGMSP(l)$ with $\zRC_i(q) = \zBS^l_i(q) \ \forall \ i = 1,...,k$. It follows that a complete set of robust efficient solutions is returned.
\end{proof}
As before, we can reduce the number of subproblems to be solved. The proof of Lemma~\ref{lemma_alg_Bertsim_faster_1_sortable} still holds for each objective independently. Therefore, we can apply the reduction on each objective  (Algorithm~\ref{alg_Bert_Sim_alternative_step2_mo2}) and obtain $L:= L_1\times L_2\times...\times L_k$ with $|L_i| = \left\lceil \frac{|E|-\Gamma_i}{2} \right\rceil +1$ .
 \begin{algorithm} 
 \caption{Improved Step~\ref{alg_step_determine} of Algorithm~\ref{alg_Bert_Sim_general}: Determine the subproblems to be solved. }\label{alg_Bert_Sim_alternative_step2_mo2}
 \begin{algorithmic}[1]
   \Require an edge set $E$ with cost interval lengths $\delta$, a $k$-dimensional vector $\Gamma$ with $\Gamma_i \leqq |E| \ \forall i$
  \Ensure an index set $L$ of subproblems to be solved in Algorithm~\ref{alg_Bert_Sim_general}
    \For{$i=1,...,k$}
  	 \State Determine $L_i$ as in Algorithm~\ref{alg_Bert_Sim_alternative_step2} with $\Gamma = \Gamma_i, \delta_e = \delta_{e,i}$. 
    \EndFor
 \end{algorithmic}
 \end{algorithm}
\\Here again, we can use solution checking, i.e., skip some additional subproblems, depending on the solutions found so far.

\begin{lemma}\label{lemma_alg_Bertsim_faster_2_general_a}
 Let $l,\tilde{l} \in \Z^k$ be given with  $l\le \tilde{l}$ and let $I$ be the set of indices $i$ with $l_i<l_{\tilde{i}}$. Let $\ZBS^{\tilde{l}}$ be a complete set of efficient solutions for $\cGMSP(\tilde{l})$. If for all $i$ for which $l_i<l_{\tilde{i}}$, none of the sets in $\ZBS^{\tilde{l}}$ contains an element in $\cup_{i \in I} E^i_{l_i}$, then $\ZBS^{\tilde{l}}$ is a complete set of efficient solutions for $\cGMSP(l)$. 
\end{lemma}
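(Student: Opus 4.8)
The plan is to follow the proof of Lemma~\ref{lemma_alg_Bertsim_faster_2_sortable}, but to run the comparison separately for each objective, since in the general case every objective $i$ induces its own ordering of the elements. First I would reduce both subproblems to deterministic multi-objective problems: a complete set of efficient solutions for $\cGMSP(l)$ can be computed from the element costs $\cei{e}{i}^l := \nomei{e}{i} + (\delta_{e,i}-\idelta^i_{l_i})$ for $e\in E^i_{l_i}$ and $\cei{e}{i}^l := \nomei{e}{i}$ for $e\notin E^i_{l_i}$, because the remaining term $\Gamma_i\cdot\idelta^i_{l_i}$ of $\zBS^l_i$ is solution-independent and therefore irrelevant for domination. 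Hence efficiency (and completeness) for $\cGMSP(l)$ is the same as with respect to the costs $\cei{e}{i}^l$, and the same holds for $\tilde l$.

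Next I would compare $\cei{e}{i}^l$ and $\cei{e}{i}^{\tilde l}$ objective by objective. For every $i\notin I$ we have $l_i=\tilde l_i$, so $E^i_{l_i}=E^i_{\tilde l_i}$, $\idelta^i_{l_i}=\idelta^i_{\tilde l_i}$, and the two $i$-th costs coincide for every element. For $i\in I$ I would apply the single-objective cost comparison from the proof of Lemma~\ref{lemma_alg_Bertsim_faster_2} to the $i$-th ordering to obtain two facts: the costs $\cei{e}{i}^l$ and $\cei{e}{i}^{\tilde l}$ agree on all elements outside the top-set $E^i_{l_i}$, and on all of $Q$ the $i$-th objective value is monotone in the index. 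The purpose of this step is to confine all elements that can change any objective value to the union $\cup_{i\in I} E^i_{l_i}$.

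Then I would invoke the hypothesis. Since no set in $\ZBS^{\tilde l}$ meets $\cup_{i\in I} E^i_{l_i}$, every $q\in\ZBS^{\tilde l}$ satisfies $\sum_{e\in q}\cei{e}{i}^l=\sum_{e\in q}\cei{e}{i}^{\tilde l}$ for all $i$, i.e.\ its objective vector is frozen under the change of index. Now take an arbitrary $q'\in Q$. Completeness of $\ZBS^{\tilde l}$ for $\cGMSP(\tilde l)$ supplies some $q\in\ZBS^{\tilde l}$ that dominates or is equivalent to $q'$ with respect to the costs $\cei{e}{i}^{\tilde l}$. Because $q$'s objective vector does not change while $q'$'s can only move monotonically in the index, the relation $\sum_{e\in q}\cei{e}{i}^{\tilde l}\leqq\sum_{e\in q'}\cei{e}{i}^{\tilde l}$ transports coordinate-wise to $\sum_{e\in q}\cei{e}{i}^l\leqq\sum_{e\in q'}\cei{e}{i}^l$. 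Thus $q$ dominates or is equivalent to $q'$ for $\cGMSP(l)$, which shows that $\ZBS^{\tilde l}$ is a complete set of efficient solutions for $\cGMSP(l)$.

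The hard part will be the bookkeeping forced by the per-objective orderings: because the sorting of $E$ differs across objectives, the elements responsible for a change of the $i$-th objective value must be tracked separately for each $i\in I$ and only then collected into the single union $\cup_{i\in I} E^i_{l_i}$. I would have to verify that this union really captures every element that can alter any objective value, so that freezing the objective vectors of the sets in $\ZBS^{\tilde l}$ is justified, and I would treat ties in the interval lengths carefully, so that the split of $E$ into unaffected elements and elements in the top-sets is unambiguous. Once the comparison of Lemma~\ref{lemma_alg_Bertsim_faster_2_sortable} is applied coordinate-wise, the concluding chain of inequalities is identical to the one there.
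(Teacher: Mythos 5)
Your proof is correct and follows essentially the same route as the paper's: drop the solution-independent terms $\Gamma_i \cdot \idelta^i_{l_i}$ to obtain deterministic costs $\cost^l$, establish per objective that the costs agree outside $\cup_{i\in I} E^i_{l_i}$ and are otherwise monotone in the index, and then run the freeze-plus-monotonicity chain of inequalities, which is exactly what the paper does before invoking the argument of Lemma~\ref{lemma_alg_Bertsim_faster_2_sortable} ``analogously''. Note that, just like the paper's own proof, your monotonicity direction implicitly reads the hypothesis as $\tilde{l} \leqq l$ (the already-solved subproblem has the smaller indices, matching Lemma~\ref{lemma_alg_Bertsim_faster_2} and the use in Algorithm~\ref{alg_Bert_Sim_alternative_step3_mo2}) rather than the inequality $l \le \tilde{l}$ as literally printed in the statement.
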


\begin{proof}
 Since $\Gamma_i \cdot \idelta^{l_i}_i$ are solution independent constants, the minimization problem to be solved is a deterministic multi-objective problem with costs $\ce{e}^l = (\cei{e}{1}^l,...,\cei{e}{k}^l):$
 \begin{align*}
  \cei{e}{i}^l := \begin{cases}
         \nomei{e}{i} + (\delta_{e,i} - \idelta^i_{l_i} 
         )&\text{ for }e \in E^i_{l_i} \\
         \nomei{e}{i} &\text{ else}.
        \end{cases}
 \end{align*}
 Therefore, 
 \begin{align*}
  &\cei{e}{i}^l = \cei{e}{i}^{\tilde{l}} \ \forall \ i \text{ with } l_i=\tilde{l}_i, \ \forall \ e \in E \\
  &\cei{e}{i}^l = \cei{e}{i}^{\tilde{l}} \ \forall \ i \text{ with } l_i<\tilde{l}_i, \ \forall \ e \in E\setminus E^i_{l_i} \\
  &\cei{e}{i}^{\tilde{l}} \leqq \cei{e}{i}^l \ \forall \ i, \  \forall \ e \in E.
  \end{align*}
  Hence, for all objective functions $i$ we have $\cei{e}{i}^l = \cei{e}{i}^{\tilde{l}}$ for all elements that are contained in any set in $\ZBS^{\tilde{l}}$, and $\cei{e}{i}^{\tilde{l}} \leqq \cei{e}{i}^l$ for all elements that are not contained in a set in $\ZBS^{\tilde{l}}$.   Analogously to the proof of Lemma~\ref{lemma_alg_Bertsim_faster_2_sortable}, it follows that $\ZBS^{\tilde{l}}$ is a complete set of efficient solutions for $\cGMSP(l)$ if no solution in $\ZBS^{\tilde{l}}$ contains  any element in $\cup_{i \in I} E^i_{l_i}$. \\
\end{proof}

A fast way to use this result is to replace Step~\ref{alg_step_solve} of Algorithm~\ref{alg_Bert_Sim_general} with Algorithm~\ref{alg_Bert_Sim_alternative_step3_mo2}.

 \begin{algorithm}
\caption{Improved step~\ref{alg_step_solve} of Algorithm~\ref{alg_Bert_Sim_general}: Solve subproblems (with solution checking).
}\label{alg_Bert_Sim_alternative_step3_mo2}
 \begin{algorithmic}[1]
  \Require an instance $I=(E, Q,\nom, \delta, \Gamma)$, $\idelta$, edge sets $E^i_{j} \ \forall \ i,j$, an index set $L$ of subproblems
  \Ensure a set of solutions $\cup_{l \in L} \ZBS^l$ 
 \State $\tilde{l}^1 := (0,...,0) $
 \State $h := 1$
    \ForAll{$l_1\in L_1$ in increasing order}
      \ForAll{$l_2\in L_2$ in increasing order}
      \State ...
	\ForAll{$l_k\in L_k$ in increasing order}
	    \State $l:=(l_1,...,l_k)$
	    \If{$\tilde{l}^h = (0,...,0)$ or any of the sets in $\ZBS^{\tilde{l}^h}$ contains any element in $E^h_{l_h}$}\label{alg_Bert_Sim_general_check}
	      \State Find a complete set of efficient solutions $\ZBS^l$ for $(\cGMSP(l))$.
	    \Else
	      \ $\ZBS^{l} := \ZBS^{\tilde{l}^h}$
	    \EndIf
	    \For{$i=h,...,k$} \label{alg_Bert_Sim_general_tilde2_for}
		\State ${\tilde{l}^i} := l$ \label{alg_Bert_Sim_general_tilde2}
	    \EndFor
	    \State $h:= k$\label{alg_Bert_Sim_general_h}
	\EndFor
	\State ...
	\State $h:=2$
      \EndFor
      \State $h:=1$
     \EndFor
 \end{algorithmic}
\end{algorithm} 

\begin{lemma}\label{lemma_alg_Bertsim_faster_2_general_b}
 In Line~\ref{alg_Bert_Sim_general_check} of Algorithm~\ref{alg_Bert_Sim_alternative_step3_mo2}, if $\tilde{l}^h \neq (0,...,0)$, then $1 \leqq \tilde{l}_h < l_h \leqq |E|+1$  and $\tilde{l}_i = l_i \ \forall \ i=1,...,k, i \neq h$.
\end{lemma}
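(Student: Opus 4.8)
The statement is a loop invariant of Algorithm~\ref{alg_Bert_Sim_alternative_step3_mo2}, so the plan is to prove it by induction over the iterations of its nested loops. First I would fix notation: let $l^{(1)},l^{(2)},\dots$ denote the vectors $l=(l_1,\dots,l_k)$ in the order in which they reach the check in Line~\ref{alg_Bert_Sim_general_check}, which is precisely the lexicographic enumeration of $L=L_1\times\dots\times L_k$ with $l_1$ most significant. For $m\geqq 2$ let $j(m)$ be the \emph{carry position} of the step from $l^{(m-1)}$ to $l^{(m)}$, i.e.\ the unique index with $l^{(m)}_i=l^{(m-1)}_i$ for $i<j(m)$, $l^{(m)}_{j(m)}>l^{(m-1)}_{j(m)}$ and $l^{(m)}_i=\min L_i$ for $i>j(m)$, and set $j(1):=1$. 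Since every component of a processed index lies in $L_i\subseteq\{1,\dots,|E|+1\}$, each $\tilde{l}^g$ differs from $(0,\dots,0)$ as soon as it is written once, so the hypothesis $\tilde{l}^h\neq(0,\dots,0)$ just excludes the first iteration $m=1$. Hence it remains to show that at the check of every iteration $m\geqq 2$ we have $h=j(m)$ and that $\tilde{l}^h$ agrees with $l^{(m)}$ in every coordinate except the $h$-th, where it is strictly smaller.

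First I would establish the identity $h=j(m)$ at the check of iteration $m$ by reading it off the control flow. The innermost loop ends its body with $h:=k$, and for $g<k$ the body of loop $g$ ends with $h:=g$. If the lexicographic successor only increments $l_k$ (carry position $k$), the check is reached directly after $h:=k$, so $h=k$. If the step carries up to a position $j<k$, the loops $k,k-1,\dots,j+1$ terminate one after another and execute $h:=k-1,\dots,h:=j$ at the ends of their bodies; then loop $j$ advances and we descend through the freshly reset inner loops to the check without touching $h$ again, so $h=j$. For $m=1$ the initialization gives $h=1=j(1)$.

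Next I would prove an invariant for the whole stored family: after the update block of iteration $m$, which runs $\tilde{l}^i:=l^{(m)}$ for $i=h,\dots,k$, we have for every level $g$
\[ \tilde{l}^g=l^{(\mu(g,m))},\qquad \mu(g,m):=\max\{m''\leqq m:\ j(m'')\leqq g\}. \]
This follows by induction on $m$: using $h=j(m)$, the update overwrites exactly the levels $g\geqq j(m)$ with $l^{(m)}$ (for which $\mu(g,m)=m$) and leaves the levels $g<j(m)$ untouched (for which $\mu(g,m)=\mu(g,m-1)$). Combining the two steps then finishes the proof. At the check of iteration $m\geqq 2$ the family still holds its values from the end of iteration $m-1$, so with $h=j(m)=:j$ we get $\tilde{l}^h=l^{(m^{*})}$ where $m^{*}=\mu(j,m-1)$. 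By the maximality of $m^{*}$, every step strictly between $m^{*}$ and $m$ has carry position larger than $j$ and therefore fixes the coordinates $1,\dots,j$, whereas the step into $m$ has carry position exactly $j$; chaining these equalities yields $l^{(m^{*})}_i=l^{(m)}_i$ for $i<j$ together with $l^{(m^{*})}_j=l^{(m-1)}_j<l^{(m)}_j$. Finally, $j(m^{*})\leqq j$ forces every coordinate of $l^{(m^{*})}$ with index larger than $j(m^{*})$, in particular the coordinates $j+1,\dots,k$, to equal its minimum, exactly as in $l^{(m)}$. Thus $\tilde{l}^h$ and $l^{(m)}$ coincide outside coordinate $h$ and satisfy $1\leqq\tilde{l}_h<l_h\leqq|E|+1$, which is the claim and supplies the single-coordinate difference needed to invoke Lemma~\ref{lemma_alg_Bertsim_faster_2_general_a} with $I=\{h\}$.

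I expect the main obstacle to be the bookkeeping rather than any deep argument: fixing the value of $h$ at the check means carefully following the cascade of loop-exit assignments $h:=k-1,\dots,h:=j$ that fires during a multi-level carry, and the family invariant must be phrased so that it survives both the partial overwrite over $i=h,\dots,k$ and the lower levels left unchanged. Once $h=j(m)$ and the formula for $\mu$ are in place, deducing that $\tilde{l}^h$ and $l^{(m)}$ differ in exactly one coordinate is routine.
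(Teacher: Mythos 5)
Your proof is correct and follows essentially the same route as the paper's: both identify the value of $h$ at the check with the carry position of the nested loops (the paper's observation that $h$ is the minimal index whose $l_i$ has changed since the last check) and then determine which earlier iterate is stored in $\tilde{l}^h$ (which the paper does via its observations on when $\tilde{l}^i$ and $h$ are updated plus a case analysis on $i$ versus $h$, and you do via the closed-form invariant $\tilde{l}^g = l^{(\mu(g,m))}$ proved by induction). The difference is purely presentational — your explicit lexicographic enumeration, carry positions, and stored-value invariant make the bookkeeping more rigorous than the paper's three informal observations, but no new idea or different decomposition is involved.
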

\begin{proof}
  For every $i=1,...,k$ let $l_i^1:= \min_{l_i \in L_i} l_i$ be the minimal element in $L_i$.
  We use the following observations:
  \begin{enumerate}
  \item\label{proof_lemma_alg_Bertsim_faster_2_general_obs_2} Because $h=1$ and $\tilde{l}^1 = (0,...,0)$ in the first iteration, the first subproblem is solved and then $\tilde{l}^i$ is set to $\tilde{l}^i:=(l_1^1,l_2^1,...,l_k^1)$ for all $i=1,...,k$ in Line~\ref{alg_Bert_Sim_general_tilde2}. Thereafter, $\tilde{l}^i$ is changed in Line~\ref{alg_Bert_Sim_general_tilde2} if and only if $h\leqq i$.
  \item\label{proof_lemma_alg_Bertsim_faster_2_general_obs_3} The value of $h$ is changed to $\hat{h}$ whenever one iteration of the for-loop changing $l_{\hat{h}}$ is finished. Then $l_{i}= l^1_{i} \ \forall \ i > \hat{h}$ during the next execution of Lines~\ref{alg_Bert_Sim_general_check} to~\ref{alg_Bert_Sim_general_h}. 
  \item\label{proof_lemma_alg_Bertsim_faster_2_general_obs_4} In Line~\ref{alg_Bert_Sim_general_check} \[ h = \min \{ i \in \{1,...,k\}: l_i \text{ has changed since the last execution of Line~\ref{alg_Bert_Sim_general_check}} \}.\] 
  \end{enumerate}
  We consider the state of the algorithm during any iteration of Line~\ref{alg_Bert_Sim_general_check} and show $\tilde{l}^h_h < l_h$ and $\tilde{l}^h_i = l_i \ \forall \ i \neq h$. Let $\hat{h}$ denote the value of $h$ at this moment.
  \begin{itemize}
   \item $i > \hat{h}$:  When $\tilde{l}^{\hat{h}}$ was changed last, $h \leqq \hat{h} < i$ hold (\ref{proof_lemma_alg_Bertsim_faster_2_general_obs_2}.), so $\tilde{l}_i^{\hat{h}}$ was set to $l_i^1$ (\ref{proof_lemma_alg_Bertsim_faster_2_general_obs_3}.). It follows  $\tilde{l}_i^{\hat{h}} = l_i^1 \overset{\ref{proof_lemma_alg_Bertsim_faster_2_general_obs_3}.}{=} l_i$.
   \item $i< \hat{h}$: When $\tilde{l}^{\hat{h}}$ was changed, either $h < i$ or $h \geqq i$ hold. If it was $h < i$ then $\tilde{l}_i^{\hat{h}}$ was set to $l_i^1$ (\ref{proof_lemma_alg_Bertsim_faster_2_general_obs_3}.) and it follows  $\tilde{l}_i^{\hat{h}} = l_i^1 \overset{\ref{proof_lemma_alg_Bertsim_faster_2_general_obs_3}.}{=} l_i$. \\ If it was $h \geqq i$, then $l_i$ was not changed since then, otherwise $\tilde{l}^{\hat{h}}$ would have been changed again, because of $i<\hat{h}$ (\ref{proof_lemma_alg_Bertsim_faster_2_general_obs_2}.). It follows $l_i = \tilde{l}^{\hat{h}}_i$.
   \item $i = \hat{h}$: 
  We show first that $l_{\hat{h}}$ changed at most once since the last change of $\tilde{l}^{\hat{h}}$. During the first execution of Line~\ref{alg_Bert_Sim_general_check} after the first change of $l_{\hat{h}}$ it holds $h\geqq \hat{h}$ (\ref{proof_lemma_alg_Bertsim_faster_2_general_obs_4}.). So  $\tilde{l}^{\hat{h}}$ is changed again in Line~\ref{alg_Bert_Sim_general_tilde2}, before $l_{\hat{h}}$ could be changed a second time. \\
  From (\ref{proof_lemma_alg_Bertsim_faster_2_general_obs_4}.) follows, that $l_{\hat{h}}$ has changed since the last execution of Line~\ref{alg_Bert_Sim_general_check}, but $l_{\hat{h}-1}$ hasn't. Therefore, the for-loop changing $l_{\hat{h}-1}$ has not been finished. Hence, $l_{\hat{h}}$ can not have been set to $l^1_{\hat{h}}$ again, but was increased. This was the only change of $l_{\hat{h}}$ since $\tilde{l}^{\hat{h}}$ was set to its current value. It follows $\tilde{l}^{\hat{h}}_{\hat{h}} < l_{\hat{h}}$.
  \end{itemize}
\end{proof}
\begin{cor}\label{cor_Bert_Sim_general}
 If we replace in Algorithm~\ref{alg_Bert_Sim_general} Step~\ref{alg_step_determine} with Algorithm~\ref{alg_Bert_Sim_alternative_step2} and 
 Step~\ref{alg_step_solve} with Algorithm~\ref{alg_Bert_Sim_alternative_step3_mo2}, it still finds a complete set of robust efficient solutions for general instances of multi-objective cardinality-constrained uncertain combinatorial optimization problems. During its execution at most $\prod_{i=1}^k \left( \left\lceil \frac{|E|-\Gamma_i}{2} \right\rceil +1 \right)$ deterministic subproblems have to be solved.
\end{cor}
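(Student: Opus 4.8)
The plan is to treat this corollary as an assembly of the preceding results rather than as a fresh argument: correctness of the unmodified Algorithm~\ref{alg_Bert_Sim_general} is already guaranteed by Theorem~\ref{theo_alg_Bert_Sim_general}, so I only need to check that replacing Step~\ref{alg_step_determine} by the per-objective index reduction (Algorithm~\ref{alg_Bert_Sim_alternative_step2_mo2}, applying Algorithm~\ref{alg_Bert_Sim_alternative_step2} to each objective) and replacing Step~\ref{alg_step_solve} by solution checking (Algorithm~\ref{alg_Bert_Sim_alternative_step3_mo2}) loses no robust efficient solution, and then read off the bound on the number of solved subproblems. I would organise this into three blocks: (i) the index reduction preserves, for every $q$, the existence of an index $l\in L$ with $\zBS^l(q)=\zRC(q)$; (ii) the solution checking only ever copies a set that is already complete for the target subproblem; (iii) the count of genuinely solved subproblems is bounded by $|L|=\prod_{i=1}^k|L_i|$.

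For block (i) I would invoke that the proof of Lemma~\ref{lemma_alg_Bertsim_faster_1_sortable} applies to each objective separately, so each $L_i$ still contains, for every $q$, an index $l_i$ achieving $\zBS^l_i(q)=\zRC_i(q)$. The decisive observation is that the $i$-th component $\zBS^l_i(q)$ depends only on the single coordinate $l_i$; hence the coordinatewise choices can be made independently and combined into one vector $l=(l_1,\dots,l_k)\in L=L_1\times\dots\times L_k$ with $\zBS^l(q)=\zRC(q)$ in all components at once. Together with $\zRC(q)\leqq\zBS^l(q)$ from the proof of Theorem~\ref{theo_alg_Bert_Sim_general}, the comparison step then retains exactly the robust efficient solutions, as in that theorem.

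Block (ii) is the part I expect to require the most care, and it is where Lemmas~\ref{lemma_alg_Bertsim_faster_2_general_a} and~\ref{lemma_alg_Bertsim_faster_2_general_b} must be interlocked. I would argue by induction over the order in which the vectors $l\in L$ are processed by Algorithm~\ref{alg_Bert_Sim_alternative_step3_mo2}, maintaining the invariant that the stored $\ZBS^l$ is a complete set of efficient solutions for $\cGMSP(l)$. The first subproblem is solved directly, giving the base case. In the inductive step, if the condition in Line~\ref{alg_Bert_Sim_general_check} holds the subproblem is solved directly; otherwise $\ZBS^l$ is copied from $\ZBS^{\tilde{l}^h}$. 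Here Lemma~\ref{lemma_alg_Bertsim_faster_2_general_b} guarantees that $\tilde{l}^h$ and $l$ differ only in coordinate $h$, with $\tilde{l}^h_h<l_h$, so the set $I$ of differing coordinates in Lemma~\ref{lemma_alg_Bertsim_faster_2_general_a} is exactly $\{h\}$ and $\cup_{i\in I}E^i_{l_i}=E^h_{l_h}$. Thus the negation of the Line~\ref{alg_Bert_Sim_general_check} condition, namely that no set in $\ZBS^{\tilde{l}^h}$ contains any element of $E^h_{l_h}$, is precisely the hypothesis of Lemma~\ref{lemma_alg_Bertsim_faster_2_general_a}; since by the induction hypothesis $\ZBS^{\tilde{l}^h}$ is complete for $\cGMSP(\tilde{l}^h)$, that lemma makes it complete for $\cGMSP(l)$ too, closing the induction.

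Finally, for block (iii) I would note that every $l\in L$ is visited exactly once and that a subproblem is solved only in the ``if'' branch, so the number of deterministic subproblems solved is at most $|L|$. Since Algorithm~\ref{alg_Bert_Sim_alternative_step2_mo2} builds $L=L_1\times\dots\times L_k$ with $|L_i|=\left\lceil\frac{|E|-\Gamma_i}{2}\right\rceil+1$ by the per-objective reduction of Lemma~\ref{lemma_alg_Bertsim_faster_1_sortable}, the bound $|L|=\prod_{i=1}^k\left(\left\lceil\frac{|E|-\Gamma_i}{2}\right\rceil+1\right)$ follows immediately. The only genuine obstacle is keeping the bookkeeping of block (ii) straight, in particular reconciling the index-difference convention of Lemma~\ref{lemma_alg_Bertsim_faster_2_general_a} with the state of the $\tilde{l}^h$ variables described by Lemma~\ref{lemma_alg_Bertsim_faster_2_general_b}; once $I=\{h\}$ is identified, the rest is routine.
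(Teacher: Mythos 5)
Your proposal is correct and is essentially the paper's own (implicit) argument: the paper states this corollary without a separate proof, intending exactly the assembly you give, namely Theorem~\ref{theo_alg_Bert_Sim_general} plus the per-objective reduction of Lemma~\ref{lemma_alg_Bertsim_faster_1_sortable} for the index sets $L_i$, and Lemmas~\ref{lemma_alg_Bertsim_faster_2_general_a} and~\ref{lemma_alg_Bertsim_faster_2_general_b} interlocked by induction over the processing order to justify every copy step. Your reconciliation of the two index conventions is also the right one: the statement of Lemma~\ref{lemma_alg_Bertsim_faster_2_general_a} has the roles of $l$ and $\tilde{l}$ swapped relative to its own proof (the proof's cost inequalities hold for $\tilde{l}\leqq l$ with the complete set known for the smaller vector), and applying it in that corrected direction with $I=\{h\}$, as you do, is exactly what the algorithm requires.
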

The number of subproblems to be solved for general instances is hence a lot higher than for instances with objective independent element order. But if there exists a common element order for only a subset of the objectives, we can already reduce the number of subproblems significantly:
\begin{de}\label{def_partly_oieo}
 An instance $(E, Q,\nom,\delta,\Gamma)$ has \emph{partial objective independent element order} if there is a subset $\{i_1,...,i_r\} \subset \{1,...,k\}$ with
  \begin{itemize}
  \item $\Gamma_{i_1} = \Gamma_{i_2} = ... = \Gamma_{i_r}$ and
  \item there exists an order of the elements, such that \[ \delta_{e_1,i} \geqq ... \geqq \delta_{e_{|E|},i} \ \forall \ i \in \{i_1,...,i_r\}.\]
  \end{itemize}
\end{de}
\begin{lemma}
 Let an instance $(E, Q,\nom,\delta,\Gamma)$ with partial objective independent element order be given and let $\{i_1,...,i_r\}$ be the subset from definition~\ref{def_partly_oieo}. 
 Then the nested for-loops changing $l_{i_1},...,l_{i_r}$ in Algorithm~\ref{alg_Bert_Sim_alternative_step3_mo2} can be replaced by a single for-loop. The number of solved deterministic subproblems in Algorithm~\ref{alg_Bert_Sim_general} with Algorithm~\ref{alg_Bert_Sim_alternative_step2} as Step~\ref{alg_step_determine} and Algorithm~\ref{alg_Bert_Sim_alternative_step3_mo2} as Step 3 is then less or equal to
\[ \left(\left\lceil \frac{|E|-\Gamma_{i_1}}{2} \right\rceil +1 \right)  \cdot \prod_{i\in \{1,...,k\} \setminus \{i_1,...,i_r\}} \left( \left\lceil \frac{|E|-\Gamma_i}{2} \right\rceil +1 \right).\]
\end{lemma}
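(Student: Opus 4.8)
The plan is to reduce the block of objectives $\{i_1,\ldots,i_r\}$ sharing a common element order to the objective-independent situation of Section~\ref{subsection_detsubproblems_objective_independen}, while treating the remaining objectives exactly as in the general Algorithm~\ref{alg_Bert_Sim_general}. The crucial observation is that, for the objectives in $\{i_1,\ldots,i_r\}$, the common ordering yields $E^{i_1}_{j} = \cdots = E^{i_r}_{j}$ and $\idelta_j^{i_1} = \cdots = \idelta_j^{i_r}$ for every index $j$, so iterating $l_{i_1},\ldots,l_{i_r}$ independently is wasteful: it suffices to let them advance together along the diagonal $l_{i_1} = \cdots = l_{i_r}$.

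First I would prove that this reduction is correct, i.e., that restricting the index set $L$ to vectors with $l_{i_1} = \cdots = l_{i_r}$ still produces a complete set of robust efficient solutions. As in Theorem~\ref{theo_alg_Bert_Sim_sortable}, $\zBS^l(q) \geqq \zRC(q)$ holds for every $l$ and every $q$, so it is enough to show that for each robust efficient $q$ there is a retained index vector $l$ with $\zBS^l(q) = \zRC(q)$. For the shared objectives I would invoke the argument of Theorem~\ref{theo_alg_Bert_Sim_sortable}: if $q$ has at least $\Gamma_{i_1}$ elements, the element with the $\Gamma_{i_1}$-th smallest index in the common order determines a single index $l^*$ for which $\zBS^{l^*}_i(q) = \zRC_i(q)$ holds simultaneously for all $i \in \{i_1,\ldots,i_r\}$; if $q$ has fewer than $\Gamma_{i_1}$ elements, the single index $l^* = |E|+1$ works for all of them. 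In both cases one common index $l^*$ attains equality on the whole block of shared objectives, and this is exactly where the hypotheses $\Gamma_{i_1} = \cdots = \Gamma_{i_r}$ and the common order are used. For each remaining objective $i \notin \{i_1,\ldots,i_r\}$ I would pick $l_i$ independently as in the proof of Theorem~\ref{theo_alg_Bert_Sim_general}. Setting $l_{i_1} = \cdots = l_{i_r} = l^*$ then gives an index vector in the reduced set with $\zBS^l(q) = \zRC(q)$, so $q$ or an equivalent solution is found.

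Next I would justify the loop replacement together with the count. Collapsing the nested loops over $l_{i_1},\ldots,l_{i_r}$ into a single loop advancing all these indices together exactly realises the diagonal restriction, and applying the per-objective reduction of Lemma~\ref{lemma_alg_Bertsim_faster_1_sortable} to this single diagonal index (via Algorithm~\ref{alg_Bert_Sim_alternative_step2} with $\Gamma_{i_1}$) bounds the number of retained diagonal values by $\lceil (|E|-\Gamma_{i_1})/2 \rceil + 1$. For each remaining objective the same reduction gives $\lceil (|E|-\Gamma_i)/2 \rceil + 1$ values. Since the diagonal block and the remaining objectives are iterated independently, the total number of index vectors, hence of deterministic subproblems actually solved, is at most the product of these factors, which is precisely the claimed bound.

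I expect the main obstacle to be the correctness step, specifically verifying that a single shared index $l^*$ can simultaneously attain $\zBS^{l^*}_i(q) = \zRC_i(q)$ for all $i \in \{i_1,\ldots,i_r\}$, and checking that this remains compatible with the solution checking of Lemma~\ref{lemma_alg_Bertsim_faster_2_general_a} once the loops are merged. One must confirm that merging the loops does not corrupt the bookkeeping of $\tilde{l}^h$ and $h$ used for solution checking in Algorithm~\ref{alg_Bert_Sim_alternative_step3_mo2}, and that the two distinct equality regimes (at least $\Gamma_{i_1}$ elements versus fewer) are handled uniformly across the entire shared block.
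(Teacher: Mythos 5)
Your proposal is correct and follows essentially the same route as the paper: the paper's proof simply states that the lemma "follows directly from the proofs of Theorem~\ref{theo_alg_Bert_Sim_sortable} and Lemma~\ref{lemma_alg_Bertsim_faster_1_sortable}," and your argument is exactly that combination made explicit — the common order and equal $\Gamma$ values let one diagonal index attain $\zBS^{l}_i(q) = \zRC_i(q)$ simultaneously for the whole shared block, the remaining objectives are handled as in Theorem~\ref{theo_alg_Bert_Sim_general}, and the per-loop reductions of Lemma~\ref{lemma_alg_Bertsim_faster_1_sortable} yield the product bound. Your concern about the $\tilde{l}^h$ bookkeeping is reasonable but immaterial to the stated bound, since solution checking can only decrease the number of subproblems solved below that upper bound.
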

\begin{proof}
 This follows directly from the proofs of Theorem~\ref{theo_alg_Bert_Sim_sortable} and Lemma~\ref{lemma_alg_Bertsim_faster_1_sortable}.
\end{proof}

\subsection{\La{}  }\label{section_la}

\DSA{} is especially useful for problems with high $\Gamma_i$, because fewer subproblems have to be solved for higher values of $\Gamma_i$. For small $\Gamma_i$ the method described in \ref{subsection_la} might be preferred. Its idea is to transfer the multi-objective uncertain combinatorial optimization problem with $k$ objectives 
into a deterministic combinatorial optimization problem of the same kind 
with $\sum_{i=1}^{k}(\Gamma_i + 1)$ objective functions, some of which are 
bottleneck functions 
instead of sum functions. 
The concept is particularly useful if an efficient algorithm for solving the deterministic multi-objective problem with sum and bottleneck functions is available. 
As an example we present such an algorithm for the shortest path problem in Section~\ref{subsection_labeling_sp}.

\subsubsection{\La{}  for cardinality-constrained uncertain combinatorial optimization problems}\label{subsection_la}
We first consider the single-objective uncertain problem $\left( \min_{q \in Q} z(q,\xi), \xi \in \cU \right)$.  
Its minmax robust counterpart is \begin{align} \label{problem_minmax_onecrit} \min_{q \in Q} \left( \zRC(q) := \max_{\xi \in \cU} z(q,\xi) \right).\end{align} 
\begin{de}
 For a set $A\subseteq \R$ let $\kmax{j} (A)$ denote the $j$-greatest element of $A$.\\
 For a set $q \subseteq E$ let $\kmaxs{j}{{e \in q}} \delta_e $ := $\kmax{j}(\{ \delta_e: e \in q \} )  $  denote the $j$-highest value $\delta_e$ which appears in $q$. 
If $q$ has less than $j$ elements we define \emph{$\kmaxs{j}{{e \in q}} \delta_e := 0 $}
\end{de}

\begin{theo}\label{theo_gamma_labeling_problem}
 Every optimal solution for (\ref{problem_minmax_onecrit}) is an efficient solution for the deterministic multi-objective problem 
\begin{align} \label{problem_gamma_label_onecrit} 
         \min\limits_{q\in Q} \zLA(q) := \begin{pmatrix}
                    \sum_{e\in q} \nome{e}\\
                    \max_{e\in q} \delta_{e} \\
                    \kmaxs{2}{{e\in q}} \delta_{e} \\
                    \vdots \\
                    \kmaxs{\Gamma}{{e \in q}} \delta_{e}
                   \end{pmatrix} .                                                                                                                                                                                                                                                                                                                                                                                                                                                                                                                                                                                                                                                                                                                                                                                               
\end{align}
\end{theo}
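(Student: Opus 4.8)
The plan is to show that the scalar worst-case objective $\zRC$ coincides with the (unweighted) sum of the $\Gamma+1$ components of the vector objective $\zLA$, and then to invoke the classical principle that a minimizer of a sum of objectives with strictly positive weights is efficient for the underlying vector problem. Only one direction is claimed here (robust optimal implies $\zLA$-efficient), so the easy weighted-sum direction suffices. Concretely, as noted in the text preceding the statement, the worst case for a set $q$ is the scenario in which the $\min\{|q|,\Gamma\}$ elements of $q$ with the largest intervals attain their maximal cost, so that $\zRC(q)$ equals $\sum_{e\in q}\nome{e}$ plus the sum of the $\Gamma$ largest interval lengths occurring in $q$. Expressing these through the $\kmaxs{j}{{e\in q}}$ operator, and using the convention $\kmaxs{j}{{e\in q}}\delta_e=0$ whenever $|q|<j$ (which automatically covers the case $|q|<\Gamma$), this reads
\[\zRC(q)=\sum_{e\in q}\nome{e}+\sum_{j=1}^{\Gamma}\kmaxs{j}{{e\in q}}\delta_e=\sum_{i=1}^{\Gamma+1}\zLA_i(q),\]
since $\zLA_1(q)=\sum_{e\in q}\nome{e}$ and $\zLA_{j+1}(q)=\kmaxs{j}{{e\in q}}\delta_e$ for $j=1,\dots,\Gamma$.

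With this identity at hand, I would argue by contradiction. Let $q^*$ be optimal for (\ref{problem_minmax_onecrit}) and suppose it is not efficient for (\ref{problem_gamma_label_onecrit}). Then some $q'\in Q$ dominates it, i.e.\ $\zLA(q')\leq\zLA(q^*)$, which means $\zLA_i(q')\leqq\zLA_i(q^*)$ for every $i$ and $\zLA_j(q')<\zLA_j(q^*)$ for at least one index $j$. Summing these $\Gamma+1$ inequalities and applying the identity yields $\zRC(q')<\zRC(q^*)$, contradicting the optimality of $q^*$. Hence $q^*$ is efficient for (\ref{problem_gamma_label_onecrit}), as claimed.

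The only genuine obstacle is the bookkeeping in the first step: one must verify that summing the $\kmax$-objectives, together with the zero-padding convention for sets with fewer than $\Gamma$ elements, reproduces exactly the worst-case deviation term already derived in the single-objective analysis (the sum over the $h=\min\{|q|,\Gamma\}$ largest cost intervals). Once this identity is confirmed, the efficiency conclusion is immediate, because all weights in the scalarization equal one and are therefore strictly positive, so the final contradiction step is entirely routine.
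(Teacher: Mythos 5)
Your proposal is correct and follows essentially the same route as the paper's own proof: both rest on the identity $\zRC(q)=\sum_{e\in q}\nome{e}+\sum_{j=1}^{\Gamma}\kmaxs{j}{{e\in q}}\delta_e$ (with the zero-padding convention handling $|q|<\Gamma$) and then derive a contradiction by summing the componentwise dominance inequalities, at least one of which is strict, to obtain $\zRC(q')<\zRC(q)$. Your framing of this last step as a unit-weight scalarization argument is just a restatement of the same summation the paper performs explicitly.
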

\begin{proof}
Let $q$ be an optimal solution for Problem~(\ref{problem_minmax_onecrit}). Assume that $q$ is not efficient for Problem~(\ref{problem_gamma_label_onecrit}). Then there exists a solution $q' \in Q$ that dominates $q$ and it follows
\begin{align*} 
    &\sum_{e\in q'} \nome{e} \leqq \sum_{e\in q} \nome{e} \text{ and } \underset{e\in q'}{\kmax{j}} \delta_{e} \leqq \underset{e\in q}{\kmax{j}} \delta_{e} \ \forall \ j=1,...,\Gamma  \text{, with at least one inequality}
    \\  
    \Rightarrow \ &
    \zRC(q') 
    {= }\sum_{e\in q'} \nome{e} +
    \sum_{j=1}^{\Gamma} \underset{e\in q'}{\kmax{j}}\delta_{e} <
    \sum_{e\in q} \nome{e} + 
    \sum_{j=1}^{\Gamma} \underset{e\in q}{\kmax{j}} \delta_{e}=\zRC(q),
\end{align*}
because the worst case scenario for any feasible set is a scenario where the cost of its $\Gamma$ elements with the largest cost intervals take their maximal values. This contradicts $q$ being optimal for (\ref{problem_minmax_onecrit}).
\end{proof}
The reverse of Theorem~\ref{theo_gamma_labeling_problem} does not hold: There exist efficient solutions for (\ref{problem_gamma_label_onecrit}), which are not optimal for (\ref{problem_minmax_onecrit}), as the following example shows.
  \begin{example}\label{example_bottleneck}
Let $G$ be a graph that consists of two disjoint 
paths $q, q'$ from $s$ to $t$ with three edges each. Let the cost interval of all edges in $q$ be $[1,1]$ and of all edges in $q'$ be $[0,1]$ and let $\Gamma = 2$. Then both paths are efficient solutions for Problem~(\ref{problem_gamma_label_onecrit}), because
\[\zLA(q)=(3,0,0)\nleq(0,1,1) = \zLA(q') \text{ and }\zLA(q') = (0,1,1)\nleq(3,0,0) = \zLA(q).\]
But only $q'$ is robust efficient, because
\[\zRC(q')=2<3=\zRC(q).\]
 \end{example}

\begin{lemma}\label{lemma_gamma_labeling_problem_2}
 A complete set of efficient solutions for Problem~(\ref{problem_gamma_label_onecrit}) contains at least one optimal solution for Problem~(\ref{problem_minmax_onecrit}).
\end{lemma}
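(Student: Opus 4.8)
The plan is to combine Theorem~\ref{theo_gamma_labeling_problem} with the definition of a complete set and, crucially, with the explicit formula for $\zRC$ that is already established inside the proof of that theorem. Since $Q$ is finite (it is a combinatorial optimization problem), Problem~(\ref{problem_minmax_onecrit}) admits an optimal solution $q^*$. By Theorem~\ref{theo_gamma_labeling_problem}, this $q^*$ is efficient for Problem~(\ref{problem_gamma_label_onecrit}); in particular it is not dominated by any other feasible solution with respect to $\zLA$.

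Now let $\bar{\cX} \subseteq \cX$ be an arbitrary complete set of efficient solutions for Problem~(\ref{problem_gamma_label_onecrit}). I would distinguish two cases. If $q^* \in \bar{\cX}$, then $\bar{\cX}$ already contains an optimal solution for~(\ref{problem_minmax_onecrit}) and we are done. Otherwise $q^* \in \cX \setminus \bar{\cX}$, and by the definition of a complete set $q^*$ is either dominated by or equivalent to some $q' \in \bar{\cX}$. Since $q^*$ is efficient for~(\ref{problem_gamma_label_onecrit}) it cannot be dominated, so it must be equivalent to some $q' \in \bar{\cX}$, i.e.\ $\zLA(q') = \zLA(q^*)$. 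Reading this componentwise gives
\[ \sum_{e\in q'} \nome{e} = \sum_{e\in q^*} \nome{e} \qquad\text{and}\qquad \kmaxs{j}{{e\in q'}} \delta_e = \kmaxs{j}{{e\in q^*}} \delta_e \quad \forall\, j=1,\dots,\Gamma. \]

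The key step is to turn this coordinatewise equality of the vector $\zLA$ into equality of the scalar robust objective $\zRC$. This is exactly where the formula derived in the proof of Theorem~\ref{theo_gamma_labeling_problem} is used: the worst-case value of any feasible set can be written as
\[ \zRC(q) = \sum_{e\in q} \nome{e} + \sum_{j=1}^{\Gamma} \kmaxs{j}{{e\in q}} \delta_e, \]
because the worst case is attained when the $\Gamma$ elements with the largest cost intervals take their maximal values. Substituting the componentwise equalities above yields $\zRC(q') = \zRC(q^*)$, so $q'$ is also optimal for~(\ref{problem_minmax_onecrit}), and $q' \in \bar{\cX}$ provides the required optimal solution in the complete set.

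I expect the only delicate point to be recognising that the notion of \emph{equivalence} used in a complete set (equality of the full vector $\zLA$) is precisely strong enough to force equality of $\zRC$: this works because $\zRC$ is a fixed function of the coordinates of $\zLA$, namely the sum of the nominal coordinate and the $\Gamma$ bottleneck coordinates. No further domination or optimality argument beyond this substitution is needed, so the proof is short once the formula for $\zRC$ is invoked.
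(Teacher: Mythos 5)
Your proposal is correct and follows essentially the same route as the paper's own proof: invoke Theorem~\ref{theo_gamma_labeling_problem} to get efficiency of an optimal solution, use the definition of a complete set to obtain an equivalent solution $q'$ (ruling out domination by efficiency), and then exploit that $\zRC$ is the sum of the nominal coordinate and the $\Gamma$ bottleneck coordinates of $\zLA$ to conclude $\zRC(q') = \zRC(q^*)$. Your write-up is merely a bit more explicit than the paper's (existence of an optimum via finiteness of $Q$, and the trivial case $q^* \in \bar{\cX}$), but the argument is the same.
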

\begin{proof}
 Let $Q'\subseteq Q$ be a complete set of efficient solutions for (\ref{problem_gamma_label_onecrit}). Assume, that (\ref{problem_minmax_onecrit}) has an optimal solution $q$ that is not contained in $Q'$. According to Lemma~\ref{theo_gamma_labeling_problem}, $q$ is an efficient solution for Problem (\ref{problem_gamma_label_onecrit}), so $Q'$ contains a solution $q'$ with 
		    \[\begin{pmatrix}
                    \sum_{e\in q} \nome{e}\\
                    \max_{e\in q} \delta_{e} \\
                    \kmaxs{2}{{e\in q}} \delta_{e} \\
                    \vdots \\
                    \kmaxs{\Gamma}{{e \in q}} \delta_{e}
                   \end{pmatrix} =
                   \begin{pmatrix}
                    \sum_{e\in q'} \nome{e}\\
                    \max_{e\in q'} \delta_{e} \\
                    \kmaxs{2}{{e\in q'}} \delta_{e} \\
                    \vdots \\
                    \kmaxs{\Gamma}{{e \in q'}} \delta_{e}
                   \end{pmatrix}
                   \Rightarrow \zRC(q) = \sum_{e\in q} \nome{e} + 
    \sum_{j=1}^{\Gamma} \underset{e\in q}{\kmax{j}} \delta_{e} = \zRC(q') \]
    and $q'$ is optimal for (\ref{problem_minmax_onecrit}).
\end{proof}
Now, we transfer this approach to the multi-objective case. For a problem with $k$ objectives, we construct a deterministic problem with $\sum_{i=1}^{k}(\Gamma_i + 1)$ objectives.   
\begin{theo}\label{theo_gamma_labeling_problem_mo}
 Every efficient solution for the multi-objective robust counterpart
 \begin{align}\label{problem_minmax_multicrit} \min_{q \in Q} \zRC(q) = \begin{pmatrix}
                                                               \max_{\xi \in \cU} z_1(q,\xi) \\
                                                               \vdots \\
                                                               \max_{\xi \in \cU} z_k(q,\xi) 
                                                              \end{pmatrix}
\end{align} 
 is an efficient solution for the deterministic multi-objective problem
 \begin{align}\label{problem_gamma_label_multicrit} 
         \min\limits_{q\in Q} \zLA(q) :=
         \begin{pmatrix}
                    \sum_{e\in q} \nomei{e}{1}\\
                    \max_{e\in q} \delta_{e,1} \\
                    \kmaxs{2}{{e\in q}} \delta_{e,1} \\
                    \vdots \\
                    \kmaxs{\Gamma_1}{{e \in q}} \delta_{e,1} \\
                    \sum_{e\in q} \nomei{e}{2}\\
                     \max_{e\in q} \delta_{e,2} \\
                    \vdots \\
                    \kmaxs{\Gamma_k}{{e \in q}} \delta_{e,k} 
       \end{pmatrix}  . 
\end{align}
A complete set of solutions for (\ref{problem_gamma_label_multicrit}) contains a complete set of solutions for (\ref{problem_minmax_multicrit}).
\end{theo}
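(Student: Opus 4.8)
The plan is to mirror the single-objective arguments of Theorem~\ref{theo_gamma_labeling_problem} and Lemma~\ref{lemma_gamma_labeling_problem_2}, now reading the $\sum_{i=1}^k(\Gamma_i+1)$ entries of $\zLA$ as $k$ consecutive \emph{groups}, one per objective. Everything rests on applying the worst-case characterization separately to each objective: for every $q\in Q$ and every $i\in\{1,\dots,k\}$,
\[ \zRC_i(q) = \max_{\xi\in\cU} z_i(q,\xi) = \sum_{e\in q}\nomei{e}{i} + \sum_{j=1}^{\Gamma_i} \kmaxs{j}{{e\in q}}\delta_{e,i}, \]
since for objective $i$ the worst scenario puts the $\Gamma_i$ largest cost intervals at their maximal values. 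Hence each $\zRC_i(q)$ is exactly the sum of the $(\Gamma_i+1)$ entries of $\zLA(q)$ belonging to the $i$-th group.

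For the first statement I would argue by contradiction. Suppose $q$ is efficient for (\ref{problem_minmax_multicrit}) but not for (\ref{problem_gamma_label_multicrit}), so some $q'\in Q$ satisfies $\zLA(q')\leq\zLA(q)$. Entrywise this gives $\zLA_m(q')\leqq\zLA_m(q)$ for every entry $m$, with at least one strict inequality, say in an entry belonging to objective $i_0$. Summing the entries of each group and invoking the characterization above yields $\zRC_i(q')\leqq\zRC_i(q)$ for all $i$; for the group $i_0$ the sum of $\leqq$-terms that contains one strict term is itself strict, so $\zRC_{i_0}(q')<\zRC_{i_0}(q)$. Therefore $\zRC(q')\leq\zRC(q)$, contradicting the efficiency of $q$. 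The only delicate point is that the lone strict inequality hidden inside $\zLA$ survives the summation, which it does precisely because the remaining entries of its group are $\leqq$.

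For the completeness statement, let $Q'\subseteq Q$ be a complete set of efficient solutions for (\ref{problem_gamma_label_multicrit}) and set $Q'':=\{q\in Q': q \text{ is robust efficient}\}$. Take any robust efficient $q$. If $q\in Q'$ then $q\in Q''$. Otherwise, by the first statement $q$ is efficient for (\ref{problem_gamma_label_multicrit}), so completeness of $Q'$ yields a $q'\in Q'$ that dominates or is equivalent to $q$ with respect to $\zLA$; efficiency of $q$ excludes strict domination, leaving $\zLA(q')=\zLA(q)$. Summing the groups gives $\zRC(q')=\zRC(q)$, so $q'$ is itself robust efficient, hence $q'\in Q''$, and $q'$ is $\zRC$-equivalent to $q$. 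Thus $Q''\subseteq Q'$ is a set of robust efficient solutions in which every robust efficient solution has an equivalent representative, i.e.~a complete set of solutions for (\ref{problem_minmax_multicrit}).

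The main obstacle I anticipate is the bookkeeping in the first statement, namely verifying that the single strict inequality of the $\zLA$-domination is routed into the correct objective's group sum and remains strict there; once the grouping is set up correctly this is immediate, and the rest is a direct lift of the established single-objective arguments.
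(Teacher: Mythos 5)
Your proof is correct and follows essentially the same route as the paper: it reduces to the single-objective arguments of Theorem~\ref{theo_gamma_labeling_problem} and Lemma~\ref{lemma_gamma_labeling_problem_2} applied per objective, using the identity $\zRC_i(q)=\sum_{e\in q}\nomei{e}{i}+\sum_{j=1}^{\Gamma_i}\kmaxs{j}{{e\in q}}\delta_{e,i}$ so that $\zLA$-domination (with its strict entry landing in some group $i_0$) yields $\zRC$-domination, and $\zLA$-equivalence yields $\zRC$-equivalence. Your write-up merely makes explicit the grouping and strictness bookkeeping that the paper compresses into ``analogously to the proof of'' references.
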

\begin{proof}
Let $q$ be an efficient solution for Problem (\ref{problem_minmax_multicrit}).
Assume that $q$ is not efficient for Problem (\ref{problem_gamma_label_multicrit}). Analogously to the proof of Lemma $\ref{theo_gamma_labeling_problem}$, there is a path $q' \in Q$ dominating $q$ and it follows that
$\zRC_i(q') < \zRC_i(q)$ for at least one $i \in \{1,...,k\}$,
which contradicts $q$ being efficient for (\ref{problem_minmax_multicrit}). \\
Assume now, that $q \notin Q'$ with $Q'$ being a complete set of efficient solutions for Problem (\ref{problem_gamma_label_multicrit}). Since $q$ is efficient for Problem (\ref{problem_gamma_label_multicrit}), there is a solution $q'\in Q'$ equivalent to $q$ w.r.t.~the objective function of Problem (\ref{problem_gamma_label_multicrit}) and it follows
$\zRC(q) = \zRC(q')$ analogously to the proof of Lemma~\ref{lemma_gamma_labeling_problem_2}. 
\end{proof}
With an algorithm solving the deterministic Problem~(\ref{problem_gamma_label_multicrit}) and a method to filter the obtained solutions we can now find a complete set of robust efficient solutions for the uncertain problem. In the case of a single-objective uncertain problem, an algorithm to solve Problem~\eqref{problem_gamma_label_onecrit} was introduced in \cite{gorski2012generalized}.

\subsubsection{Label setting algorithm (\Lab{}) for the cardinality-constrained uncertain shortest path problem}\label{subsection_labeling_sp}

In this section, we show how to apply the \la{} to the cardinality-constrained uncertain shortest path problem. We propose an adjustment of standard multi-objective labeling algorithms (label setting or label correcting) to find a complete set of robust efficient solutions. \\ 
Let the cardinality-constrained uncertain shortest path problem be defined as in Section \ref{subsection_shortest_path}, i.e., $E$ is the edge set of a graph and $Q$ the set of simple paths from a given start node $s$ to a given end node $t$. For simplicity we consider the single-objective uncertain shortest path problem, i.e., we show how to solve Problem~(\ref{problem_gamma_label_onecrit}), but the adjustments can be used for Problem~(\ref{problem_gamma_label_multicrit}) in the same way. Additionally we assume non-negative edge costs ($\nome{e} \geqq 0 \ \forall \ e \in E$) and adjust a label setting algorithm as an example.
\\ We first recall the definition of a label, which is used in common multi-objective labeling algorithms. A label $l=(z,v',l')$ at a node $v$ consists of 
\begin{itemize}
 \item a cost vector $z$, here $z=(z_0,...,z_{\Gamma})$, 
 \item a predecessor node $v'$, and
 \item a predecessor label $l'$. 
 \end{itemize}
Every label at a node $v \neq s$ with predecessor node $v'$ \emph{represents} a path $q$ from $s$ to $v$ whose last edge is $(v',v)$.  That means that its cost equals the cost of $q$ and its predecessor label $l'$ represents the subpath of $q$ from $s$ to $v'$. We assume here, that no parallel edges exist, such that $v$ and $v'$ uniquely define an edge $(v',v)$. If parallel edges have to be considered, the respective edge can be contained in the label as well. The labels are constructed iteratively from existing labels at the predecessor nodes and can at any time be either \emph{temporary} or \emph{permanent}. \\
Algorithm~\ref{alg_labeling_structure} is a label setting algorithm for solving a shortest path problem of type~(\ref{problem_gamma_label_onecrit}). It is based on the label setting algorithm of Martins for multi-objective shortest path problems \cite{martins1984multicriteria}, but we make the following adjustments: 
\begin{enumerate}
 \item In Step~\ref{alg_labeling_structure_choose_label} a label must be chosen whose cost is not dominated by the cost of any other temporary label. In \cite{martins1984multicriteria} the lexicographically smallest label is chosen. Based on \cite{iori2010aggregate}, we choose the label with the smallest aggregate function $\sum_{i=0,...,\Gamma} z_i$ instead. 
 \item In multi-objective label setting algorithms with only sum functions (as in \cite{martins1984multicriteria}) a new label $l=(z,v',l')$ at $v$ is created by adding the cost $z'$ of the predecessor label $l'$ to the edge cost. For min-max functions the (entry-wise) maximum of the edge cost and the predecessor label's cost is taken (see \cite{gandibleux2006martins}). To solve Problem~(\ref{problem_gamma_label_onecrit}) we need a new way to construct the labels: For the sum objective function, we add the nominal costs $\nome{e}$ of the edge $e:=(v',v)$ to the corresponding predecessor cost entry $z_0$. For the  $\kmax{j}$ objective functions, we compare the interval length $\delta_{e}$ of $e$ to each of the $\Gamma$ longest interval lengths so far $z'_1,...,z'_{\Gamma}$ and insert it at the right position (see Algorithm~\ref{alg_labeling_create_label}). We will use the following notation: $z:= z'\oplus (\nome{e}, \delta_{e})$.
 \item In \cite{martins1984multicriteria} a newly created label is only deleted if it is dominated by a label at the same node. We delete the new label even if another label with equal cost exists at the same node, because we are only looking for a complete set of efficient solutions. This is also the reason why we do not need to consider \emph{hidden} labels, which were introduced by \cite{gandibleux2006martins} for problems with bottleneck functions. Since new labels with the same cost as existing labels are immediately deleted, Algorithm~\ref{alg_labeling_structure} works even without the assumption that no cycles of cost $(0,...,0)$ exist.
\end{enumerate}
\begin{algorithm} 
\caption{Label setting algorithm to solve a shortest path problem of type~\eqref{problem_gamma_label_onecrit}}\label{alg_labeling_structure}
	\begin{algorithmic}[1]
	  \Require an instance $I=(E,Q,\nom,\delta,\Gamma)$ of a multi-objective shortest path problem of type \eqref{problem_gamma_label_onecrit}
	  \Ensure permanent labels at $t$, representing a complete set of efficient solutions for $I$
	 \State Create a temporary label $l_0$ with cost (0,...,0) at node $s$.\label{alg_labeling_structure_create_first_label}
	 \While{there exists at least one temporary label}
	    \State Select a temporary label $l'$ (at any node $v'$) with minimal aggregate cost $\sum_{i=0,...,\Gamma} z'_i$ and make it permanent.\label{alg_labeling_structure_choose_label}%
	    \ForAll{outgoing edges $(v',v)$ of $v'$}
	    \State Create a new temporary label $l$ at $v$ by Algorithm \ref{alg_labeling_create_label}.\label{alg_labeling_structure_create_label} 
	    \If {the cost of $l$ is dominated by or equal to the cost of another label at $v$} \State Delete $l$. \label{alg_labeling_structure_dominated}
	    \ElsIf{$l$ dominates any temporary labels at $v$} \State Delete these labels.
	    \EndIf
	    \EndFor
	 \EndWhile
	\end{algorithmic}
\end{algorithm}

\begin{algorithm}
\caption{Step \ref{alg_labeling_structure_create_label} of Algorithm~\ref{alg_labeling_structure}: Create a new temporary label.}\label{alg_labeling_create_label}
 \begin{algorithmic}[1]
  \Require an edge $(v',v)$, a label $l'$ with cost $z'$ at a node $v'$ 
  \Ensure a new label $l$ at $v$ with predecessor label $l'$ 
        \State $z_0 := z'_0 + \nome{(v',v)}$
	\State $i := 1$
	\While{$i \leqq \Gamma$}
	    \If{$\delta_{(v',v)} > z'_i$}
	      \State $z_i := \delta_{(v',v)}$ 
	      \For{ $j := i+1, ..., \Gamma$}
	      $z_j := z'_{j-1}$
	      \EndFor
	      \State $i := \Gamma + 1$
	    \Else \State $z_i := z'_i$ \State $i := i+1$  
	    \EndIf
	\EndWhile
	\State Create the temporary label $l:=((z_0,...,z_{\Gamma}),v',l')$ at node $v$.
 \end{algorithmic}
\end{algorithm}
 
\begin{lemma}\label{lemma_label_construction}
 In Algorithm~\ref{alg_labeling_structure} for every label $l=(z,v',l')$ at a node $v$ there exists a path $q$ from $s$ to $v$ with
 $ z = \zLA(q)$. 

\end{lemma}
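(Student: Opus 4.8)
The plan is to prove the statement by induction on the length of the chain of predecessor labels, which coincides with the order in which Algorithm~\ref{alg_labeling_structure} constructs the labels: every label other than the one created in Step~\ref{alg_labeling_structure_create_first_label} is built from a strictly earlier predecessor label. The invariant I would maintain is exactly the claim: for a label $l=(z,v',l')$ at $v$ there is a path $q$ from $s$ to $v$ with $z=\zLA(q)$. Since only the existence of \emph{some} $s$-$v$ path is asserted, I never need $q$ to be simple and may simply take $q=q'+(v',v)$, where $q'$ is the path the induction hypothesis supplies for $l'$.

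For the base case, the initial label at $s$ has cost $(0,\dots,0)$ and represents the trivial path with no edges; for this path $\sum_{e\in q}\nome{e}=0$ and $\kmaxs{j}{{e\in q}}\delta_e=0$ for every $j$ by the convention for paths shorter than $j$, so $z=\zLA(q)$. For the inductive step, let $e=(v',v)$ and apply the hypothesis to $l'$ to obtain $q'$ with $z'=\zLA(q')$, and set $q=q'+e$. The first coordinate is immediate: Algorithm~\ref{alg_labeling_create_label} sets $z_0:=z_0'+\nome{e}$, so $z_0=\sum_{e'\in q'}\nome{e'}+\nome{e}=\sum_{e'\in q}\nome{e'}$.

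The core of the argument is to show that the remaining coordinates produced by the while-loop of Algorithm~\ref{alg_labeling_create_label} equal $\kmaxs{1}{{e'\in q}}\delta_{e'},\dots,\kmaxs{\Gamma}{{e'\in q}}\delta_{e'}$. I would isolate this as an order-statistic reduction. By the induction hypothesis $(z_1',\dots,z_\Gamma')$ are the $\Gamma$ largest interval lengths of $q'$ in descending order, padded by zeros when $q'$ has fewer than $\Gamma$ edges (legitimate since all $\delta$-values are nonnegative). Every interval length of $q'$ outside this top block is at most $z_\Gamma'$. As the edge multiset of $q$ is that of $q'$ together with the single value $\delta_e$, the $\Gamma$ largest values of $q$ can only come from the $\Gamma+1$ numbers $z_1',\dots,z_\Gamma',\delta_e$: these already dominate every discarded value of $q'$, so none of the discarded values can enter the new top $\Gamma$. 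It therefore suffices to insert $\delta_e$ into the sorted tuple $(z_1',\dots,z_\Gamma')$ and drop the resulting smallest entry.

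Finally I would check that the loop performs exactly this insert-and-shift: it finds the smallest index $i$ with $\delta_e>z_i'$ (so $\delta_e\leqq z_{i'}'$ for $i'<i$ and $\delta_e>z_{i'}'$ for $i'\geqq i$ by descending order), keeps $z_{i'}=z_{i'}'$ for $i'<i$, puts $z_i=\delta_e$, and shifts $z_{i'}=z_{i'-1}'$ for $i'>i$; if no such $i$ exists it leaves the block unchanged, matching the case $\delta_e\leqq z_\Gamma'$. In both cases the tuple $(z_1,\dots,z_\Gamma)$ is the sorted top $\Gamma$ of $q$, which closes the induction. I expect the order-statistic reduction --- the fact that keeping only the $\Gamma$ largest interval lengths of the predecessor path discards no information needed later --- to be the one genuinely delicate point; once it is stated, matching it to the loop is a short case distinction.
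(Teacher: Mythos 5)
Your proof is correct and takes essentially the same route as the paper's: induction over the order in which labels are created, a trivial base case at $s$, and an inductive step showing that the insert-and-shift of Algorithm~\ref{alg_labeling_create_label} turns $\zLA(q')$ into $\zLA(q'\cup(v',v))$, which is exactly the paper's two-case analysis ($\delta_{(v',v)}$ below all of $z'_1,\dots,z'_\Gamma$, or inserted at the first index where it strictly exceeds $z'_i$). Your explicitly stated order-statistic reduction --- that the top $\Gamma$ interval lengths of the extended path must come from $z'_1,\dots,z'_\Gamma,\delta_{(v',v)}$ because the predecessor's discarded values are all at most $z'_\Gamma$ --- is just a more careful justification of what the paper asserts implicitly, and it is sound.
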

\begin{proof}
 We show the statement by induction: \\ The first label has cost $(0,...,0)$ and represents the path only consisting of node $s$. \\ Let $z'=(z'_0,...,z'_{\Gamma})$ be the cost of the predecessor label $l'$ and assume that $z'$ equals the cost of a path $q'$ from $s$ to $v'$. Then we have
\begin{align*}
 z_0 = z'_0 + \nome{(v',v)} = \sum_{e\in q'} \nome{e}  + \nome{(v',v)} = \sum_{e\in q'\cup (v',v)} \nome{e}.
\end{align*}
For the other objectives we distinguish two cases:
\begin{itemize}
 \item Case 1: $\delta_{(v',v)} \leqq z'_i \ \forall i = 1,...,\Gamma$. In this case the $\Gamma$ edges $e$ with biggest intervals $\delta_e$ of $q'$ and $q'\cup (v',v)$ are the same and ${z}_i = z'_i$ for all objectives. Therefore, $({z}_0,...,{z}_\Gamma) = \zLA(q\cup (v',v))$.
 \item Case 2: Either $\delta_{(v',v)} > z'_i$ for $i=1$  or $\ \exists i \in \{2,...,\Gamma\}$ with $z'_{i-1} \geqq \delta_{(v',v)} > z'_i$. Then 
 \begin{align*}
  &\forall \ j< i :  \ {z}_j = z'_j \text{ and } \kmaxs{j}{{e\in q'}} \delta_{e} = \kmaxs{j}{{e\in q' \cup (v',v)}} \delta_{e} \\
  &\text{for } \ j= i :  \ {z}_j = \delta_{(v',v)} =  \kmaxs{j}{{e\in q'\cup (v',v)}} \delta_{e}  \\
  &\forall \ j:  \Gamma \geqq j> i :  \ {z}_j = z'_{j-1} =  \kmaxs{j}{{e\in q\cup (v',v)}} \delta_{e}
 \end{align*}
\end{itemize}
It follows $({z}_0,...,{z}_\Gamma) = \zLA(q' \cup (v',v))$.
\end{proof}
 
 In the deterministic case with only sum functions, subpaths of efficient paths are efficient as well, which plays an important role in the proof of Martin's algorithm. If some of the objective functions are bottleneck functions, this property does not hold any more \cite{gandibleux2006martins}. In our case, since we only look for a complete set of efficient solutions, the following weaker property is sufficient (this was observed but not proven in \cite{iori2010aggregate}).
 
 \begin{lemma}\label{lemma_subpaths_of_gamma_labeling}
 Let $q$ from $s$ to $t$ be an efficient path with respect to $\zLA$ and $v,w$ two nodes on $q$ ($v$ before $w$). 
 Then either $q_{v,w}$ is an efficient path from $v$ to $w$ or there exists an efficient path $p$ such that $q' :=q_{s,v} \cup p \cup q_{w,t}$ is equivalent to $q$.
\end{lemma}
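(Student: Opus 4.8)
The plan is to establish the stated dichotomy directly. If $q_{v,w}$ is efficient from $v$ to $w$, the first alternative holds and there is nothing to do, so I would assume $q_{v,w}$ is not efficient. Then some $v$--$w$ path weakly dominates it, and since (under the standing assumption $\nome{e}\geqq 0$) a complete set of efficient $v$--$w$ paths consists of finitely many simple paths, I can choose an \emph{efficient} $v$--$w$ path $p$ with $\zLA(p)\leqq\zLA(q_{v,w})$. Setting $q':=q_{s,v}\cup p\cup q_{w,t}$, the whole task reduces to proving $\zLA(q')=\zLA(q)$, which is precisely the statement that $q'$ is equivalent to $q$.

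The key tool is a monotonicity property of the top-$\Gamma$ objectives under exchanging one segment of the path. I would isolate it as the following claim on order statistics: for finite multisets $B,B'$ of non-negative reals with $\kmax{j}(B')\leqq\kmax{j}(B)$ for all $j=1,\dots,\Gamma$ (with the convention $\kmax{j}=0$ when there are fewer than $j$ entries) and any finite multiset $D$ of non-negative reals, one has $\kmax{j}(B'\cup D)\leqq\kmax{j}(B\cup D)$ for all $j=1,\dots,\Gamma$. To prove it, fix $j\leqq\Gamma$, set $t:=\kmax{j}(B\cup D)$, and let $b,d$ be the numbers of entries of $B,D$ that strictly exceed $t$; then $b+d\leqq j-1$, so $b+1\leqq\Gamma$. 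As exactly $b$ entries of $B$ exceed $t$, we get $\kmax{(b+1)}(B)\leqq t$, whence by hypothesis $\kmax{(b+1)}(B')\leqq t$, so at most $b$ entries of $B'$ exceed $t$; therefore at most $b+d\leqq j-1$ entries of $B'\cup D$ exceed $t$, giving $\kmax{j}(B'\cup D)\leqq t$. I expect this counting step to be the main obstacle, since the truncation at $\Gamma$ must be respected throughout: the crucial point is that $b+1$ never exceeds $\Gamma$, so the hypothesis can be invoked at index $b+1$.

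Granting the claim, I would control the two kinds of objectives separately. Additivity of the nominal costs together with $\sum_{e\in p}\nome{e}\leqq\sum_{e\in q_{v,w}}\nome{e}$ gives $\sum_{e\in q'}\nome{e}\leqq\sum_{e\in q}\nome{e}$. For each $j=1,\dots,\Gamma$ I would apply the claim with $B,B',D$ the interval-length multisets of $q_{v,w}$, of $p$, and of $q_{s,v}\cup q_{w,t}$ respectively; the hypothesis is exactly the bottleneck part of $\zLA(p)\leqq\zLA(q_{v,w})$, and the conclusion is $\kmaxs{j}{{e\in q'}}\delta_e\leqq\kmaxs{j}{{e\in q}}\delta_e$. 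Together these yield $\zLA(q')\leqq\zLA(q)$ componentwise.

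It then remains to exclude strict domination. Because $\nome{e}\geqq 0$ and $\delta_e\geqq 0$, deleting edges cannot increase any component of $\zLA$; so if $q'$ is not simple I can shortcut it to a simple path $q''\in Q$ with $\zLA(q'')\leqq\zLA(q')\leqq\zLA(q)$. If $\zLA(q')\leq\zLA(q)$ held (strict in some coordinate), then also $\zLA(q'')\leq\zLA(q)$, contradicting the efficiency of $q$. Hence $\zLA(q')=\zLA(q)$, and the efficient path $p$ realizes the second alternative. The two places needing care are the existence of an efficient dominating $p$ (handled by finiteness of the efficient simple paths) and the possible non-simplicity of $q'$ (handled by the shortcutting just described).
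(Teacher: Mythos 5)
Your proposal is correct, and its skeleton coincides with the paper's proof: assume $q_{v,w}$ is not efficient, replace it by an efficient dominating $v$--$w$ path $p$, show $\zLA(q')\leqq \zLA(q)$ componentwise (additivity for the nominal sum, monotonicity of the top-$\Gamma$ order statistics for the bottleneck entries), and conclude equality from the efficiency of $q$. The difference is in how the bottleneck inequality $\kmaxs{j}{{e\in q'}}\delta_e \leqq \kmaxs{j}{{e\in q}}\delta_e$ is established: the paper expresses $\kmaxs{i}{{e\in q'}}\delta_e$ as the $i$-th max of the union of the top-$i$ values of the three segments and then invokes, implicitly, monotonicity of $\kmax{i}$ under sorted domination, whereas you isolate the underlying order-statistics fact as a stand-alone claim and prove it by a threshold-counting argument; your version is more elementary and fully explicit, and your observation that the counting index $b+1$ never exceeds $\Gamma$ is precisely the point where the truncation at $\Gamma$ must be respected. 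Beyond that, you patch two points the paper passes over in silence: (i) the existence of an \emph{efficient} dominating path $p$, which you justify by finiteness of the set of simple $v$--$w$ paths, where the paper simply asserts it; and (ii) the fact that $q'=q_{s,v}\cup p\cup q_{w,t}$ need not be simple, so that $\zLA(q')\leq\zLA(q)$ alone would not contradict the efficiency of $q$ over $Q$ --- your shortcutting argument, which uses $\nome{e}\geqq 0$ and $\delta_e\geqq 0$ to pass to a simple path $q''$ with $\zLA(q'')\leqq\zLA(q')$, closes a gap that is actually present in the paper's own final step ``we conclude $\zLA(q')=\zLA(q)$, because $q$ is efficient.''
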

\begin{proof}
Assume that $q_{v,w}$ is not efficient. Then there exists an efficient path $p$ from $v$ to $w$ that dominates $q_{v,w}$. We have 
\begin{align*}
 \sum_{e\in q'} \nome{e} =\sum_{e\in q_{s,v}} \nome{e} + \sum_{e\in p} \nome{e}+\sum_{e\in q_{w,t}} \nome{e} \leqq \sum_{q_{s,v}} \nome{e} +\sum_{e\in q_{v,w}} \nome{e}+\sum_{e\in q_{w,t}} \nome{e} = \sum_{e\in q} \nome{e}
\end{align*} and for $i=1,...,\Gamma$ it follows from $\kmaxs{j}{{e \in p}} \delta_{e} \leqq \kmaxs{j}{{e \in q_{v,w}}} \delta_{e}$ that $\forall \ j \leqq i$
\begin{align*}
 &\kmaxs{i}{{e \in q'}} \delta_{e} \\ = \ & \kmax{i} \left( \{\kmaxs{j}{{e \in q_{s,v}}}  \delta_{e}: j=1,..,i\} \cup \{\kmaxs{j}{{e \in p}}  \delta_{e}: j=1,..,i\}  \cup \{\kmaxs{j}{{e \in q_{w,t}}}  \delta_{e}: j=1,..,i\}  \right) 
 \\  \leqq \ & \kmax{i} \left( \{\kmaxs{j}{{e \in q_{s,v}}}  \delta_{e}: j=1,..,i\} \cup \{\kmaxs{j}{{e \in q_{v,w}}}  \delta_{e}: j=1,..,i\} \cup \{\kmaxs{j}{{e \in q_{w,t}}}  \delta_{e}: j=1,..,i\} \right) \\= \ & \kmaxs{i}{{e \in q}} \delta_{e}.
\end{align*}
It follows $\zLA(q') \leqq \zLA(q)$ and we conclude $\zLA(q') = \zLA(q)$, because $q$ is efficient with respect to $\zLA$.
\end{proof}

\begin{theo}
 When Algorithm~\ref{alg_labeling_structure} (with Algorithm~\ref{alg_labeling_create_label} as Step~\ref{alg_labeling_structure_create_label}) stops, the permanent labels at $t$ represent a complete set of efficient solutions for Problem (\ref{problem_gamma_label_onecrit}).
\end{theo}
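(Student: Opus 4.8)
The plan is to prove the two inclusions that together say the permanent labels at $t$ form a complete set of efficient solutions: \emph{soundness} (every permanent label at $t$ represents a path that is efficient with respect to $\zLA$) and \emph{completeness} (for every efficient $s$--$t$ path there is a permanent label at $t$ of equal cost). The engine of both directions is a monotonicity property of the selection key. Writing the cost of a label as $z=(z_0,z_1,\dots,z_\Gamma)$, I would first observe that extending a path by one edge never decreases the aggregate $\sum_{i=0}^{\Gamma} z_i$ used in Step~\ref{alg_labeling_structure_choose_label}: the entry $z_0$ grows by $\nome{e}\geqq 0$, and by the insertion carried out in Algorithm~\ref{alg_labeling_create_label} each of the sorted $\Gamma$ largest interval lengths can only increase when a new $\delta_e$ is inserted (this is exactly the case distinction already performed in Lemma~\ref{lemma_label_construction}). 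Hence the key is monotone along $\oplus$, labels become permanent in non-decreasing order of their aggregate, and every label created from a label selected later has aggregate at least as large as the current one. This is precisely the Dijkstra-type property that legitimises a label-\emph{setting} scheme.

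For completeness I would induct on the number of edges of an efficient $s$--$t$ path $q$. Let $u$ be the predecessor of $t$ on $q$. Ordinary subpath optimality fails here because of the bottleneck objectives, so instead I apply Lemma~\ref{lemma_subpaths_of_gamma_labeling} to the segment $q_{s,u}$: either $q_{s,u}$ is already efficient from $s$ to $u$, or $q$ may be replaced by an equivalent path whose prefix to $u$ is efficient. In either case it suffices to treat an efficient path whose prefix $q_{s,u}$ is efficient. By the induction hypothesis there is a permanent label at $u$ with cost $\zLA(q_{s,u})$; at the moment it became permanent the loop over outgoing edges processed $(u,t)$ and, by Lemma~\ref{lemma_label_construction}, created a label $l$ at $t$ with cost $\zLA(q_{s,u}) \oplus (\nome{(u,t)},\delta_{(u,t)}) = \zLA(q)$. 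Because $q$ is efficient, $\zLA(q)$ is non-dominated, so $l$ can never be strictly dominated, and the deletion rule of Step~\ref{alg_labeling_structure_dominated} can only remove it in favour of an existing label of \emph{equal} cost. Thus a permanent label at $t$ with cost $\zLA(q)$ results.

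For soundness, suppose a permanent label $l$ at $t$ represents a path $q$ that is \emph{not} efficient, and pick an efficient $q'$ dominating it; then $\sum_i \zLA(q')_i < \sum_i \zLA(q)_i$, so $q'$ has strictly smaller aggregate. The delicate point -- and the main obstacle -- is that permanent labels are never deleted, so the mere later appearance of a dominating label contradicts nothing: efficiency of $l$ must instead be extracted from the selection order rather than from the deletion rule. I would show that at the instant $l$ is selected there is already a temporary or permanent label whose cost dominates or equals $\zLA(q')$ and whose aggregate is therefore strictly below that of $l$. Concretely, running the completeness construction of the previous paragraph along $q'$, up to the frontier of nodes that have already received a permanent label of the appropriate cost, produces such a label: if it is temporary it contradicts the minimal-aggregate choice in Step~\ref{alg_labeling_structure_choose_label}, and if it is permanent at $t$ it shows that $\zLA(q)$ was already dominated by an existing label when $l$ was created, so $l$ would have been discarded in Step~\ref{alg_labeling_structure_dominated}. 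Either way we obtain a contradiction, so $q$ is efficient.

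Finally I would record termination: with $\nome{e}\geqq 0$ and the rule of Step~\ref{alg_labeling_structure_dominated} that discards any new label equal to or dominated by one already present at the same node, the number of permanent labels at each node is bounded by the number of non-dominated cost vectors, so only finitely many labels are ever made permanent and the algorithm stops (this is also why, as noted in the text, no assumption on zero-cost cycles is needed). Combining soundness and completeness, the permanent labels at $t$ represent efficient paths only and contain one representative of every non-dominated value of $\zLA$; by Definition~\ref{def_equivalent_complete} this is exactly a complete set of efficient solutions for Problem~(\ref{problem_gamma_label_onecrit}).
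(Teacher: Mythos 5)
Your overall route coincides with the paper's: soundness is the standard label-setting argument under the aggregate-cost selection order (the paper simply cites the literature for this part), and completeness is driven by Lemma~\ref{lemma_subpaths_of_gamma_labeling} together with Lemma~\ref{lemma_label_construction} and the observation that a label whose cost is a non-dominated value can never be deleted by domination, only in favour of an equal-cost label. The paper phrases completeness as a backward chain of contradictions along the path (no suitable permanent label at $t$ $\Rightarrow$ none at the predecessor $v'$ $\Rightarrow \dots \Rightarrow$ none at $s$, contradicting the first iteration), whereas you phrase it as a forward induction; these are the same argument up to contraposition.

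However, your induction as stated has a genuine gap: you induct on the number of edges, but in the case where the prefix $q_{s,u}$ is not efficient, Lemma~\ref{lemma_subpaths_of_gamma_labeling} only provides \emph{some} efficient path $p$ from $s$ to $u$ with $\zLA(p \cup (u,t)) = \zLA(q)$, and nothing bounds the number of edges of $p$: it may well have more edges than $q$ itself, so the induction hypothesis (``every efficient path with fewer edges already has a permanent label'') cannot be applied to $p$. The sentence ``in either case it suffices to treat an efficient path whose prefix $q_{s,u}$ is efficient'' is exactly where this is swept under the rug. (Note also that the induction must range over efficient paths from $s$ to \emph{every} node, not only to $t$, since the hypothesis is invoked at $u$.) The gap is fixable: when $q_{s,u}$ is not efficient, the replacement $p$ \emph{strictly} dominates it, hence $\zLA(p) \leq \zLA(q_{s,u}) \leqq \zLA(q)$, so you can perform well-founded induction on the pair $\left(\zLA(\cdot), \text{number of edges}\right)$ ordered by ``strictly dominated cost, or equal cost and fewer edges,'' which is well-founded because a graph has only finitely many simple paths and hence finitely many attainable cost vectors. (The paper's backward chain quietly relies on the same fact for its termination; it just never commits to an explicit induction measure, which is why its looseness is less visible.) With this repaired measure, together with your correct treatment of equal-cost deletion and of termination, the argument goes through.
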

\begin{proof}
We have to show that each permanent label at $t$ represents an efficient path from $s$ to $t$ and that for each efficient path $q$ from $s$ to $t$ a permanent label at $t$ representing $q$ or an equivalent path exists. \\The proof of the first part is analogous to the proof in \cite{ehrgott2006multicriteria} of the multi-objective label setting algorithm by Martins \cite{martins1984multicriteria}. For substituting the lexicographic order with the aggregate cost order see \cite{iori2010aggregate}. 
\\
Now, we show that the algorithm finds a complete set of efficient solutions. Assume that we have an efficient path $q$ from $s$ to $t$, such that there is no permanent label $l$ at $t$ with label costs $z=\zLA(q)$. Consider the predecessor node $v'$ of $t$ on $q$. From Lemma~\ref{lemma_subpaths_of_gamma_labeling} it follows, that there is an efficient path $p$ from $s$ to $v'$ with $\zLA(p \cup (v',t)) = \zLA(q)$. \\
If there exists a permanent label $l'$ at $v'$ with label costs $z' = \zLA(p)$, then, at the moment when it was made permanent during the algorithm, a new label $\bar{l}$ at node $t$ with label costs $\bar{z} = z' \oplus (\nome{(v',t)}, \delta_{(v',t)})$ would have been constructed. It follows \[\bar{z} = z' \oplus (\nome{(v',t)}, \delta_{(v',t)}) = \zLA(p) \oplus (\nome{(v',t)}, \delta_{(v',t)}) = \zLA(p\cup (v',t)) = \zLA(q).\] Consider the first label with cost $\zLA(q)$ that was constructed at node $t$. If this label was deleted again, its costs are dominated, which contradicts the efficiency of $q$. If it was not deleted, then it was made permanent, which contradicts our assumption that no permanent label with costs $\zLA(q)$ exists at $t$.  \\
Therefore, there is no permanent label at the predecessor node $v'$ of $t$ with costs $z'$ such that $z'\oplus(\nome{e},\delta_e) = \zLA(q)$. In the same way, we can show that there is no permanent label at the predecessor node $v''$ of $v'$ with costs $z''$ such that \[\left(z''\oplus (\nome{(v'',v')},\delta_{(v'',v')}) \right) \oplus (\nome{(v',t)},\delta_{(v',t)}) =z' \oplus (\nome{(v',t)},\delta_{(v',t)}) = \zLA(q).\] 
By induction it follows that there is no permanent label at node $s$ with cost $(0,...,0)$, which is a contradiction, because such a label is constructed in Line~\ref{alg_labeling_structure_create_first_label} and made permanent during the first execution of Line~\ref{alg_labeling_structure_choose_label}.
\\We conclude that for each efficient path $q$ from $s$ to $t$ there exists a permanent label at $t$ representing $q$ or a path that is equivalent to $q$. Furthermore, each permanent label at $t$ represents an efficient path from $s$ to $t$. Therefore, the paths represented by the permanent labels are a complete set of efficient solutions. 
\end{proof}

To find a robust optimal solution (in the multi-objective case: a complete set of robust efficient solutions) we have to filter the solutions obtained by the labeling algorithm (see Algorithm~\ref{alg_labeling+filter}).

\begin{algorithm} 
\caption{\Lab{} for the shortest path problem with cardinality-constrained uncertainty}\label{alg_labeling+filter}
	\begin{algorithmic}[1]
	\Require an instance $I=(E, Q,\nom, \delta, \Gamma)$ of the (single-objective) cardinality-constrained uncertain shortest path problem
	\Ensure a robust optimal solution for $I$
	 \State Solve Problem~\eqref{problem_gamma_label_onecrit} with Algorithm~\ref{alg_labeling_structure}.
	 \State Compare the aggregate cost 
	 of all permanent labels in $t$ to find a minimal one. 
	 \State Obtain the path represented by this label by backtracking the predecessor labels.
	\end{algorithmic}
\end{algorithm} 
 
 \begin{cor} 
  Algorithm~\ref{alg_labeling+filter} finds an optimal solution for (\ref{problem_minmax_onecrit}) with non-negative edge costs.
 \end{cor}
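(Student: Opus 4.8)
The plan is to derive the corollary directly from the theorem immediately preceding it, which guarantees that the permanent labels at $t$ produced by Algorithm~\ref{alg_labeling_structure} represent a complete set of efficient solutions for Problem~(\ref{problem_gamma_label_onecrit}), together with Lemma~\ref{lemma_gamma_labeling_problem_2}, which ensures that any such complete set already contains an optimal solution for the robust counterpart~(\ref{problem_minmax_onecrit}). The only thing left to verify is that the filtering carried out in Step~2 of Algorithm~\ref{alg_labeling+filter} --- picking the permanent label of minimal aggregate cost --- actually extracts one of these robust optimal solutions rather than discarding it.

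The key observation I would establish first is that the aggregate cost of a permanent label coincides with the robust objective value $\zRC$ of the path it represents. By Lemma~\ref{lemma_label_construction}, a label $l=(z,v',l')$ at $t$ satisfies $z=\zLA(q)$ for the represented path $q$, so $z_0=\sum_{e\in q}\nome{e}$ and $z_j=\kmaxs{j}{{e\in q}}\delta_e$ for $j=1,\dots,\Gamma$. Hence its aggregate cost is $\sum_{i=0}^{\Gamma} z_i = \sum_{e\in q}\nome{e}+\sum_{j=1}^{\Gamma}\kmaxs{j}{{e\in q}}\delta_e$, which is exactly the worst-case cost $\zRC(q)$ used in the proof of Theorem~\ref{theo_gamma_labeling_problem} (in the worst case for $q$ its $\Gamma$ largest intervals are inflated). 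This identity is the crux of the argument and, once stated, reduces Step~2 to minimizing $\zRC$ over the represented paths.

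With this in hand I would finish as follows. Let $Q'\subseteq Q$ be the complete set of efficient solutions for~(\ref{problem_gamma_label_onecrit}) represented by the permanent labels at $t$. By Lemma~\ref{lemma_gamma_labeling_problem_2}, $Q'$ contains an optimal solution $q^*$ for~(\ref{problem_minmax_onecrit}), i.e.\ $\zRC(q^*)=\min_{q\in Q}\zRC(q)$; the corresponding label has aggregate cost $\zRC(q^*)$. Since every permanent label represents a feasible path $q$ with $\zRC(q)\geqq\zRC(q^*)$ by robust optimality of $q^*$, the minimal aggregate cost over all permanent labels equals $\zRC(q^*)$, and any label attaining it represents a path $q$ with $\zRC(q)=\zRC(q^*)$, hence robust optimal. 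Backtracking in Step~3 then returns such a path.

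I expect no single hard obstacle; the argument is essentially bookkeeping once the aggregate-equals-$\zRC$ identity is isolated. The one point that deserves care is the role of the non-negativity assumption $\nome{e}\geqq0$: it is needed neither for that identity nor for the filtering argument, but only to invoke the preceding theorem, whose correctness (termination and completeness of the label setting procedure, as in \cite{martins1984multicriteria,iori2010aggregate}) relies on non-negative sum costs. I would therefore make explicit that the corollary inherits this hypothesis solely through that theorem.
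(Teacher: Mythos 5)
Your proposal is correct and matches the paper's (implicit) reasoning exactly: the paper states this corollary without proof, as an immediate consequence of the preceding theorem (permanent labels at $t$ represent a complete set of efficient solutions for Problem~(\ref{problem_gamma_label_onecrit})), Lemma~\ref{lemma_gamma_labeling_problem_2} (such a complete set contains a robust optimal solution), and the fact that the aggregate label cost equals $\zRC$ of the represented path, which is precisely what Step~2 of Algorithm~\ref{alg_labeling+filter} minimizes. Your explicit isolation of the aggregate-equals-$\zRC$ identity and of where the non-negativity hypothesis enters is a faithful and complete write-up of that intended argument.
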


\begin{remark}
 The algorithms in this section can easily be extended for the multi-objective case. We will also use the abbreviation \Lab{} to refer to the multi-objective version.
\end{remark}

\section{Experiments} \label{section_experiments}
In this paper we have presented two approaches to solve multi-objective, cardinality-constrained uncertain combinatorial optimization problems. \DSA{} solves the uncertain problem, assuming that we know how to solve the deterministic multi-objective problem. To use the \la{}  we need a method to solve a deterministic multi-objective problem with several objective functions, some of which are sums and some are bottleneck functions. We have introduced such an algorithm for the shortest path problem  (\Lab{}) and, hence, we test our approaches on the multi-objective uncertain shortest path problem.

\subsection{Hazardous material transportation}
We test our algorithms for the  multi-objective uncertain shortest path problem on a hazardous material transportation instance: When transporting hazardous materials, on one hand, the shipping company wants to minimize travel time, distance or fuel costs. On the other hand, if an accident happens, environment and population are exposed to the hazardous material, hence, another objective is to keep the risk and negative impacts of accidents to a minimum. An overview about objectives for hazardous material transportation and about approaches for estimating the risk and the impacts of an accident is given in \cite{erkut2007hazardous}.\\
For our experiments we consider the travel time and the population affected by a potential accident. We assume a nominal travel time on each road and a potential delay resulting from congestion or incidents like accidents or road construction works on some of the roads. We further assume a nominal population level, which can be increased locally by events like fairs or sport events, or due to regular shifts in population during the workday. \\
Our problem instance for hazardous material transportation is based on the instance used in \cite{robipa} to test an algorithm for bi-objective shortest path problems with only one uncertain objective. The underlying network is a sector of the Chicago region road network available at \cite{chicago} (Chicago-regional). The sector contains 1301 nodes and 4091 edges.\\ 
To obtain plausible travel times in \cite{robipa} a traffic assignment problem is solved with an iterative algorithm. It models the simultaneous movement of network users, assuming travelers follow their shortest paths. Congestion effects are taken into account by a nonlinear relationship between the flow on an edge and the travel time. Until an equilibrium solution is found, each iteration of the algorithm produces a flow and resulting travel times on the edges. To obtain the lower (upper) limit of the travel time interval for each edge we choose the smallest and largest travel times obtained during several stages of the iterative equilibrium algorithm. \\ For the population we use the distribution of the population described in \cite{robipa} as nominal values (lower interval limits). We randomly assign integer interval lengths ($\delta_{e,2}$) up to $x\%$ of the respective nominal value. By varying $x$ we obtain several test instances. We call $x$ the \emph{population uncertainty}.
\\We choose an appropriate start and end node 
with an agglomeration of population between them. Figure \ref{fig_2paths} shows two exemplary robust efficient paths for the instance with $x=10$  and $\Gamma = (5,5)$. One of the paths goes directly through the area with high population, here the time objective function has a small value, whereas the number of people exposed to the risk of health damage in case of an accident is relatively high. The other path avoids highly populated areas, which results in a longer travel time.

\begin{figure}
\centering
 \includegraphics[width=10cm]{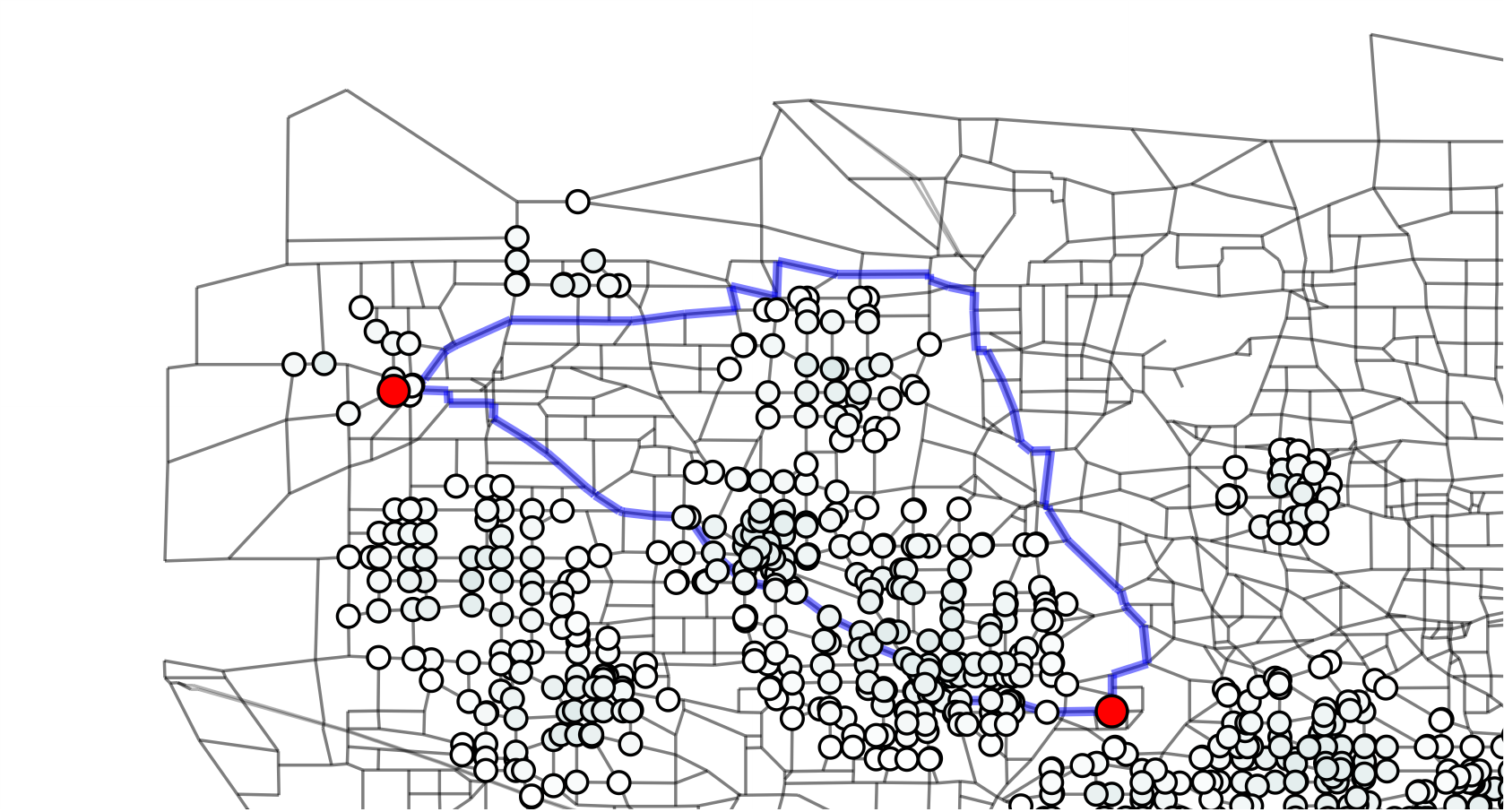}
 \caption{Section of the Chicago regional road network with distribution of population from \cite{robipa}. The red dots show start and end node chosen for our experiments and two exemplary robust efficient paths are marked in blue.}\label{fig_2paths}
\end{figure}

\subsection{Results}
The algorithms are implemented in C++, compiled under Debian 8.6 with g++ 4.9.2 compiler, and run on a Laptop with 2.10 GHz quad core processor and 7.71 GB of RAM. If not stated otherwise, we use an implementation of \DSA{} that contains all enhancements described in Section~\ref{section_DSA} and uses the special version \DSA{}-oi for instances with objective independent element order (see Section~\ref{subsection_detsubproblems_objective_independen}). For solving the subproblems we use an implementation of the Algorithm of Martins \cite{martins1984multicriteria} (with the difference that the labels are selected w.r.t. their aggregate cost instead of using the lexicographic order). There and in the implementation of \Lab{}, we additionally delete new labels at any node if they are dominated by an existing label at $t$. \\In the figures, one data point represents one measurement, except for Section~\ref{subsection_exp_corr}, where we took the average running time of 40 runs. 
\\
To compare the performance of our solution approaches, we solve the bi-objective hazardous material transportation instance described above for different values of population uncertainty $x$ and $\Gamma$ (to keep the number of parameters low, we always choose the same value for  $\Gamma_1$ and $\Gamma_2$ and we will refer to this value as $\Gamma_i$ in the following). In addition, we compare the performance of the algorithms on an instance with three objectives and on an instance with two correlated objective functions. We further evaluate the improvement gained by our enhancement of \DSA{} (solution checking). Finally, we generate an instance with objective independent element order and compare the running time of \DSA{}-oi for such instances to the general \DSA{}.
\subsubsection{Comparison of the two solution approaches for the hazardous material transportation instance}\label{subsection_exp1}
Figure~\ref{fig_compare_DSA_lab} shows the running time of \DSA{} and \Lab{} for several values of $\Gamma_i$ and $x$. 
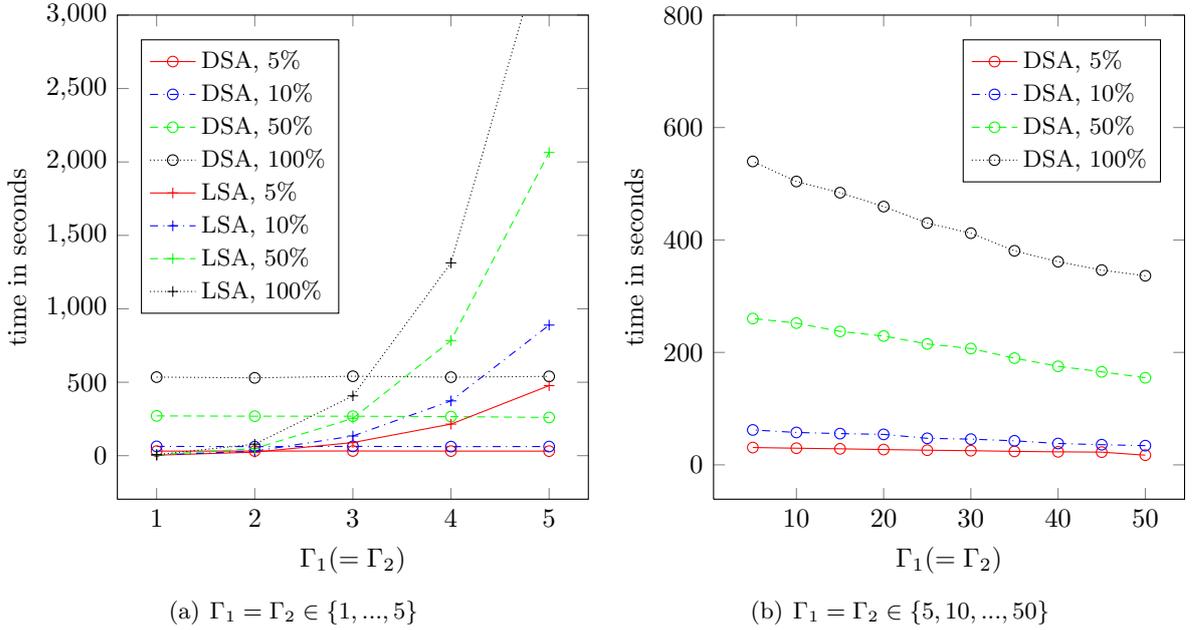
\begin{figure}
\subfigure[{$\Gamma_1 = \Gamma_2  \in \{1,...,5\}$}]{
\begin{tikzpicture}

\begin{axis}
[width=0.5\textwidth,height=8cm, xlabel = {$\Gamma_1 (= \Gamma_2)$}, ylabel = time in seconds, legend entries = {{\DSA{}, 5\%}, {\DSA{}, 10\%}, {\DSA{}, 50\%}, {\DSA{}, 100\%}, {\Lab{}, 5\%}, {\Lab{}, 10\%}, {\Lab{}, 50\%}, {\Lab{}, 100\%}},
 legend style = { at = {(0.05,0.95)}, anchor = north west, font=\footnotesize},legend cell align = left, ymax = 3000]
\pgfplotstableread{plotdata/dsa_labeling.csv}
\datatable
\addplot[color=red, mark=o] table[y = DSA_5] from \datatable ;
\addplot[mark=o, color=blue,style =dashdotted , mark options=solid] table[y = DSA_10] from \datatable ;
\addplot[color=green, mark=o, style=densely dashed, mark options=solid] table[y = DSA_50] from \datatable ;
\addplot[mark=o, color=black, densely dotted, mark options=solid] table[y = DSA_100] from \datatable ;
\addplot[color=red, mark=+] table[y = Labeling_5] from \datatable ;
\addplot[mark=+, color=blue, style=dashdotted, mark options=solid] table[y = Labeling_10] from \datatable ;
\addplot[color=green, mark=+, style=densely dashed, mark options=solid] table[y = Labeling_50] from \datatable ;
\addplot[mark=+, color=black, densely dotted, mark options=solid] table[y = Labeling_100] from \datatable ;
\end{axis}
\end{tikzpicture}
}
\subfigure[$\Gamma_1 = \Gamma_2 \in \{5,10,...,50\}$]{
\begin{tikzpicture}
\begin{axis}
[width=0.5\textwidth,height=8cm,xlabel = {$\Gamma_1 (= \Gamma_2)$}, ylabel = time in seconds, legend entries = {{\DSA{}, 5\%}, {\DSA{}, 10\%}, {\DSA{}, 50\%}, {\DSA{}, 100\%}},
 legend style = { at = {(0.95,0.95), anchor = north east},font=\footnotesize}, legend cell align = left, ymax = 800]
\pgfplotstableread{plotdata/dsa_subpr_time_50.csv}
\datatable
\addplot[color=red, mark=o] table[y = mc5] from \datatable ;
\addplot[color=blue, mark=o, style=dashdotted, mark options=solid] table[y = mc10] from \datatable ;
\addplot[mark=o, color=green, style=densely dashed, mark options=solid] table[y = mc50] from \datatable ;
\addplot[mark=o, color=black, style= densely dotted, mark options=solid] table[y = mc100] from \datatable ;
\end{axis}
\end{tikzpicture}
}
\caption{Running time of \DSA{} and \Lab{} for several values of $\Gamma_i$ and population uncertainty $x$ on two different scales.}\label{fig_compare_DSA_lab}
\end{figure}
The running time of \Lab{} increases with $\Gamma_i$, whereas the running time of \DSA{} decreases (see also Figure~\ref{fig_DSA_check}).  The reason is that for increasing $\Gamma_i$, the number of objectives in the deterministic multi-objective problem solved during \Lab{}  increases as well, whereas the maximal number of subproblems solved during \DSA{} decreases. 
For small values of $\Gamma_i$ \Lab{} solves the given instances faster, for higher values \DSA{} has a better performance. 
\\If we choose a higher value for $x$, which results in a greater maximal and mean deviation from the nominal value and a higher number of different values of $\delta_{e,2}$, the running time of both algorithms increases. In the case of \DSA, the increase of the running time can be explained by the higher number of different values of $\delta_{e,2}$, which leads to a higher number of subproblems.
\subsubsection{Three objectives}
Since we are also interested in the performance of the algorithms for problems with more than two objectives, we generate an artificial third objective using, again, the nominal population.  We generate random interval lengths in the same range as the other population objective, i.e., the value of population uncertainty in general is the same for both population objectives, but the specific interval lengths of each edge may differ. Figure~\ref{fig_compare_DSA_lab_3obj} shows the running times on this instance in comparison to the instance with two objectives described above.

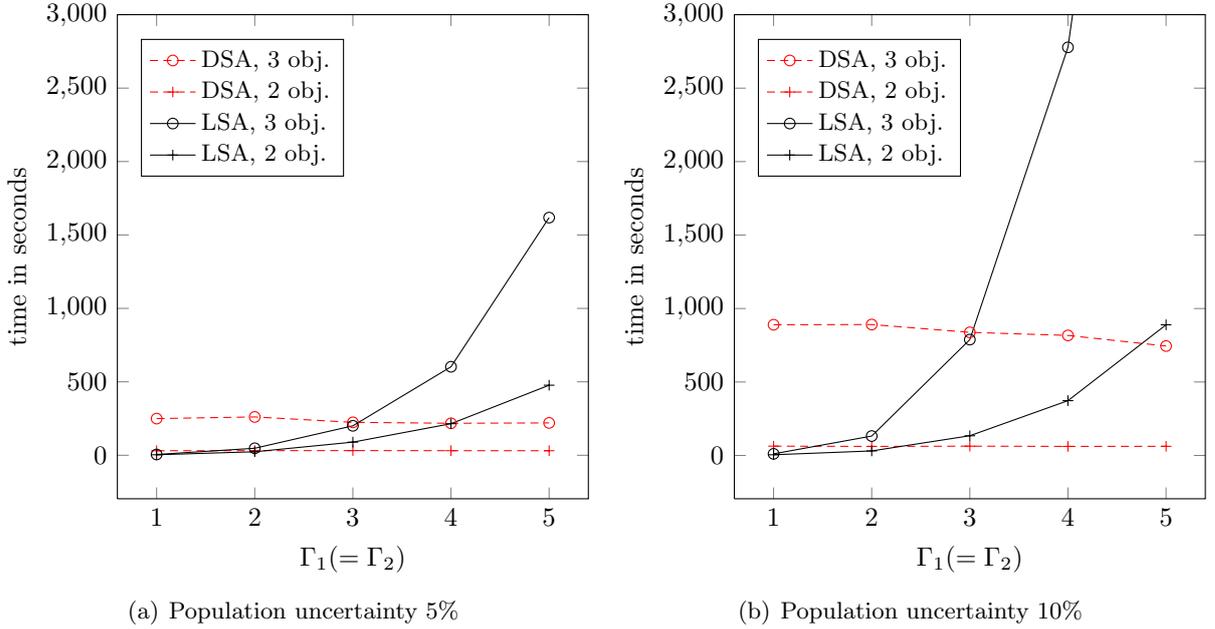
\begin{figure}
\subfigure[Population uncertainty 5\%]{
\begin{tikzpicture}
\begin{axis}
[width=0.5\textwidth,height=8cm,xlabel = {$\Gamma_1 (= \Gamma_2)$}, ylabel = time in seconds, legend entries = {{\DSA{}, 3 obj.}, {\DSA{}, 2 obj.}, {\Lab{}, 3 obj.}, {\Lab{}, 2 obj.}},
 legend style = { at = {(0.05,0.95)}, anchor = north west}, legend cell align = left, ymax = 3000]
\pgfplotstableread{plotdata/dsa_labeling_3obj.csv}
\datatable
\addplot[color=red, mark=o,style=densely dashed, mark options=solid] table[y = DSA_5_3obj] from \datatable ;
\addplot[mark=+, color=red,style=densely dashed, mark options=solid] table[y = DSA_5] from \datatable ;
\addplot[color=black, mark=o] table[y = Labeling_5_3obj] from \datatable ;
\addplot[mark=+, color=black] table[y = Labeling_5] from \datatable ;
\end{axis}
\end{tikzpicture}
}
\subfigure[Population uncertainty 10\%]{
\begin{tikzpicture}
\begin{axis}
[width=0.5\textwidth,height=8cm, xlabel = {$\Gamma_1 (= \Gamma_2)$}, ylabel = time in seconds, legend entries = {{\DSA{}, 3 obj.}, {\DSA{}, 2 obj.}, {\Lab{}, 3 obj.}, {\Lab{}, 2 obj.}},
 legend style = { at = {(0.05,0.95)}, anchor = north west}, legend cell align = left, ymax = 3000]
\pgfplotstableread{plotdata/dsa_labeling_3obj.csv}
\datatable
\addplot[color=red, mark=o,style=densely dashed, mark options=solid] table[y = DSA_10_3obj] from \datatable ;
\addplot[mark=+, color=red,style=densely dashed, mark options=solid] table[y = DSA_10] from \datatable ;
\addplot[color=black, mark=o] table[y = Labeling_10_3obj] from \datatable ;
\addplot[mark=+, color=black] table[y = Labeling_10] from \datatable ;
\end{axis}
\end{tikzpicture}
}
\caption{Running time of \DSA{} and \Lab{} for an instance with three objectives and an instance with two objectives.}\label{fig_compare_DSA_lab_3obj}
\end{figure}
The running time of both algorithms increases by including the additional objective. The relative difference between the running time of the instance with two objectives and the instance with three objectives increases with $\Gamma_i$ for \Lab{}, whereas it decreases for \DSA.
\subsubsection{Correlated objective functions}\label{subsection_exp_corr}
We additionally generate an instance with two strongly correlated objective functions: We use the travel time as one objective and generate a second travel time objective by multiplying the nominal times and the interval lengths each by a random factor between $0.9$~and~$1.1$. 

\begin{figure}
\centering
\begin{tikzpicture}
\begin{axis}
[width=0.5\textwidth,xlabel = {$\Gamma_1 (= \Gamma_2)$}, ylabel = time in seconds, legend entries = {\DSA{}, \Lab{}}, legend style = { at = {(0.95,0.05)}, anchor =south east}, legend cell align = left, ymax = 4]
\pgfplotstableread{plotdata/dsa_lab_corr.csv}
\datatable
\addplot[color=red, mark=o] table[y = DSA] from \datatable ;
\addplot[mark=+, color=black] table[y = Lab] from \datatable ;
\end{axis}
\end{tikzpicture}
\caption{Running time of \DSA{} and \Lab{} for an instance with two strongly correlated objective functions.
}\label{fig_corr}
\end{figure}
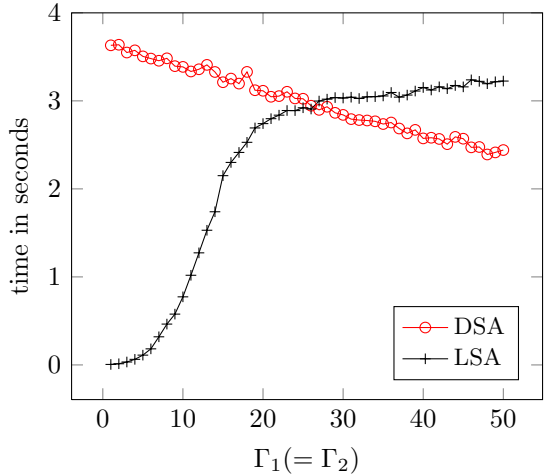
Both algorithms benefit a lot from the correlation, all running times are now less than four seconds, as shown in Figure~\ref{fig_corr}. In comparison, \Lab{} benefits more from correlated objective function values: The values of $\Gamma_i$, for which it is still faster than \DSA, are much higher on this instance than on the original hazardous material transportation instance considered in Section~\ref{subsection_exp1}. For small values of $\Gamma_i$ it is much faster than \DSA{}.
\subsubsection{Evaluation of the improvement obtained by solution checking }\label{subsection_exp_solcheck} 
To evaluate the obtained improvement by using solution checking in \DSA{}, we use Algorithm~\ref{alg_Bert_Sim_alternative_step3_mo2} as Step~\ref{alg_step_solve} of Algorithm~\ref{alg_Bert_Sim_general}.  We compare the running time of the version containing solution checking to the running time of the version without this enhancement, and, additionally, count the solved subproblems (Figure~\ref{fig_DSA_check}). Where fewer subproblems have been solved because of the enhancement, the running times differ significantly, for all other instances they are nearly equal. Hence, the check itself does not slow down the algorithm significantly in comparison to the acceleration that we obtain when subproblems can be skipped. We conclude that it is worth using the enhancement, but as $\Gamma_i$ increases solution checking becomes less effective.\\
Note that, since Lemma~\ref{lemma_alg_Bertsim_faster_2_general_a} allows to exclude even more subproblems than excluded in Algorithm~\ref{alg_Bert_Sim_alternative_step3_mo2}, further speed-ups may be achieved by implementing a more sophisticated solution checking. However, already when using Algorithm~\ref{alg_Bert_Sim_alternative_step3_mo2}, the benefit of solution checking is clearly visible.  

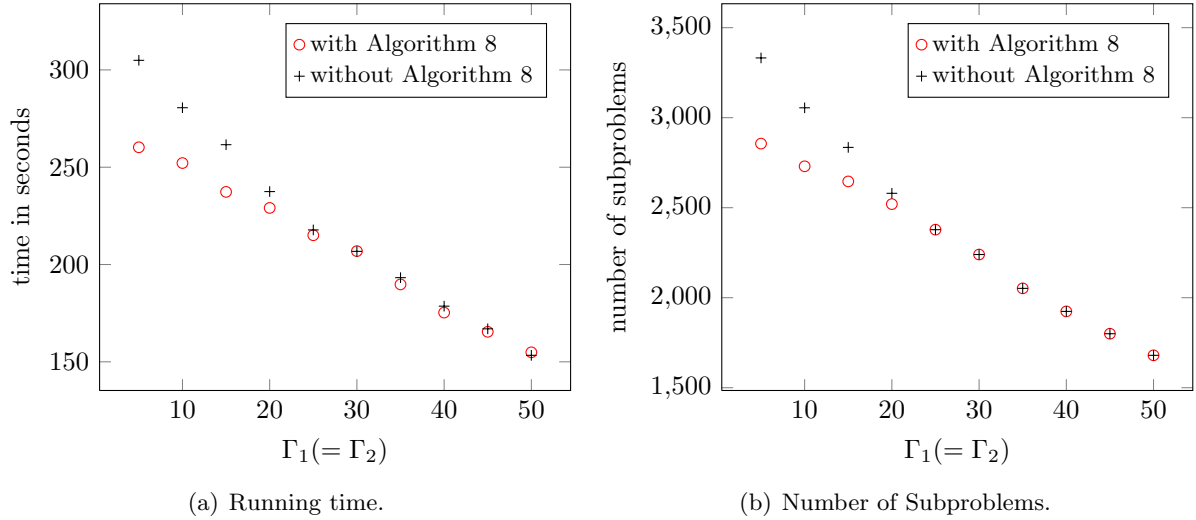
\begin{figure}
\subfigure[Running time.]{
\begin{tikzpicture}
\begin{axis}
[width=0.5\textwidth,xlabel = {$\Gamma_1 (= \Gamma_2)$}, ylabel = time in seconds, legend entries = {with Algorithm~\ref{alg_Bert_Sim_alternative_step3_mo2}, without Algorithm~\ref{alg_Bert_Sim_alternative_step3_mo2}},
only marks, legend style = { at = {(0.95,0.95)}},legend cell align = left, ymax = 334]
\pgfplotstableread{plotdata/dsa_subpr_time_50.csv}
\datatable
\addplot[color=red, mark=o] table[y = mc50] from \datatable ;
\addplot[mark=+, color=black] table[y = oc50] from \datatable ;
\end{axis}
\end{tikzpicture}
}
\subfigure[Number of Subproblems.]{
\begin{tikzpicture}
\begin{axis}
[width=0.5\textwidth,xlabel = {$\Gamma_1 (= \Gamma_2)$}, ylabel = number of subproblems, legend entries = {with Algorithm~\ref{alg_Bert_Sim_alternative_step3_mo2}, without Algorithm~\ref{alg_Bert_Sim_alternative_step3_mo2}},
only marks, legend style = { at = {(0.95,0.95)}},legend cell align = left, ymax = 3633]
\pgfplotstableread{plotdata/dsa_subpr_time_50.csv}
\datatable
\addplot[color=red, mark=o] table[y = sp50] from \datatable ;
\addplot[mark=+, color=black] table[y = sp50all] from \datatable ;
\end{axis}
\end{tikzpicture}
}
\caption{Running time and number of solved subproblems of \DSA{} with and without solution checking (Population uncertainty 50\%).}\label{fig_DSA_check}
\end{figure}

\subsubsection{Evaluation of \DSA{} for instances with objective independent element order}
To compare the performance of the objective independent \DSA{}  (\DSA{}-oi) from Section \ref{subsection_detsubproblems_objective_independen} to the general algorithm, we generate an instance with objective independent element order: Instead of generating interval lengths for the population objective we use the interval lengths of the travel time objective. Figure~\ref{fig_DSA_equaltime} shows that \DSA{}-oi has a much better performance than the general algorithm. The test, whether the instance is objective independent, only takes a small fraction of the running time (for our instances $1.4 \cdot 10^{-5}$ seconds). Therefore, it is reasonable to check each instance for objective independent element order before solving it with \DSA{}.
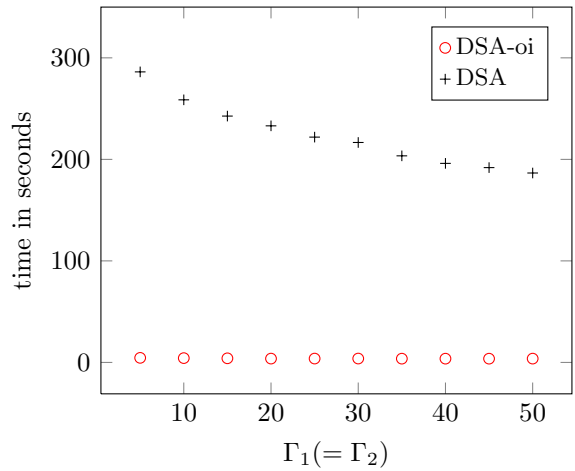
\begin{figure}
\centering
\begin{tikzpicture}
\begin{axis}
[width=0.5\textwidth,xlabel = {$\Gamma_1 (= \Gamma_2)$}, ylabel = time in seconds, legend entries = {\DSA{}-oi, \DSA{}},
only marks, legend style = { at = {(0.95,0.95)}},legend cell align = left, ymax = 350]
\pgfplotstableread{plotdata/dsa_equaltime.csv}
\datatable
\addplot[color=red, mark=o] table[y = me] from \datatable ;
\addplot[mark=+, color=black] table[y = oe] from \datatable ;
\end{axis}
\end{tikzpicture}

\caption{Comparison of \DSA{} and \DSA{}-oi for instances with objective independent element order.}\label{fig_DSA_equaltime}
\end{figure}

\section{Conclusion}

 In this paper we have developed two approaches to find minmax robust solutions for multi-objective combinatorial optimization problems with cardinality-constrained uncertainty. We have extended an algorithm from \cite{bertsimas2003robust} (\DSA{}) to multi-objective optimization, have suggested an enhancement and developed a special version for instances with objective independent element order. We have also introduced a second approach and used it to develop a label setting algorithm (\Lab{}) for the multi-objective uncertain shortest path problem.\\
 We have tested our algorithms on several instances of the multi-objective uncertain shortest path problem arising from hazardous material transportation. On most of the tested instances \DSA{} has a better performance, but \Lab{} is faster for small values of $\Gamma_i$. If the two objective functions are strongly correlated, which appears often in shortest path problems, where, e.g., the distance, travel time and fuel consumption are correlated, \Lab{} is competitive even for higher values of $\Gamma_i$.\\
 When implementing \DSA{} we recommend to use the proposed enhancements and to check whether the special version for instances with (partly) objective independent element order can be used. The checks do not take long in comparison to the total running time, and if their result is positive, the algorithm can be accelerated significantly.\\
 For further investigations other variants of multi-objective cardinality-constrained uncertainty are of interest. A second way to extend the single-objective concept is to require the edges whose costs differ from their minimal values to be the same for all objectives. In this case the uncertainties in the objectives are no longer independent of each other and using point-based or set-based minmax robust efficiency leads to different solution sets. An interesting variation of cardinality-constrained uncertainty is not to consider a bound on the cardinality, but on the sum of the deviation from their minimal values. \\
 Further research on robust multi-objective optimization includes other types of uncertainty, e.g., discrete scenario sets or polyhedral or ellipsoidal uncertainty. Also the case of decision uncertainty, in which the solution found can not be realized exactly, is of interest, see \cite{RegRob} for first results.\\
 The algorithms for the multi-objective cardinality-constrained uncertain shortest path problem presented in this paper can easily be extended to the \emph{multi-objective single-source shortest path problem}, where a complete set of efficient paths from a start node $s$ to all other nodes has to be found. Since, in the deterministic case, there exist algorithms (e.g. the algorithm of Martins \cite{martins1984multicriteria}) for which it can be shown that the running time is polynomial in the output size, it would be interesting to investigate whether this is the case for the uncertain problem, too.

\section*{Acknowledgments}
Lisa Thom was supported by DFG RTG 1703 ``Resource Efficiency in Interorganizational Networks''.

\bibliography{romupa}
\bibliographystyle{alpha}

\end{document}